\newcommand{\defi}[1]{\emph{#1}} % for defined terms
\newcommand{\cplusplus}{\texttt{C++}}
\newcommand{\C}{\mathbb{C}}
\newcommand{\CC}{\mathbb{C}}
\newcommand{\F}{\mathbb{F}}
\newcommand{\G}{\mathbb{G}}
\newcommand{\GG}{\mathbb{G}}
\newcommand{\Q}{\mathbb{Q}}
\newcommand{\QQ}{\mathbb{Q}}
\newcommand{\R}{\mathbb{R}}
\newcommand{\RR}{\mathbb{R}}
\newcommand{\Z}{\mathbb{Z}}
\newcommand{\ZZ}{\mathbb{Z}}
\newcommand{\Kbar}{{\overline{K}}}
\newcommand{\Fbar}{{\overline{\F}}}
\newcommand{\bb}{\mathbf{b}}
\newcommand{\vv}{\mathbf{v}}
\newcommand{\ww}{\mathbf{w}}
\newcommand{\pp}{\mathfrak{p}}
\newcommand{\cplxi}{\textrm{i}}
\newcommand\sangle[1][.5]{\mathbin{\ThisStyle{\vcenter{\hbox{%
  \scalebox{#1}{$\SavedStyle\angle$}}}}}%
}
\newcommand{\Angle}{\pmb{\sangle[1.5]}}
\newcommand{\Regge}{\mathfrak{R}}
\newcommand{\Sphere}{\Sigma}
\newcommand{\Tetrahedron}{\Delta}
\newcommand{\calF}{\mathcal{F}}
\newcommand{\calH}{\mathcal{H}}
\newcommand{\calL}{\mathcal{L}}
\newcommand{\calP}{\mathcal{P}}
\newcommand{\scriptA}{\mathscr{A}}
\newcommand{\scriptM}{\mathscr{M}}
\newcommand{\scriptR}{\mathscr{R}}
\DeclareMathOperator{\Aut}{Aut}
\DeclareMathOperator{\Spec}{Spec}
\DeclareMathOperator{\Stab}{Stab}
\DeclareMathOperator{\sym}{sym}
\newcommand{\tors}{{\operatorname{tors}}}
\newcommand{\GL}{\operatorname{GL}}
\newcommand{\M}{\operatorname{M}}
\newcommand{\directsum}{\oplus} % binary direct sum
\newcommand{\intersect}{\cap} % binary intersection
\newcommand{\Intersection}{\bigcap} % intersection of a collection
\newcommand{\isom}{\simeq}
\newcommand{\tensor}{\otimes} % binary tensor product
\newcommand{\To}{\longrightarrow}
\newcommand{\union}{\cup} % binary union
\newcommand{\Union}{\bigcup} % union of a collection
\newtheorem{theorem}{Theorem}[section]
\newtheorem{lemma}[theorem]{Lemma}
\newtheorem{corollary}[theorem]{Corollary}
\newtheorem{proposition}[theorem]{Proposition}
\theoremstyle{definition}
\newtheorem{definition}[theorem]{Definition}
\newtheorem{example}[theorem]{Example}
\newtheorem{algorithm}[theorem]{Algorithm}
\theoremstyle{remark}
\newtheorem{remark}[theorem]{Remark}
\g@addto@macro\bfseries{\boldmath} % This makes math in section titles bold.
\begin{document}

\title{Space vectors forming rational angles}
\subjclass[2020]{Primary 52B10; Secondary 11R18, 14Q25, 51M04}
\keywords{Tetrahedra, roots of unity, Regge symmetry, torsion closure, line configuration, icosidodecahedron, spherical code}

\author{Kiran S. Kedlaya}
\address{Department of Mathematics, University of California, San Diego,
  La Jolla, CA 92093, USA}
\email{kedlaya@ucsd.edu}
\urladdr{\url{https://kskedlaya.org/}}

\author{Alexander Kolpakov}
\address{\parbox{\linewidth}{Institut de Math\'ematiques, Universit\'e de Neuch\^atel, 2000 Neuch\^atel, Suisse/Switzerland \\
    Laboratory of combinatorial and geometric structures, Moscow Institute of Physics \\ and Technology, Dolgoprudny, Russia}}
\email{kolpakov.alexander@gmail.com}
\urladdr{\url{https://sashakolpakov.wordpress.com}}

\author{Bjorn Poonen}
\address{Department of Mathematics, Massachusetts Institute of Technology, Cambridge, MA 02139-4307, USA}
\email{poonen@math.mit.edu}
\urladdr{\url{http://math.mit.edu/~poonen/}}

\author{Michael Rubinstein}
\address{Pure Mathematics, University of Waterloo, Waterloo ON, N2L 3G1, Canada}
\email{mrubinst@uwaterloo.ca}
\urladdr{\url{http://www.math.uwaterloo.ca/~mrubinst/}}

\dedicatory{In memory of John H. Conway}

\thanks{K.S.K.\ was supported in part by National Science Foundation grant DMS-1802161 and the UCSD Warschawski Professorship. A.K.\ was supported in part by the Swiss National Science Foundation project PP00P2-170560 and by the Russian Federation Government (grant no. 075-15-2019-1926). B.P.\ was supported in part by National Science Foundation grant DMS-1601946 and Simons Foundation grants \#402472 (to Bjorn Poonen) and \#550033. M.R.\ was supported by an NSERC Discovery Grant.}

\date{November 28, 2020}

\begin{abstract}
  We classify all sets of nonzero vectors in $\R^3$ such that
  the angle formed by each pair is a rational multiple of $\pi$.
  The special case of four-element subsets
  lets us classify all tetrahedra whose dihedral angles
  are multiples of $\pi$, solving$\!$ a$\!$ 1976$\!$ problem$\!$ of$\!$ Conway$\!$ and$\!$ Jones:
  there are $2$ one-parameter families and $59$ sporadic tetrahedra,
  all but three of which are related to either the icosidodecahedron
  or the $B_3$ root lattice.
  The proof requires the solution in roots of unity
  of a $W(D_6)$-symmetric$\!$ polynomial$\!$ equation$\!$ with$\!$ $105\!$ monomials$\!$ (the$\!$ previous$\!$ record$\!$ was$\!$ $12\!$ monomials).
\end{abstract}

\maketitle

%****************************************************************************
\section{Introduction}\label{S:introduction}

\subsection{Rational-angle line configurations}

Call an angle \defi{rational} if its degree measure is rational,
or equivalently if its radian measure is in $\Q \pi$.
Our main theorem classifies all sets $S$ of nonzero vectors in $\R^3$
such that the angle formed by each pair is rational.

Scaling a nonzero vector $\vv$ does not affect whether
the angles it forms with other vectors are rational,
so it is natural to consider the lines $\R \vv$.
In this paper, \defi{line} means line in $\R^3$ through $\mathbf{0}$,
and \defi{plane} is defined similarly.
A \defi{rational-angle line configuration} is a set of lines
such that each pair forms a rational angle.
Call two configurations \defi{equivalent}
if there exists an orthogonal transformation mapping one to the other.

\begin{example}
  Let $L \subset P$ be a line and plane.
  The set of lines in $P$ forming a rational angle with $L$
  together with the line perpendicular to $P$
  is a rational-angle line configuration.
  Call it a \defi{perpendicular configuration}.
  See the first image in Figure~\ref{F:three}.
\end{example}

%%% Icosidodecahedron
\def\idsolid{

  \coordinate (p1) at (-0.50000,0.80902,-1.30902);
  \coordinate (p2) at (0.50000,0.80902,-1.30902);
  \coordinate (p3) at (0.00000,0.00000,-1.61803);
  \coordinate (p4) at (1.61803,0.00000,0.00000);
  \coordinate (p5) at (1.30902,-0.50000,-0.80902);
  \coordinate (p6) at (1.30902,0.50000,-0.80902);
  \coordinate (p7) at (0.50000,-0.80902,-1.30902);
  \coordinate (p8) at (0.80902,-1.30902,-0.50000);
  \coordinate (p9) at (-1.30902,0.50000,-0.80902);
  \coordinate (p10) at (-0.80902,1.30902,-0.50000);
  \coordinate (p11) at (0.50000,0.80902,1.30902);
  \coordinate (p12) at (1.30902,0.50000,0.80902);
  \coordinate (p13) at (0.80902,1.30902,0.50000);
  \coordinate (p14) at (-0.50000,-0.80902,-1.30902);
  \coordinate (p15) at (1.30902,-0.50000,0.80902);
  \coordinate (p16) at (0.00000,1.61803,0.00000);
  \coordinate (p17) at (-0.80902,1.30902,0.50000);
  \coordinate (p18) at (0.80902,1.30902,-0.50000);
  \coordinate (p19) at (-0.50000,0.80902,1.30902);
  \coordinate (p20) at (0.80902,-1.30902,0.50000);
  \coordinate (p21) at (-1.30902,-0.50000,-0.80902);

  \filldraw[fill=red, draw=black, thick]
  (p1) -- (p2) -- (p3) -- (p1)
  (p4) -- (p5) -- (p6) -- (p4)
  (p7) -- (p5) -- (p8) -- (p7)
  (p1) -- (p9) -- (p10) -- (p1)
  (p11) -- (p12) -- (p13) -- (p11)
  (p7) -- (p3) -- (p14) -- (p7)
  (p15) -- (p4) -- (p12) -- (p15)
  (p16) -- (p10) -- (p17) -- (p16)
  (p2) -- (p6) -- (p18) -- (p2)
  (p16) -- (p13) -- (p18) -- (p16);

  \filldraw[fill=yellow, draw=black, thick]
  (p19) -- (p11) -- (p13) -- (p16) -- (p17) -- (p19)
  (p20) -- (p8) -- (p5) -- (p4) -- (p15) -- (p20)
  (p21) -- (p9) -- (p1) -- (p3) -- (p14) -- (p21)
  (p1) -- (p10) -- (p16) -- (p18) -- (p2) -- (p1)
  (p7) -- (p3) -- (p2) -- (p6) -- (p5) -- (p7)
  (p6) -- (p18) -- (p13) -- (p12) -- (p4) -- (p6);

  \filldraw [black]
  (p1) circle [radius=2pt]
  (p2) circle [radius=2pt]
  (p3) circle [radius=2pt]
  (p4) circle [radius=2pt]
  (p5) circle [radius=2pt]
  (p6) circle [radius=2pt]
  (p7) circle [radius=2pt]
  (p8) circle [radius=2pt]
  (p9) circle [radius=2pt]
  (p10) circle [radius=2pt]
  (p11) circle [radius=2pt]
  (p12) circle [radius=2pt]
  (p13) circle [radius=2pt]
  (p14) circle [radius=2pt]
  (p15) circle [radius=2pt]
  (p16) circle [radius=2pt]
  (p17) circle [radius=2pt]
  (p18) circle [radius=2pt]
  (p19) circle [radius=2pt]
  (p20) circle [radius=2pt]
  (p21) circle [radius=2pt]
  ;
}

%%% B_3 root system
\def\b3{

  \filldraw[fill=green!20!white, draw=black, thick]
  (1,1,1) -- (1,-1,1) -- (-1,-1,1) -- (-1,1,1) -- (1,1,1);

  \filldraw[fill=blue!20!white, draw=black, thick]
  (1,-1,1) -- (1,-1,-1) -- (-1,-1,-1) -- (-1,-1,1) -- (1,-1,1);

  \filldraw[fill=red!20!white, draw=black, thick]
  (-1,1,1) -- (-1,-1,1) -- (-1,-1,-1) -- (-1,1,-1) -- (-1,1,1);

  \filldraw [violet]
  (0,0,1) circle [radius=2pt]
  (0,-1,0) circle [radius=2pt]
  (-1,0,0) circle [radius=2pt]
  ;

  \filldraw [black]
  (1,0,1) circle [radius=2pt]
  (-1,0,1) circle [radius=2pt]
  (0,1,1) circle [radius=2pt]
  (0,-1,1) circle [radius=2pt]
  (0,-1,-1) circle [radius=2pt]
  (1,-1,0) circle [radius=2pt]
  (-1,-1,0) circle [radius=2pt]
  (-1,1,0) circle [radius=2pt]
  (-1,0,-1) circle [radius=2pt]
  ;
}

\usetikzlibrary{perspective}

\begin{figure}[h]
  \phantom{mmm}
  \begin{tikzpicture}[scale = 1]
    \pgfmathsetmacro{\num}{17}
    \draw[red, very thick] (0,-1.25) -- (0,1.25);
    \foreach \n in {1, ..., \num} \draw ({-2*cos(\n*180/\num)*cos(15)},{-2*sin(\n*180/\num)*sin(15)}) --({2*cos(\n*180/\num)*cos(15)},{2*sin(\n*180/\num)*sin(15)});
    \draw[blue,thick] (-2,0) -- (2,0);
    \node[blue, left] at (-2,0) {$L$};
  \end{tikzpicture}
  \phantom{mmm}
  \begin{tikzpicture}[3d view = {-45}{35}, scale=0.8]
    \idsolid
  \end{tikzpicture}
  \phantom{mmmm}
  \begin{tikzpicture}[3d view , scale=1]
    \b3
  \end{tikzpicture}
  \phantom{mmmmmmm}
  \caption{A perpendicular configuration, an icosidodecahedron, and the $B_3$ root system.}
  \label{F:three}
\end{figure}
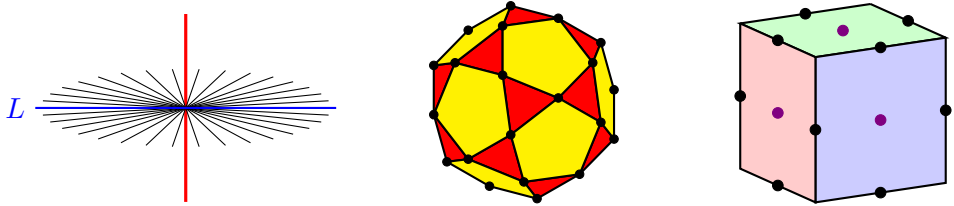

Any subset of a rational-angle line configuration is another,
so it suffices to classify \defi{maximal} rational-angle line configurations,
those not contained in a strictly larger one.
For $n<4$, describing the rational-angle configurations of $n$ lines is trivial
since there are no \emph{equations} that the angles between them must satisfy,
only the obvious \emph{inequalities}.

\begin{theorem}
  \label{T:main}
  The maximal rational-angle line configurations,
  up to equivalence,
  fall into finitely many families and sporadic examples as enumerated in
  Table \ref{Table:number of configurations}.
  In particular, each rational-angle line configuration not contained in a
  perpendicular configuration has at most $15$ lines.
\end{theorem}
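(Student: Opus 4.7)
The plan is to reduce the classification to a single polynomial equation in roots of unity, solve it, and then glue the resulting four-line pieces together combinatorially.

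\emph{Algebraic setup.} For each line $L_i$ choose a unit vector $\vv_i$ spanning it. The hypothesis says $2\,\vv_i\cdot\vv_j = \pm(\zeta_{ij} + \zeta_{ij}^{-1})$ for some root of unity $\zeta_{ij}$. The key rigidity is that any four vectors in $\R^3$ are linearly dependent, so the $4\times 4$ Gram matrix $(\vv_i\cdot\vv_j)$ of any four lines in the configuration has vanishing determinant. This Cayley--Menger-type relation, written as a polynomial in the six pairwise inner products and then pulled back along the substitution $c_{ij} = \tfrac{1}{2}(\zeta_{ij} + \zeta_{ij}^{-1})$, becomes a Laurent polynomial identity in six roots of unity with $105$ monomials, symmetric under the $W(D_6)$-action generated by the $S_4$-permutations of the four vectors together with the sign changes $\vv_i\mapsto-\vv_i$ (one of which is redundant).

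\emph{Solve in roots of unity.} The main obstacle is to find all solutions of this $W(D_6)$-symmetric equation in roots of unity. The strategy is to extend the Conway--Jones (1976) classification of vanishing sums of roots of unity: such a sum with few terms has bounded conductor and a controlled Galois-theoretic structure, reducing the problem in principle to a finite explicit search. The largest prior application of that method handled sums of at most $12$ terms; the jump to $105$ monomials requires substantial theoretical refinement, and the $W(D_6)$-symmetry is indispensable for pruning the search. The expected output is a finite list of possible angle-sextuples for a rational-angle configuration of four lines.

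\emph{Assemble into maximal configurations.} Any rational-angle configuration with $n\ge 4$ lines is determined by the angles of its $\binom{n}{4}$ four-line subconfigurations, each of which must appear on the list from the previous step. One enumerates maximal configurations by greedy extension from a seed quadruple. A single degenerate case must be treated separately: in a perpendicular configuration, three of any four lines lie in a common plane $P$, causing the four-line Gram identity to become redundant, so the rationality condition reduces to the rationality of the pairwise angles of the coplanar lines, which imposes no bound on their number. Outside this degenerate situation the four-line constraints are rigid enough to bound $n$ by $15$, and the resulting finite list matches the sporadic configurations in the table—in particular those built from the icosidodecahedron and the $B_3$-root system of Figure~\ref{F:three}. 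Verifying maximality and inequivalence of the members of the final list is a bookkeeping task but is nontrivial in practice.
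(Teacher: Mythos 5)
Your outline has the right shape---the Gram-determinant-vanishing relation in six roots of unity, extracting $4$-line configurations, then gluing and isolating the perpendicular configuration as the one unbounded case---and it correctly identifies that the degeneracy making the perpendicular configuration infinite comes from the Gram identity becoming redundant when three of the four lines are coplanar. But at the heart of the argument, where you say "the jump to $105$ monomials requires substantial theoretical refinement" and "the $W(D_6)$-symmetry is indispensable for pruning the search," you are hand-waving the single step that makes the whole proof possible, and the specific plan you sketch would not succeed.

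First, the $W(D_6)$-symmetry is not generated by the $S_4$ permutations and sign changes alone. That group is $S_4^{\pm}$, of order $2^7\cdot 3$ modulo the trivially-acting center, while $W(D_6)$ has order $2^5\cdot 6!$. The paper needs the \emph{Regge operators} (Section~\ref{section:Regge symmetry})---the exotic scissors-congruence symmetries of tetrahedra---to enlarge $S_4^\pm$ to $\Regge^\pm$, and even then the homomorphism $\Regge^\pm\to W(D_6)$ is far from surjective. The observation that $W(D_6)$ preserves the subvariety $Y$ comes from inspecting the monomial exponent vectors of the $105$-term relation and noticing that the coefficient-$(-2)$ orbit is literally the $D_6$ root system; it does not arise from geometric symmetries of the vector configuration.

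Second, and more fundamentally, "extending Conway--Jones" as you describe it is exactly what the paper rules out as infeasible: the Mann/Conway--Jones/Poonen--Rubinstein lineage of classifying vanishing sums of roots of unity scales worse than exponentially in the number of terms, and the prior record is $12$ terms, not anywhere near $105$. The key idea you are missing is to reduce the equation modulo $2$ in $\Z[\mu]/(2)$, following Dvornicich--Zannier. This collapses the $105$-monomial relation to a $12$-monomial mod-$2$ cosine relation (equation~\eqref{eq:mod 2 cosine condition}), which \emph{can} be classified by a mod-$2$ analogue of Conway--Jones (Theorem~\ref{T:sums of 12 mod 2}). That puts the solutions into finitely many parametrized families of small dimension, after which the paper substitutes back into the genuine equation and finishes each family with the torsion-closure algorithm of Section~\ref{sec:torsion closures} (plausible only because the ambient dimension has dropped to at most $3$). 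A separate branch, for the case with three coplanar lines, uses the genuine Conway--Jones result (Theorem~\ref{T:sums of 6 cosines}) on a $6$-term square-root of the Gram relation. The residual "small" cases are handled by a rigorously-certified numerical search over orders up to $840$. Without the mod-$2$ reduction, there is no way to make the equation-solving step terminate, so your proposal as written does not yield a proof.
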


\begin{table}
  {
    \begin{tabular}{c|l}
      $n$        & number of maximal rational-angle $n$-line configurations         \\ \hline
      $\aleph_0$ & 1                                                                \\
      15         & 1                                                                \\
      9          & 1                                                                \\
      8          & 5                                                                \\
      6          & 22, plus 5 one-parameter families                                \\
      5          & 29, plus 2 one-parameter families                                \\
      4          & 228, plus 10 one-parameter families and 2 two-parameter families \\
      3          & 1 three-parameter family
    \end{tabular}
    \bigskip
  }
  \caption{The number of maximal rational-angle line configurations with $n$ lines, up to equivalence.  For each $n$ not shown, there are none.  For the definition of family, see Definition~\ref{D:family of line configurations}.
    For a complete description of the families, see Section~\ref{sec:tables}.}
  \label{Table:number of configurations}
\end{table}

Here are geometric descriptions of the three largest configurations:

\begin{example}
  The $\aleph_0$-line configuration is the perpendicular configuration.
\end{example}

\begin{example}
  The $15$-line configuration consists of the lines connecting
  an icosidodecahedron's center to each of its $30$ vertices.
  (The vertices of an icosidodecahedron
  are the midpoints of the edges of a regular icosahedron,
  or equivalently, the midpoints of the edges of a regular dodecahedron;
  see the second image in Figure~\ref{F:three}.)
  The angles formed are all the multiples of $\pi/2$, $\pi/3$, $\pi/5$
  in $(0,\pi)$.
\end{example}

\begin{example}
  The $9$-line configuration consists of the lines in the directions
  of the $18$ roots of the $B_3$ root lattice
  (or equivalently, the $C_3$ root lattice, since the lengths are irrelevant).
  The angles formed are all the multiples of $\pi/3$ and $\pi/4$ in $(0,\pi)$.
  See the third image in Figure~\ref{F:three}.
\end{example}

Some additional examples are described in Section~\ref{sec:more lines}.

\begin{remark}
  \label{R:equivalent problems}
  The following problems are equivalent:
  \begin{enumerate}[\upshape (a)]
    \item classifying sets of nonzero vectors in $\R^3$ forming rational angles;
    \item classifying rational-angle line configurations;
    \item classifying \defi{rational-angle plane configurations}, i.e.,
          sets of planes such that each pair forms a rational angle
          (proof: take the perpendicular subspaces);
    \item classifying \defi{spherical codes} with distances in $\Q\pi$,
          i.e., subsets of the unit sphere such that
          the spherical distance between any two points lies in $\Q\pi$
          (proof: intersect each line with the sphere); and
    \item classifying convex polyhedra such that every two extended faces either form a rational angle or are parallel (for each rational-angle plane configuration $\mathscr{P}$ whose normal vectors span $\R^3$, choose closed half-spaces bounded by one or two planes parallel to each plane in $\mathscr{P}$, and consider their intersection, if bounded).
  \end{enumerate}
  Therefore Theorem~\ref{T:main} solves all of them.
\end{remark}

\begin{remark}
  There exist polyhedra with rational dihedral angles
  having two extended faces meeting at an irrational angle outside the polyhedron.
  These we do not classify in general.
\end{remark}

\subsection{Tetrahedra}

Call a tetrahedron \defi{rational} if all six of its dihedral angles
are rational.
Rational tetrahedra have Dehn invariant $0$,
or equivalently are scissors-congruent to a cube \cites{Dehn1901,Sydler1965},
and as such are candidates for
tetrahedra that can tile $\R^3$ \cite{Debrunner1980},
the study of which dates back to Aristotle \cite{Senechal1981}.
Conway and Jones in 1976 called attention to
the problem of classifying rational tetrahedra \cite{Conway-Jones1976}*{p.~239}.
We solve the problem in Theorem~\ref{T:tetrahedra} below.

A plane configuration is in \defi{general position}
if any three planes intersect in a point,
or equivalently, if in the corresponding line configuration,
no three lines are contained in any plane.
Rational tetrahedra up to similarity are in bijection
with rational-angle $4$-plane configurations in general position
up to equivalence:
given a tetrahedron, take the plane through $\mathbf{0}$ parallel to each face.
Because of this and Remark~\ref{R:equivalent problems},
Theorem~\ref{T:main} contains the classification of
rational tetrahedra.

Given a tetrahedron with vertices labeled $1,2,3,4$,
let $\alpha_{ij}$ be the dihedral angle along the edge joining vertices $i$ and $j$,
and list dihedral angles in the order
$(\alpha_{12},\alpha_{34},\alpha_{13},\alpha_{24},\alpha_{14},\alpha_{23})$
so as to pair each edge with the opposite edge.

\begin{theorem}
  \label{T:tetrahedra}
  The rational tetrahedra are those with dihedral angles
  \begin{gather*}
    \qquad\qquad\left( \pi/2, \; \pi/2, \; \pi - 2 x, \; \pi/3, \; x, \; x \right) \qquad \qquad \quad
    \textup{for } \pi/6 < x < \pi/2, \\
    \left( 5\pi/6 - x, \; \pi/6 + x, \; 2\pi/3 - x, \; 2\pi/3 - x, \; x, \; x \right) \qquad \textup{for } \pi/6 < x \leq \pi/3,
  \end{gather*}
  and the $59$ sporadic tetrahedra listed in Table~\ref{Table:tetrahedra}.
  $($Here, $x \in \Q\pi$ is assumed.$)$
\end{theorem}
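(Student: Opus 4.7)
The plan is to deduce Theorem~\ref{T:tetrahedra} from Theorem~\ref{T:main}. As explained in the paragraph preceding the theorem, rational tetrahedra up to similarity are in bijection with rational-angle $4$-plane configurations in general position, up to equivalence; by Remark~\ref{R:equivalent problems}, these correspond (by taking perpendicular subspaces) to rational-angle $4$-line configurations in which no three lines are coplanar.

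First I would enumerate all such $4$-line configurations by running through the maximal configurations classified in Theorem~\ref{T:main}. For each entry of Table~\ref{Table:number of configurations} (the perpendicular family, the $15$-line icosidodecahedral configuration, the $9$-line $B_3$ configuration, the sporadic $8,6,5$-line configurations, and the sporadic and family $4$-line configurations), I would list the $4$-element subsets and retain only those in general position (no three lines coplanar). Next, for each such subset I would compute the six pairwise angles, convert them to a $6$-tuple $(\alpha_{12},\alpha_{34},\alpha_{13},\alpha_{24},\alpha_{14},\alpha_{23})$ of dihedral angles, and remove duplicates under the $S_4$ action relabeling vertices (which permutes the six edges while preserving the partition into three pairs of opposite edges). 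I would also discard any tuples that fail to come from an actual tetrahedron --- equivalently, for which the Gram matrix of outward face normals fails to have signature $(3,0)$.

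The two continuous families in Theorem~\ref{T:tetrahedra} should emerge from $4$-line subsets of the perpendicular configuration (which naturally produces the family with two right dihedral angles) together with certain of the parametrized $4$-line families in Table~\ref{Table:number of configurations}. The $59$ sporadic tetrahedra should come principally from general-position $4$-subsets of the icosidodecahedral and $B_3$ configurations (accounting for the statement in the abstract that all but three are related to one of these), together with contributions from the sporadic $4$-, $5$-, $6$-, and $8$-line configurations. The main obstacle lies not in this extraction step, which is a careful but essentially mechanical combinatorial exercise, but in Theorem~\ref{T:main} itself, whose proof --- as signaled by the abstract --- requires solving a $W(D_6)$-symmetric vanishing sum of roots of unity with $105$ monomials, well beyond the scope of previously tractable enumerations of this type.
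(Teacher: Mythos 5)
Your high-level plan --- extract the general-position $4$-line configurations from the classification, convert to dihedral angles, and filter for actual tetrahedra --- is sound, and matches the spirit of the paper. But there is a concrete error in where you expect the first continuous family to come from. You attribute the family $(\pi/2,\pi/2,\pi-2x,\pi/3,x,x)$ to $4$-line subsets of the perpendicular configuration, but a perpendicular configuration consists of some lines in a plane $P$ together with (at most) the single line perpendicular to $P$; any $4$-element subset therefore contains at least three lines lying in $P$ and so is never in general position. Perpendicular configurations contribute \emph{no} tetrahedra at all. In the paper, both infinite families arise from the one-parameter $4$-line family \eqref{eq:one-parameter families2} of Lemma~\ref{L:4 lines classification general position} (realized as $4$-line subconfigurations of the $6$-line families in Examples~\ref{exa:maximal parametric 6-line configurations1} and~\ref{exa:maximal parametric 6-line configurations2}), not from the perpendicular family.

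A secondary, more methodological point: the paper does not deduce Theorem~\ref{T:tetrahedra} from Theorem~\ref{T:main}. It proves it directly from the intermediate results --- the general-position classification in Lemma~\ref{L:4 lines classification general position}, the orbit analysis in Lemma~\ref{L:expand orbits}, and the finite check in Proposition~\ref{P:low order solutions} --- specifically \emph{not} needing Lemma~\ref{L:4 lines classification not in general position}, which handles the configurations with three coplanar lines. Deducing it from Theorem~\ref{T:main} as you propose would work in principle (enumerate $4$-subsets of all maximal configurations of all sizes, filter for general position, convert via $\alpha_{ij} = \pi - \theta_{kl}$, reduce by $S_4$, and test condition~(iv) for being outward normals of a tetrahedron), but it is a more roundabout route and you would need to handle the parameter ranges with care (the paper observes that the admissible interval for each one-parameter family is open, so a single interior sample point plus sign-continuity suffices). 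With the perpendicular-configuration misattribution corrected, your plan would produce the same list.
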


\begin{remark}
  The first family in Theorem~\ref{T:tetrahedra}
  was discovered in 1895 \cite{Hill1895}*{Art.~4};
  see also \cite{Hadwiger1951} for a generalization to higher dimension
  and \cite{Maehara-Martini2018}*{\S2} for an elegant calculation of its angles.
  The second family in Theorem~\ref{T:tetrahedra} appears to be new.

  Of the $59$ sporadic rational tetrahedra, $15$
  (the tetrahedra $H_2(\pi/4)$, $T_0$--$T_7$, $T_{13}$, $T_{16}$--$T_{18}$, $T_{21}$, $T_{23}$ in \cite{Boltianskii1978}*{pp.~170--173})
  were discovered between 1895 and 1974
  \citelist{ \cite{Hill1895} \cite{Coxeter1948}*{p.~192} \cite{Sydler1956} \cite{Goldberg1958} \cite{Lenhard1962} \cite{Goldberg1974new} },
  and the other $44$ appear to be new.
\end{remark}

\begin{remark}
  We can ``explain'' almost all of the sporadic rational tetrahedra:
  Under the action of the Regge group $\Regge$
  (see Section~\ref{section:Regge symmetry}),
  $56$ of the $59$ are equivalent to a tetrahedron coming from a 4-line subconfiguration
  of the 15- or 9-line configuration.
  The remaining three are in the $\Regge$-orbit of the tetrahedron with dihedral angles
  $ \left(\pi/7, \; 3\pi/7, \;  \pi/3, \; \pi/3, \;  4\pi/7, \; 4\pi/7 \right)$.
\end{remark}

\subsection{Strategy of proof}

Geometry reduces the problem of determining
rational-angle $4$-line configurations
to solving a polynomial equation whose variables
are constrained to lie in the set $\mu$ of all roots of unity.
There are two known methods for solving equations in roots of unity;
one is practical for equations in up to $12$ monomials,
and the other is practical for equations in up to $3$ variables, roughly.
The complexity of each algorithm grows faster than exponentially.

What distinguishes our equation is that it has $105$ monomials in $6$ variables!
To solve it, we need the key idea, 
never before used to solve equations in roots of unity in characteristic~$0$, 
of building upon work of Dvornicich and Zannier \cite{Dvornicich-Zannier2002}
by working first 
in the quotient $\Z[\mu]/(2)$ of the subring $\Z[\mu] \subset \C$;
this makes the problem barely doable:
\begin{enumerate}[\upshape 1.]
  \item Reducing modulo $2$ yields a polynomial equation
        in $\Z[\mu]/(2)$ with only $12$ monomials!
  \item We adapt the first method above to
        parametrize all solutions in $\mu$ to such equations in $\Z[\mu]/(2)$.
        This restricts the possible $6$-tuples to lie in
        finitely many families, each parametrized by at most $3$ variables.
  \item Substituting each parametrization back into the original equation
        yields a polynomial equation (no longer mod $2$) in at most $3$ variables.
  \item We solve each of these equations using the second method above.
\end{enumerate}
Actually, we do not fully solve the equations as above,
but we do enough to constrain the roots of unity in sporadic solutions
to be of certain orders up to $840$;
% i.e., angles are multiples of $\pi/420$
then a large numerical computation, followed by an algebraic certification of results,
handles these ``small'' cases.
This yields a description of all $4$-line configurations,
in terms of 84696 parametrized families and sporadic examples of angle matrices
recording the pairwise angles between vectors along the lines.
These include the configurations corresponding to the tetrahedra in Theorem~\ref{T:tetrahedra}
but also many others in which at least three of the lines lie in a plane.
Finally, the $n$-line configurations for $n=5,6,\ldots,16$ in turn are determined
by finding all $n \times n$ matrices for which each $4 \times 4$ principal submatrix
belongs to one of the 84696 families; we employ an ``early abort'' strategy to
avoid having to analyze $84696^{\binom{16}{4}}$ cases.
The code for the various computations, written in
\cplusplus, Magma \cite{Magma}, SageMath \cite{SageMath}, and Singular \cite{Singular},
is available at \url{https://github.com/kedlaya/tetrahedra/}.

\begin{remark}
  Dvornicich, Veneziano, and Zannier \cite{Dvornicich-Veneziano-Zannier-preprint}
  study the rational angles
  formed by vectors in a lattice in $\R^2$.
  This leads to a problem of a different type,
  involving up to three variables constrained to be roots of unity,
  but also some variables constrained to be integers.
  Their analysis requires the determination of the rational points
  on some curves of genus $>1$.
\end{remark}

%****************************************************************************
\section{Realizability of angle matrices}

\begin{definition}
  If $A = (a_{ij}) \in \M_n(\R)$
  and $I \subset \{1,\ldots,n\}$ with $|I|=m$,
  then $(a_{ij})_{i,j \in I} \in \M_m(\R)$
  is called an $m \times m$ \defi{principal submatrix} of $A$.
  Its determinant is called a \defi{principal minor} of $A$.

  Given nonzero $\vv,\ww \in \R^d$, let $\angle \vv \ww \in [0,\pi]$
  be the radian measure of the angle they form.
  Let $\Sphere^{d-1}$ be the unit sphere in $\R^d$; its elements are unit vectors.
  Let $\M_n(\R)^{\sym}_0$ be the set of symmetric $n \times n$ matrices
  with diagonal entries equal to $0$.
  Call $\Theta \in \M_n(\R)$ \defi{realizable in $\R^d$}
  if it is in the image of
  \begin{align*}
    (\Sphere^{d-1})^n    & \stackrel{\Angle}\To \M_n(\R)^{\sym}_0 \\
    (\vv_1,\ldots,\vv_n) & \longmapsto (\angle \vv_i \vv_j).
  \end{align*}
\end{definition}

\begin{proposition}
  \label{P:realizability of angle matrices}
  Suppose that $\Theta = (\theta_{ij}) \in \M_n(\R)^{\sym}_0$
  has entries in $[0,\pi]$.
  Let $C = (\cos \theta_{ij})$.
  Then $\Theta$ is realizable in $\R^d$ if and only if
  \begin{enumerate}[\phantom{mm}\upshape 1.]
    \item for every $m \le d$,
          each $m \times m$ principal submatrix of $C$ is positive semidefinite, and
    \item each $(d+1) \times (d+1)$ principal minor of $C$ equals $0$.
  % The proof shows that one does not need to check the larger principal minors.
  \end{enumerate}
\end{proposition}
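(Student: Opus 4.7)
The plan is to exploit the classical interpretation of $C = (\cos\theta_{ij})$ as a candidate Gram matrix: $\Theta$ is realizable in $\R^d$ if and only if there exist unit vectors $v_1, \ldots, v_n \in \R^d$ with $v_i \cdot v_j = c_{ij}$, equivalently $C$ is positive semidefinite of rank at most $d$ (the unit diagonal being automatic from $\theta_{ii}=0$). The proposition amounts to rephrasing this PSD-with-rank-bound condition in terms of principal minors.

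For the forward direction, suppose $(v_1,\ldots,v_n) \in (\Sphere^{d-1})^n$ realizes $\Theta$. Writing $V = (v_1 \mid \cdots \mid v_n)$, we have $C = V^T V$, which is positive semidefinite of rank at most $d$. Each principal submatrix is itself the Gram matrix of the corresponding subset of the $v_i$, hence positive semidefinite; this is condition~1. Each $(d+1) \times (d+1)$ principal submatrix is the Gram matrix of $d+1$ vectors in $\R^d$, necessarily linearly dependent, so its determinant vanishes; this is condition~2.

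For the reverse direction, I plan to construct unit vectors realizing $C$ starting from a maximally positive definite principal block. Let $r$ be the largest integer such that some $r \times r$ principal submatrix $C_I$ is positive definite; condition~2 forces $r \le d$, since a positive definite $(d+1) \times (d+1)$ submatrix would have nonzero determinant. Realize $C_I$ as the Gram matrix of unit vectors $(u_i)_{i \in I}$ forming a basis of an $r$-dimensional subspace of $\R^d$. For each $j \notin I$, let $w_j$ be the unique vector in that subspace satisfying $u_i \cdot w_j = c_{ij}$ for all $i \in I$. The $(r+1) \times (r+1)$ principal submatrix on $I \cup \{j\}$ is positive semidefinite (from condition~1 when $r+1 \le d$, or from conditions~1 and~2 together when $r+1 = d+1$) but not positive definite by maximality of $r$, hence has determinant zero; a Schur complement computation along $C_I$ then yields $\|w_j\|^2 = 1$. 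Finally, for distinct $j, k \notin I$, the vanishing $(r+1) \times (r+1)$ principal minor on $(I \setminus \{i_0\}) \cup \{j, k\}$, applied for each $i_0 \in I$ and combined with the orthogonal decompositions of $w_j, w_k$ relative to the hyperplane $\operatorname{span}(u_i : i \in I \setminus \{i_0\})$, should pin down $c_{jk} = w_j \cdot w_k$. The resulting $n$ unit vectors $(u_i)_{i \in I} \cup (w_j)_{j \notin I} \subseteq \R^d$ then realize $\Theta$, since $\arccos$ inverts $\cos$ on $[0,\pi]$.

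The main obstacle will be the final identification $c_{jk} = w_j \cdot w_k$: each individual $(r+1) \times (r+1)$ determinant-zero equation is quadratic in $c_{jk}$, admitting $w_j \cdot w_k$ as one root and a spurious ``reflected'' root as the other, corresponding to flipping $w_j$ or $w_k$ across the hyperplane $\operatorname{span}(u_i : i \in I \setminus \{i_0\})$. Ruling out the spurious root requires a careful joint analysis across the different choices of $i_0 \in I$, and this is the technical heart of the reverse direction.
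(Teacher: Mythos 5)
Your forward direction is fine, and the plan — show $C$ is a Gram matrix of unit vectors in $\R^d$, i.e., positive semidefinite of rank at most $d$ — is the right one. But the ``technical heart'' you flag at the end cannot be completed, because the proposition as stated is actually \emph{false} once $n \ge d+2$. Take $n = d+2$ and $\theta_{ij} = \arccos(-1/d)$ for all $i \ne j$, so that $C = \frac{d+1}{d} I - \frac{1}{d} J$ with $J$ the all-ones matrix. For $m \le d$, each $m \times m$ principal submatrix of $C$ is $\frac{d+1}{d} I_m - \frac{1}{d} J_m$, with eigenvalues $\frac{d+1-m}{d} > 0$ and $\frac{d+1}{d}$, hence positive definite (condition~1). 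Each $(d+1) \times (d+1)$ principal submatrix has the eigenvalue $0$, hence determinant~$0$ (condition~2). Yet $C$ has the negative eigenvalue $\frac{d+1}{d} - \frac{d+2}{d} = -\frac{1}{d}$, so it is not positive semidefinite and $\Theta$ is not realizable in any Euclidean space. Tracing this through your construction: with $I$ a $d$-element index set, the two vectors $w_j$ and $w_k$ for the remaining two indices coincide (both equal the unique $w$ in the span with $u_i \cdot w = -1/d$), so $w_j \cdot w_k = 1$, while $c_{jk} = -1/d$. The given $c_{jk}$ really is the ``spurious'' root of every one of the $(d+1) \times (d+1)$ determinant conditions you propose to exploit, so no joint analysis over $i_0$ can rule it out. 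The unexamined step is your opening claim that conditions 1--2 ``rephrase PSD-with-rank-bound in terms of principal minors'': they control only principal minors of size at most $d+1$, and say nothing about the signs of the larger ones, which is exactly where the counterexample hides.

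For $n \le d+1$ the proposition is true, and there is a much shorter argument that bypasses your construction: conditions 1 and 2 together assert that every principal minor of $C$ (none exceeds size $d+1 \ge n$) is nonnegative, so $C$ is positive semidefinite by the nonnegative-principal-minors criterion; when $n = d+1$, condition~2 gives $\det C = 0$, so $\rank C \le d$; and any factorization $C = V^T V$ with $V$ a $d \times n$ real matrix produces unit vectors realizing $\Theta$, since $\arccos$ inverts $\cos$ on $[0,\pi]$. Presumably this is what the cited proof of B\"or\"oczky--Glazyrin's Lemma~2.1 delivers. The instance $n=4$, $d=3$ needed for Corollary~\ref{C:realizable locus} falls in this safe range, but Corollary~\ref{C:realizable} is invoked for $n > 4$, so the scope of the statement is worth flagging to the authors.
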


\begin{proof}
  See the proof of Lemma~2.1 of~\cite{Boroczky-Glazyrin2017}.
\end{proof}

\begin{corollary}
  \label{C:realizable}
  Let $\Theta \in \M_n(\R)$ for some $n \ge d+1$.
  Then $\Theta$ is realizable in $\R^d$
  if and only if every $(d+1) \times (d+1)$ principal submatrix of $\Theta$
  is realizable in $\R^d$.
\end{corollary}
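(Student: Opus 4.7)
The forward direction is immediate: if $\Theta$ is realized by unit vectors $\vv_1,\ldots,\vv_n \in \Sphere^{d-1}$, then any principal submatrix indexed by $I \subset \{1,\ldots,n\}$ is realized by the corresponding subfamily $(\vv_i)_{i \in I}$.

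For the converse, the plan is to verify the criterion of Proposition~\ref{P:realizability of angle matrices}. First I would observe that since every $(d+1)\times(d+1)$ principal submatrix of $\Theta$ is realizable (and $n \ge d+1$), every entry of $\Theta$ is covered by some such submatrix, forcing $\Theta \in \M_n(\R)^{\sym}_0$ with entries in $[0,\pi]$; hence the cosine matrix $C = (\cos \theta_{ij})$ is defined and symmetric with $1$'s on the diagonal.

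Next I would verify the two conditions of Proposition~\ref{P:realizability of angle matrices} for $\Theta$. For condition~(2), any $(d+1)\times(d+1)$ principal submatrix of $C$ comes from a $(d+1)\times(d+1)$ principal submatrix of $\Theta$ that is realizable in $\R^d$ by hypothesis; applying Proposition~\ref{P:realizability of angle matrices} to that submatrix (with $n$ replaced by $d+1$) shows that this principal minor equals $0$. For condition~(1), fix $m \le d$ and any index set $I \subset \{1,\ldots,n\}$ of size $m$; enlarge $I$ to a set $J$ of size $d+1$ (possible since $n \ge d+1$). The $(d+1)\times(d+1)$ principal submatrix of $\Theta$ indexed by $J$ is realizable by hypothesis, and applying Proposition~\ref{P:realizability of angle matrices} to it shows that all of its $m \times m$ principal submatrices of cosines are positive semidefinite; in particular, the one indexed by $I$ (which is also an $m\times m$ principal submatrix of $C$) is positive semidefinite.

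Having verified both conditions, Proposition~\ref{P:realizability of angle matrices} concludes that $\Theta$ is realizable in $\R^d$. I do not anticipate a serious obstacle: the whole argument is a straightforward bookkeeping exercise that leverages Proposition~\ref{P:realizability of angle matrices}'s criterion being \emph{local} (depending only on principal submatrices of bounded size), together with the combinatorial fact that each small principal submatrix of $\Theta$ is contained in some $(d+1)\times(d+1)$ principal submatrix when $n \ge d+1$.
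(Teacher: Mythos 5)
The paper states this corollary without proof, as an immediate consequence of Proposition~\ref{P:realizability of angle matrices}, and your argument is exactly the natural one: the Proposition's two conditions involve only principal submatrices of $C$ of size at most $d+1$, each of which (since $n \ge d+1$) is contained in some $(d+1)\times(d+1)$ principal submatrix of $\Theta$ whose assumed realizability supplies the needed condition. Your preliminary observation that realizability of the $(d+1)\times(d+1)$ blocks already forces $\Theta$ to be symmetric with zero diagonal and entries in $[0,\pi]$ --- so that the Proposition's hypotheses apply to $\Theta$ itself --- is a necessary step that you correctly include.
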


\begin{remark}
  \label{R:1 and 2 minors}
  The $1 \times 1$ and $2 \times 2$ principal submatrices of $C$
  in Proposition~\ref{P:realizability of angle matrices}
  are automatically positive semidefinite,
  since $1$ and $1-\cos^2 \theta_{ij}$ are nonnegative.
\end{remark}

\begin{remark}
  \label{R:triangle inequalities}
  The nonnegative real numbers $\alpha, \beta, \gamma$
  are sides of a possibly degenerate spherical triangle
  if and only if
  $\alpha \le \beta + \gamma$,
  $\; \beta \le \gamma + \alpha$,
  $\; \gamma \le \alpha + \beta$, and
  $\alpha + \beta + \gamma \le 2 \pi$.
  Therefore, such angle inequalities give the condition
  for a $3 \times 3$ principal submatrix of $C$
  as in Proposition~\ref{P:realizability of angle matrices}
  to be positive semidefinite.
\end{remark}

Let $\calP_n \subset \M_n(\R)^{\sym}_0$
be the polytope defined by the $4 \binom{n}{3}$ inequalities,
four as in Remark~\ref{R:triangle inequalities}
from each of the $3 \times 3$ principal submatrices of $\Theta$.
Let $\calH_n \subset \M_n(\R)^{\sym}_0$
be the analytic subvariety defined by the vanishing of
the determinants of the $4 \times 4$ Gram matrices $(\cos \theta_{ij})_{i,j \in I}$,
one for each $4$-element subset $I \subset \{1,\ldots,n\}$.

\begin{corollary}
  \label{C:realizable locus}
  The set of $\Theta \in \M_n(\R)$ realizable in $\R^3$ is $\calP_n \intersect \calH_n$.
\end{corollary}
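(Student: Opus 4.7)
The plan is to combine Corollary~\ref{C:realizable} with Proposition~\ref{P:realizability of angle matrices} (for $d=3$) to reduce realizability in $\R^3$ to a conjunction of conditions on $4\times 4$ principal submatrices, and then read off which condition is captured by $\calP_n$ and which by $\calH_n$. Throughout I assume $n\ge 4$, the small cases being either trivial or requiring a separate one-line check.

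For the direction ($\Leftarrow$), I would first verify that the inequalities defining $\calP_n$ already force each off-diagonal entry $\theta_{ij}$ to lie in $[0,\pi]$, which is needed before Proposition~\ref{P:realizability of angle matrices} can be applied. Combining two of the three triangle inequalities, say $\theta_{ik}\le\theta_{ij}+\theta_{jk}$ and $\theta_{jk}\le\theta_{ij}+\theta_{ik}$, yields $\theta_{ij}\ge 0$, while combining $\theta_{ij}\le\theta_{ik}+\theta_{jk}$ with $\theta_{ij}+\theta_{ik}+\theta_{jk}\le 2\pi$ yields $\theta_{ij}\le\pi$. Given $\Theta\in\calP_n\cap\calH_n$, I would then check the hypotheses of Proposition~\ref{P:realizability of angle matrices} for each $4\times 4$ principal submatrix $\Theta_I$ with $d=3$: the entries lie in $[0,\pi]$; the $1\times 1$ and $2\times 2$ principal submatrices of $C=(\cos\theta_{ij})$ are positive semidefinite by Remark~\ref{R:1 and 2 minors}; the $3\times 3$ principal submatrices are positive semidefinite by Remark~\ref{R:triangle inequalities} applied to the triangle inequalities contained in $\calP_n$; and the $4\times 4$ principal minor vanishes by membership in $\calH_n$. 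Thus each $\Theta_I$ is realizable in $\R^3$, and Corollary~\ref{C:realizable} promotes this to realizability of $\Theta$ itself.

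For ($\Rightarrow$), if $\Theta$ is realizable in $\R^3$, then trivially so is every $4\times 4$ principal submatrix $\Theta_I$ (restrict the realizing tuple to $I$). Proposition~\ref{P:realizability of angle matrices} then yields that every $3\times 3$ principal submatrix of $C$ is positive semidefinite and every $4\times 4$ principal minor of $C$ vanishes. Running Remark~\ref{R:triangle inequalities} in reverse translates the former into the triangle inequalities defining $\calP_n$, and the latter is the defining condition of $\calH_n$. Hence $\Theta\in\calP_n\cap\calH_n$.

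The only step that is not purely formal bookkeeping is the preliminary check that $\calP_n$ forces entries into $[0,\pi]$, since Proposition~\ref{P:realizability of angle matrices} presumes this; beyond that, the result is really just an index-juggling assembly of the previously listed remarks, the proposition, and Corollary~\ref{C:realizable}. I would expect the proof to be short (a few lines), with the main subtlety being quantifier management over all $4$-subsets $I\subset\{1,\ldots,n\}$.
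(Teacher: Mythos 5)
Your proof is correct, but it takes a slightly roundabout path compared to the paper. The paper's proof applies Proposition~\ref{P:realizability of angle matrices} with $d=3$ \emph{directly} to the full $n\times n$ matrix $\Theta$: condition~1 for $m=1,2$ is automatic by Remark~\ref{R:1 and 2 minors}, condition~1 for $m=3$ is exactly membership in $\calP_n$ by Remark~\ref{R:triangle inequalities}, and condition~2 (vanishing of all $4\times 4$ principal minors of $C$) is exactly membership in $\calH_n$. You instead first pass to $4\times 4$ principal submatrices via Corollary~\ref{C:realizable} and apply the proposition blockwise; this is logically fine (and Corollary~\ref{C:realizable} is itself a consequence of the proposition), but it is an unnecessary intermediate reduction, since the proposition already applies to the full $\Theta$. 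What your version adds that the paper leaves implicit is the check that the $\calP_n$ inequalities alone force each $\theta_{ij}$ into $[0,\pi]$ (via $|\theta_{ik}-\theta_{jk}|\le\theta_{ij}$ and $2\theta_{ij}\le 2\pi$); this is a genuine hypothesis of both Proposition~\ref{P:realizability of angle matrices} and Remark~\ref{R:triangle inequalities}, and it is worth spelling out as you did.
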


\begin{proof}
  Combine the $d=3$ case of
  Proposition~\ref{P:realizability of angle matrices}
  with Remarks \ref{R:1 and 2 minors} and~\ref{R:triangle inequalities}.
\end{proof}

\begin{definition}
  \label{D:family of angle matrices}
  \hfill
  \begin{enumerate}[\upshape (i)]
    \item
          A \defi{family of $\R^3$-realizable $n \times n$ rational-angle matrices}
          is a polytope $Q$ contained in $\calP_n \intersect \calH_n$
          such that
          \begin{itemize}
            \item the vertices of $Q$ are matrices with entries in $\Q\pi$.
            \item some element of $Q$ has no off-diagonal angles equal to $0$ or $\pi$; and
            \item $Q$ is not strictly contained in another polytope satisfying these   conditions.
          \end{itemize}
    \item
          The \defi{number of parameters} of the family is the dimension of $Q$.
    \item
          Call $Q$ \defi{maximal} if there is no family $Q'$ of
          $\R^3$-realizable $(n+1) \times (n+1)$ rational-angle matrices
          such that $Q$ equals the set of upper left principal submatrices of the matrices in $Q'$.
  \end{enumerate}
\end{definition}

\begin{definition}
  \label{D:family of line configurations}
  An \defi{$r$-parameter family of rational-angle line configurations}
  is the set of line configurations
  represented by all matrices with entries in $\Q\pi$
  belonging to a particular polytope $Q$ as described in Definition~\ref{D:family of angle matrices}.
\end{definition}

%****************************************************************************
\section{Subvarieties of algebraic tori}
\label{section:algebraic tori}

Identify $\M_4(\R)^{\sym}_0$ with $\R^6$ via
$\Theta \mapsto (\theta_{12},\theta_{34},\theta_{13},\theta_{24},\theta_{14},\theta_{23})$.
Let $\calP = \calP_4 \subset [0,\pi]^6$.
Let $\calH = \calH_4 \subset \R^6$; it is the analytic hypersurface
\begin{align}
  \label{eq:gram det 4x4}
   & \det \, (\cos \theta_{ij})_{1 \leq i,j \leq 4}
  =0.
\end{align}
Expanding \eqref{eq:gram det 4x4} and substituting
$\cos \theta = (e^{\cplxi \theta} + e^{-\cplxi \theta})/2$
yields the six-variable equation
\begin{equation}
  \label{E:105} % list monomials in increasing order of their lattice norm
  -20
  + 4 \sum z_{12}^{\pm 1} z_{13}^{\pm 1} z_{23}^{\pm 1}
  - 2 \sum z_{12}^{\pm 2}
  - 2 \sum z_{12}^{\pm 1} z_{13}^{\pm 1} z_{24}^{\pm 1} z_{34}^{\pm 1}
  +   \sum z_{12}^{\pm 2} z_{34}^{\pm 2}
  = 0
\end{equation}
in which each sum ranges over the $S_4$-orbit of each monomial
and over all possible choices of signs.
The number of monomials is
$1 + 4 \cdot 2^3 + 6 \cdot 2^1 + 3 \cdot 2^4 + 3 \cdot 2^2 = 105$.

Let $Z$ be the subvariety of the algebraic torus $\G_m^6$ over $\Q$
defined by \eqref{E:105}.
Let $\exp \colon \R^6 \to \G_m^6(\C)$
be the map applying $\theta \mapsto e^{\cplxi \theta}$ to each coordinate,
so $\calH = \exp^{-1}(Z(\C))$.
The monomials appearing in \eqref{E:105}
generate an index-$8$ subgroup $\Lambda$
of the group of all Laurent monomials in the $z_{ij}$;
let $T$ be the torus whose coordinate ring is their span.
Thus there is an isogeny $\tau \colon \G_m^6 \to T$
and a closed subvariety $Y \subset T$
such that $Z = \tau^{-1} Y$.
The kernel of $\tau$ is the elementary abelian group of order~$8$
consisting of $(z_{ij}) \in \{\pm 1\}^6$ such that
$z_{ij} z_{jk} z_{ik} = 1$ for all $i<j<k$.
To summarize, we have a cartesian diagram of spaces
\begin{equation}
  \label{E:spaces}
  \begin{split}
    \xymatrix@R=5pt@C=40pt{
    & \calH \ar[r] \ar@{^{(}->}[d] & Z(\C) \ar@{->>}[r] \ar@{^{(}->}[d] & Y(\C) \ar@{^{(}->}[d] \\
    (\Sphere^2)^4 \ar[r]^-{\Angle} & \M_4(\R)^{\sym}_0 \isom \R^6 \ar[r]^-{\exp} & \G_m^6(\C) \ar@{->>}[r]^{\tau} & T(\C)
    }
  \end{split}
\end{equation}

For an abelian group $G$, let $G_{\tors}$ be its torsion subgroup.
The following problems are equivalent:
\begin{enumerate}[\phantom{mmm}1.]
  \item Determine all rational-angle $4$-line configurations.
  \item Determine $\calP \intersect \calH \intersect (\Q \pi)^6$.
        (Here we use the $n=4$ case of Corollary~\ref{C:realizable locus}.)
  \item Determine $Z(\C) \intersect \mu^6$.  (We have $\theta \in \Q\pi$ if and only if $e^{\cplxi \theta} \in \mu$.  We dropped the inequalities defining $\calP$, but these are easy to impose at the end of the computation.)
  \item Determine $Y(\C) \intersect T(\C)_{\tors}$.
\end{enumerate}

To solve 1, we will solve 3, but we will also use that $Z = \tau^{-1} Y$
and that $Y$ has additional symmetry described in the next section.

%****************************************************************************
\section{Regge symmetry}
\label{section:Regge symmetry}

The signed permutation group $S_n^{\pm} \colonequals S_n \ltimes \{\pm 1\}^n$
acts on $(\Sphere^{d-1})^n$ by permuting and negating the $n$ vectors.
Similarly, $S_n$ acts on $\M_n(\R)^{\sym}_0$
by simultaneously permuting rows and columns,
and the $i$th generator of $\{\pm 1\}^n$ acts affine-linearly
by applying $x \mapsto \pi-x$
to each entry of the $i$th row and $i$th column except the $(i,i)$ entry.
The element $(-1,\ldots,-1)$ acts trivially on $\M_n(\R)^{\sym}_0$.

Now let $n=4$ and $d=3$.
The $S_4^\pm$-action on $\M_4(\R)^{\sym}_0$
is compatible with algebraic actions
of $S_4^\pm$ on $\G_m^6$ (not fixing $1$) and $T$ (fixing $1$)
such that the maps in the bottom row of \eqref{E:spaces}
are $S_4^\pm$-equivariant.

The $S_4^{\pm}$-action on $\M_4(\R)^{\sym}_0$ preserves $\calH$ and $\calP$.
Surprisingly, there is a larger group that preserves $\calH$ and $\calP$,
coming from exotic symmetries of the space of labeled tetrahedra,
as we will explain.

Fix an unordered partition of $\{1,2,3,4\}$ into pairs,
say $\{\{1,2\},\{3,4\}\}$, which we abbreviate as $12,\!34$.
Following \cite{Regge1959},
let $r=r_{12,34}$ be the linear operator on $\M_4(\R)^{\sym}_0 \isom \R^6$
sending $(x_{ij})$ to $(x_{ij}')$
where $x_{12}' \colonequals x_{12}$, $x_{34}' \colonequals x_{34}$,
and $x_{ij}' \colonequals s-x_{ij}'$ for all other $i<j$,
where $s \colonequals (x_{13} + x_{24} + x_{14} + x_{23})/2$.

Let $\Tetrahedron \subset \R^3$ be a \defi{labeled} tetrahedron;
labeled means that the vertices are numbered $1,2,3,4$.
% Rudenko uses ``marked'' to mean something different, so we avoid that word.
For each $i \ne j$, let $e_{ij}$ be the edge connecting vertices $i$ and $j$,
let $\ell_{ij}$ be the length of $e_{ij}$,
and let $\alpha_{ij}$ be the dihedral angle along $e_{ij}$.
Define $L_\Tetrahedron = (\ell_{ij})$ and $A_\Tetrahedron = (\alpha_{ij})$;
both are in $\M_4(\R)^{\sym}_0$.

\begin{theorem}[Ponzano and Regge]
  \label{T:Ponzano}
  For each labeled tetrahedron $\Tetrahedron$,
  there exists a labeled tetrahedron $\Tetrahedron'$, unique up to congruence,
  such that $L_{\Tetrahedron'} = r L_\Tetrahedron$ and $A_{\Tetrahedron'} = r A_\Tetrahedron$.
  Moreover, $\Tetrahedron$ and $\Tetrahedron'$ are scissors-congruent.
\end{theorem}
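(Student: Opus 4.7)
The plan is to construct $\Tetrahedron'$ from the Regge-transformed edge lengths, verify that its dihedral angles coincide with the Regge-transforms of those of $\Tetrahedron$, and then invoke Sydler's theorem to obtain scissors-congruence. Recall that six positive reals arise as edge lengths of a (possibly degenerate) labeled tetrahedron in $\R^3$ iff the four face triangle inequalities hold and the Cayley--Menger determinant $CM(\ell_{12},\ldots,\ell_{34})$ is non-negative; moreover $CM = 288\,V(\Tetrahedron)^2$. Writing $s_L = (\ell_{13}+\ell_{14}+\ell_{23}+\ell_{24})/2$, the operator $r$ fixes $\ell_{12},\ell_{34}$ and sends each other $\ell_{ij}$ to $s_L-\ell_{ij}$. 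I would first verify the polynomial identity $CM(L)=CM(rL)$ by direct symbolic computation in six variables, and check that the twelve face triangle inequalities for $rL$ follow from those for $L$ by a brief case analysis (each transformed inequality becomes one of the originals upon using $2s_L=\ell_{13}+\ell_{14}+\ell_{23}+\ell_{24}$). This produces a labeled tetrahedron $\Tetrahedron'$ with $L_{\Tetrahedron'}=rL_\Tetrahedron$ and $V(\Tetrahedron')=V(\Tetrahedron)$.

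Next I would verify the dihedral-angle claim. For any tetrahedron, $\cos\alpha_{ij}$ is a rational function of the six edge lengths via cofactors of the Cayley--Menger matrix. Setting $s_A=(\alpha_{13}+\alpha_{14}+\alpha_{23}+\alpha_{24})/2$ and writing $\alpha_{ij}'$ for the dihedral angles of $\Tetrahedron'$ determined from $rL$, the target identities are $\alpha_{12}'=\alpha_{12}$, $\alpha_{34}'=\alpha_{34}$, and $\alpha_{ij}'=s_A-\alpha_{ij}$ for the remaining four edges; these reduce to six polynomial identities in $\Q(\ell_{ij})$ to be checked by computer algebra. A useful consistency check is that differentiating the volume identity and applying Schl\"afli's formula $dV=-\tfrac{1}{2}\sum\ell_{ij}\,d\alpha_{ij}$ forces the claim infinitesimally. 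Uniqueness of $\Tetrahedron'$ is automatic: a labeled tetrahedron in $\R^3$ is determined by its edge-length matrix up to orientation-reversing isometry, hence up to congruence.

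For the scissors-congruence statement I would appeal to Sydler's theorem: two polyhedra in $\R^3$ are scissors-congruent iff their volumes and Dehn invariants (valued in $\R\otimes_\Z(\R/\pi\Z)$) agree. The volumes already agree. Using $\sum_{ij\notin\{12,34\}}\ell_{ij}=2s_L$ and $\sum_{ij\notin\{12,34\}}\alpha_{ij}=2s_A$, a short bilinear expansion gives
\begin{align*}
D(\Tetrahedron')-D(\Tetrahedron) &= \sum_{ij\notin\{12,34\}}\bigl[(s_L-\ell_{ij})\otimes(s_A-\alpha_{ij}) - \ell_{ij}\otimes\alpha_{ij}\bigr]\\
&= 4\,s_L\otimes s_A - 2\,s_L\otimes s_A - 2\,s_L\otimes s_A = 0
\end{align*}
in $\R\otimes\R$, hence also modulo $\pi$. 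The main obstacle is the ``double coincidence'' in the first two steps: one and the same affine involution $x\mapsto s-x$ must simultaneously preserve the Cayley--Menger polynomial and intertwine the nonlinear passage $L\mapsto A$ from edge lengths to dihedral angles. Both reduce to polynomial identities, but their geometric content is precisely the surprising 1959 discovery of Regge.
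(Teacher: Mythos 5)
The paper does not give a proof of this theorem; it cites Ponzano--Regge (1968, Appendices B and D) for a ``brute force calculation'' establishing the first claim, and Roberts (1999, Corollary 10) for the scissors congruence. Your plan is essentially a blueprint for reproducing both: the first part is the Ponzano--Regge brute-force verification, and the Sydler/Dehn-invariant argument is what Roberts observes. So the approach matches the one cited, and your Dehn-invariant cancellation and uniqueness arguments are correct. In that sense this is not a genuinely different route.

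That said, a few steps are stated too optimistically. First, you must also check that the entries of $rL_{\Tetrahedron}$ are positive, not just that the triangle inequalities hold; this does follow (e.g.\ $s_L - \ell_{13} = (\ell_{14}+\ell_{23}+\ell_{24}-\ell_{13})/2 > 0$ since $\ell_{13} < \ell_{14}+\ell_{34} \le \ell_{14}+\ell_{23}+\ell_{24}$) but should be said. Second, the claim that the dihedral-angle identities ``reduce to six polynomial identities in $\Q(\ell_{ij})$'' glosses over the main difficulty: $\cos\alpha_{ij}$ is not a rational function of the $\ell_{ij}$ (it has face areas, i.e.\ square roots of Heron polynomials, in the denominator), and the relations $\alpha'_{ij} = s_A - \alpha_{ij}$ involve sums of four angles, whose cosines expand into products of sines of dihedral angles, bringing in the volume $V = \sqrt{CM/288}$ as another square root. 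Organizing these radicals so that everything really becomes a provable polynomial identity, and then resolving the sign/branch ambiguity in passing from $\cos\alpha'_{ij} = \cos(s_A-\alpha_{ij})$ back to $\alpha'_{ij} = s_A-\alpha_{ij}$ (say by a continuity argument from a base point), is precisely the nontrivial content of the Ponzano--Regge computation, and your sketch leaves it entirely implicit. Finally, your ``consistency check'' via Schl\"afli is based on the wrong formula: in Euclidean space the Schl\"afli differential identity is $\sum \ell_{ij}\,d\alpha_{ij} = 0$, not $dV = -\tfrac12\sum \ell_{ij}\,d\alpha_{ij}$ (rescaling a Euclidean tetrahedron changes $V$ while leaving all $\alpha_{ij}$ fixed). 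Since you use this only as a sanity check, it does not undermine the proof, but as written it would not deliver the promised infinitesimal confirmation.
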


\begin{proof}
  The first statement was proved in \cite{Ponzano-Regge1968}*{Appendices B and~D}
  by a brute force calculation.
  Geometric proofs have recently been discovered
  \cites{Akopyan-Izmestiev2019,Rudenko-preprint},
  but they are not simple.
  The scissors congruence was first observed in \cite{Roberts1999}*{Corollary~10}.
\end{proof}

\begin{remark}
  The same theorem holds in spherical and hyperbolic geometry:
  see \cites{Mohanty2003,Taylor-Woodward2005,Akopyan-Izmestiev2019,Rudenko-preprint}.
\end{remark}

\begin{definition}
  Call the operator $r=r_{12,34}$ and its analogues $r_{13,24}$ and $r_{14,23}$
  \defi{Regge operators}.
  Together with $S_4$,
  they generate a subgroup $\Regge \subset \GL(\M_4(\R)^{\sym}_0) \isom \GL_6(\R)$;
  in fact, $r$ and $S_4$ already generate $\Regge$ since the other Regge operators
  are $S_4$-conjugates of $r$.
  The group $\Regge$ is isomorphic to $S_4 \times S_3$ \cite{Regge1959},
  but the isomorphism sends the original $S_4$
  to the graph of a surjection $S_4 \to S_3$,
  not a normal subgroup, let alone a direct factor.
  Let $\Regge^\pm$ be the subgroup of the affine linear group
  of $\M_4(\R)^{\sym}_0$ generated by the image of $S_4^\pm$
  and the Regge operators.
  Then $|\Regge| = 2^4 3^2$ and $|\Regge^\pm| = 2^7 3^2$.
\end{definition}

Identify the $z_{ij}$ with the standard basis of $\Z^6$,
but scale the Euclidean norm so that $\langle z_{ij},z_{ij} \rangle =1/2$.
Then $\Lambda$ is a lattice.
For each $c \in \Z$, let $\Lambda_c \subset \Lambda$ be the set of monomials
in~\eqref{E:105} with coefficient $c$.
Checking inner products shows that $\Lambda_{-2}$ is a copy of the $D_6$ root system!
Let $W(D_6)$ be the Weyl group, which we view as acting on the right on $\Lambda$,
so that it acts on the left on $T$.
For each $c \in \{-20,4,-2,1\}$, the set $\Lambda_c$ is a $W(D_6)$-orbit,
so $W(D_6)$ preserves $Y$.

The $S_4^\pm$-action on $T$ preserves the norm on $\Lambda$,
so it factors through $W(D_6)$.
A brief calculation shows that the action of $r$ on $\M_4(\C)^{\sym}_0$
corresponds to a linear action on $\Q^6 \isom \Lambda \tensor \Q$
that preserves $\Lambda_{-2}$ and hence is in $W(D_6)$,
so the homomorphism $S_4^\pm \to W(D_6)$
extends to $\Regge^{\pm} \to W(D_6)$.
Since \eqref{E:spaces} is cartesian, $\Regge^{\pm}$ preserves $\calH$.

In summary, we have a two-row cartesian diagram of spaces
and a sequence of homomorphisms of groups,
each acting on the spaces above it,
compatibly with respect to the homomorphisms:
\begin{equation}
  \label{E:actions}
  \begin{split}
    \xymatrix@R=5pt@C=40pt{
    & \calH \ar[r] \ar@{^{(}->}[d] & Z(\C) \ar@{->>}[r] \ar@{^{(}->}[d] & Y(\C) \ar@{^{(}->}[d] \\
    (\Sphere^2)^4 \ar[r]^-{\Angle} & \M_4(\R)^{\sym}_0 \isom \R^6 \ar[r]^-{\exp} & \G_m^6(\C) \ar@{->>}[r]^{\tau} & T(\C) \\
    S_4^\pm \ar[r] & \Regge^\pm \ar@{^{(}->}[rr] && W(D_6).
    }
  \end{split}
\end{equation}
Finally, $S_4^\pm$ preserves $\calP$,
and direct calculation shows that $r$ does too,
so $\Regge^\pm$ preserves $\calP$.

%****************************************************************************

\section{Cyclotomic relations}
\label{sec:cyclotomic relations}

Recall from the end of Section~\ref{section:algebraic tori}
that we need to find the torsion points on a hypersurface $Z$ in a torus $\G_m^6$;
this amounts to solving \eqref{E:105} in roots of unity.
Prior to our work, there were two general approaches to solving such problems:
\begin{itemize}
  \item Classify integer relations involving few roots of unity (this section).
  \item Use the Galois theory of cyclotomic fields and induction on the dimension (Section~\ref{sec:torsion closures}).
\end{itemize}
But, crucially, we develop also a new method in Section~\ref{sec:mod 2 relations}
involving cyclotomic relations modulo $2$.
We need all three methods
to find the torsion points on our particular variety $Z$;
see Section~\ref{sec:solutions to Gram equation}.

The classification of additive relations among roots of unity
grows out of work of Gordan~\cite{Gordan1877},
de~Bruijn~\cite{deBruijn1953}, R\'edei~\cites{Redei1959, Redei1960}, and Schoenberg~\cite{Schoenberg1964}.
Relations among $n$ roots of unity have been classified for
$n \leq 7$ by Mann~\cite{Mann1965},
$n\leq 8$ by W{\l}odarski~\cite{Wlodarski1969},
$n\leq 9$ by Conway and Jones~\cite{Conway-Jones1976}*{Theorem~6},
and $n\leq 12$ by Poonen and Rubinstein~\cite{Poonen-Rubinstein1998}*{Theorem~3.1}.
The last of these has the following consequence.

\begin{theorem} \label{T:sums of 6 cosines}
  Let $x_1,\dots,x_n \in \QQ$ be a sequence with $n \leq 6$ such that
  $\sum_{i=1}^n \cos (2 \pi x_i) = 0$,
  but no nonempty proper subsequence has the same property.
  Then $x_1,\dots,x_n$ can be obtained from
  a sequence in Table~\ref{table:sums of cosines}
  by some combination of permutation of terms, individual negation,
  individual addition of integers, and simultaneous addition of $1/2$.
\end{theorem}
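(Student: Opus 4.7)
My plan is to deduce Theorem~\ref{T:sums of 6 cosines} from \cite{Poonen-Rubinstein1998}*{Theorem~3.1}, which classifies minimal vanishing sums of at most $12$ roots of unity. Setting $\zeta_i \colonequals e^{2\pi \cplxi x_i}$ and using the identity $2\cos(2\pi x_i) = \zeta_i + \zeta_i^{-1}$, the hypothesis becomes a vanishing sum
\[
  \sum_{i=1}^n (\zeta_i + \zeta_i^{-1}) \;=\; 0
\]
of $2n\le 12$ roots of unity, stable under complex conjugation $c$ and organized into $n$ conjugate pairs $P_i \colonequals \{\zeta_i,\zeta_i^{-1}\}$. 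The four allowed operations on $(x_1,\dots,x_n)$ in the statement correspond respectively to permuting the $P_i$, flipping $\zeta_i \leftrightarrow \zeta_i^{-1}$ within a single $P_i$, leaving each $P_i$ unchanged (since $\zeta_i$ depends only on $x_i$ modulo $\ZZ$), and simultaneously replacing every $\zeta_i$ by $-\zeta_i$.

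First I would decompose the multiset $\bigsqcup_i P_i$ as a disjoint union of minimal vanishing subsums $M_1,\dots,M_k$ drawn from the Poonen--Rubinstein list. Conjugation $c$ permutes $\{M_1,\dots,M_k\}$, and each $c$-orbit $\mathcal{O}$ produces a $c$-stable union $\bigsqcup_{j\in\mathcal{O}} M_j$ that also sums to $0$. Any pair $P_i$ with $\zeta_i \ne \pm 1$ must lie entirely inside a single $c$-orbit, because $c$ swaps its two \emph{distinct} elements; hence the index set $\{i : \zeta_i \ne \pm 1\}$ is partitioned by the $c$-orbits. Minimality of the cosine sum forces this partition to be trivial, since any nontrivial piece would give a proper vanishing cosine subsum. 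So in the generic case there is exactly one $c$-orbit, and hence either $k=1$ (the entire sum is a self-conjugate Poonen--Rubinstein shape of size $2n$) or $k=2$ with $M_2 = c(M_1)$.

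Next, for each shape in the Poonen--Rubinstein list of size $\le 12$, I would read off the resulting cosine relations. When $k=1$, the $c$-action on the self-conjugate shape pairs up the $2n$ elements as $\{\zeta,\zeta^{-1}\}$, and these pairs must match the $P_i$. When $k=2$, each $P_i$ contributes one element to $M_1$ and one to $M_2 = c(M_1)$. Modding out by the four symmetries in the statement, each shape contributes a single entry to Table~\ref{table:sums of cosines}; the smaller cases $n\le 4$ are also recoverable directly from \cite{Conway-Jones1976}*{Theorem~6}.

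The main obstacle is the combinatorial bookkeeping. Several shapes at sizes $10$ and $12$ on the Poonen--Rubinstein list are parametrized by a free root of unity $\eta$, so for each such shape one must check that the self-conjugacy constraint (in the $k=1$ case) or the pair-splitting constraint (in the $k=2$ case) cuts $\eta$ down to a finite list. In addition, the degenerate situations where $\zeta_i \in \{\pm 1\}$ (so that $P_i$ becomes a double point which \emph{can} straddle two $c$-orbits without contradicting cosine minimality) fall outside the generic dichotomy above and must be enumerated separately; fortunately these are few and can be handled by hand. Once the shape-by-shape verification is complete, the resulting list coincides with Table~\ref{table:sums of cosines}.
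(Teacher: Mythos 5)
Your high-level plan is the right one, and in fact it is the paper's own approach: the paper proves Theorem~\ref{T:sums of 6 cosines} by citing Theorem~3.1, Lemma~4.1, and Lemma~4.2 of \cite{Poonen-Rubinstein1998} directly. You are essentially re-deriving Lemmas~4.1 and~4.2 of that reference, and most of your translation (the dictionary between the four operations on $(x_1,\dots,x_n)$ and operations on the conjugate pairs $P_i$, the observation that a pair $P_i$ with $\zeta_i\ne\pm1$ must sit inside a single $c$-orbit, the separate treatment of $\zeta_i\in\{\pm1\}$) is sound.

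However, there is a genuine gap in your claim that ``Conjugation $c$ permutes $\{M_1,\dots,M_k\}$.'' The decomposition of a vanishing sum of roots of unity into indecomposable (minimal) vanishing subsums is not unique, so if you fix an arbitrary such decomposition there is no reason for $c(M_j)$ to coincide with one of the $M_{j'}$: $c$ carries one valid decomposition to another, possibly different, one. (The paper explicitly flags this non-uniqueness when proving its own mod-$2$ analogue, Lemma~\ref{L:conjugation-stable mod 2 relations}.) What is true, and what you actually need, is the existence of \emph{some} decomposition that $c$ permutes. This is precisely the content of \cite{Poonen-Rubinstein1998}*{Lemma~4.1}, and it requires an argument: pick any indecomposable $T\subset S$; if $T=c(T)$ or $T\cap c(T)=\emptyset$, delete $T\cup c(T)$ and recurse; otherwise replace the pair $(T,c(T))$ by the symmetric difference $T\oplus c(T)$, which is a smaller conjugation-stable relation, and recurse on it and its complement in $S$. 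Without this inductive construction (or an appeal to the cited lemma), the step ``each $c$-orbit produces a $c$-stable union summing to $0$'' does not go through, and the rest of the deduction --- in particular the conclusion that minimality forces $k=1$ or $k=2$ with $M_2=c(M_1)$ --- is left unsupported.

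Once that lemma is in hand, the remainder of your plan (reading off the cosine shapes from the Poonen--Rubinstein list, noting that free parameters in the shapes survive, and checking the small degenerate cases by hand) matches what the cited Lemmas~4.1 and~4.2 accomplish, so the proposal would be complete after repairing this one step.
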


\begin{proof}
  Combine Theorem~3.1, Lemma~4.1, and Lemma~4.2 of \cite{Poonen-Rubinstein1998}.
\end{proof}

\renewcommand{\arraystretch}{1.25}
\begin{table}[ht]
  \begin{tabular}{c|c|c}
    Length & Type                   & Values                                                                                        \\
    \hline
    $1$    & n/a                    & $\frac{1}{4}$                                                                                 \\
    \hline
    $2^*$  & $2 R_2$                & $x + (0, h)$                                                                                  \\
    \hline
    $3$    & $(R_5 : R_3)$          & $\frac{1}{3}+h, \frac{1}{5}, \frac{2}{5}$                                                     \\
    \hline
    $3^*$  & $2R_3$                 & $x + (0, \frac{1}{3}, \frac{2}{3}) $                                                          \\
    \hline
    $4$    & $(R_5 : 3R_3)$         & $\frac{1}{3}, \frac{1}{3} + \frac{1}{5}, \frac{2}{3} + \frac{1}{5}, \frac{2}{5}+h$            \\
           &                        & $\frac{1}{3}, \frac{1}{3} + \frac{2}{5}, \frac{2}{3} + \frac{2}{5}, \frac{1}{5}+h $           \\
           & $(R_7 : R_3)$          & $\frac{1}{3}+h,\frac{1}{7}, \frac{2}{7}, \frac{3}{7}$                                         \\
    \hline
    $5$    & $(R_7 : 3R_3)$         & $\frac{1}{3},\frac{1}{3}+\frac{1}{7}, \frac{2}{3}+\frac{1}{7}, \frac{2}{7}+h, \frac{3}{7}+h$  \\
           &                        & $\frac{1}{3}, \frac{1}{3}+\frac{2}{7}, \frac{2}{3}+\frac{2}{7}, \frac{1}{7}+h, \frac{3}{7}+h$ \\
           &                        & $\frac{1}{3}, \frac{1}{3}+\frac{3}{7}, \frac{2}{3}+\frac{3}{7}, \frac{1}{7}+h, \frac{2}{7}+h$ \\
           & $ (R_7: R_5)$          & $\frac{1}{5}+h, \frac{2}{5}+h,
      \frac{1}{7}, \frac{2}{7}, \frac{3}{7}$                                                                                          \\
    \hline
    $5^*$  & $2R_5$                 & $x + (0,\frac{1}{5}, \frac{2}{5}, \frac{3}{5}, \frac{4}{5})$                                  \\
    \hline
    $6$    & $(R_7 : 5R_3)$         &
    $\frac{1}{3},
      \frac{1}{3} + \frac{1}{7},
      \frac{1}{3} + \frac{2}{7},
      \frac{2}{3} + \frac{1}{7},
      \frac{2}{3} + \frac{2}{7},
      \frac{3}{7} + h$
    \\
           &                        & $\frac{1}{3},
      \frac{1}{3} + \frac{1}{7},
      \frac{1}{3} + \frac{3}{7},
      \frac{2}{3} + \frac{1}{7},
      \frac{2}{3} + \frac{3}{7},
      \frac{2}{7}+h$                                                                                                                  \\
           &                        & $\frac{1}{3},
      \frac{1}{3} + \frac{2}{7},
      \frac{1}{3} + \frac{3}{7},
      \frac{2}{3} + \frac{2}{7},
      \frac{2}{3} + \frac{3}{7},
      \frac{1}{7}+h$                                                                                                                  \\
           & $ (R_7 : R_5, 2R_3)$   &
    $\frac{1}{5},
      \frac{2}{5},
      \frac{1}{3} + \frac{1}{7},
      \frac{2}{3} + \frac{1}{7},
      \frac{2}{7}+h, \frac{3}{7}+h$                                                                                                   \\
           &                        & $\frac{1}{5},
      \frac{2}{5},
      \frac{1}{3} + \frac{2}{7},
      \frac{2}{3} + \frac{2}{7},
      \frac{1}{7}+h, \frac{3}{7}+h$                                                                                                   \\
           &                        & $\frac{1}{5},
      \frac{2}{5},
      \frac{1}{3} + \frac{3}{7},
      \frac{2}{3} + \frac{3}{7},
      \frac{1}{7}+h, \frac{2}{7}+h$                                                                                                   \\
           & $(R_7 : (R_5 : 2R_3))$ &
    $\frac{2}{5}+h, \frac{1}{3} + \frac{1}{5}, \frac{2}{3} + \frac{1}{5}, \frac{1}{7}, \frac{2}{7}, \frac{3}{7}$                    \\
           &                        &
    $\frac{1}{5}+h, \frac{1}{3} + \frac{2}{5}, \frac{1}{3} + \frac{2}{5},
      \frac{1}{7}, \frac{2}{7}, \frac{3}{7}$                                                                                          \\
           & $(R_{11}: R_3)$        & $\frac{1}{3}+h, \frac{1}{11}, \frac{2}{11},\frac{3}{11},\frac{4}{11}, \frac{5}{11}$           \\
    \hline
    $6^*$  & $2(R_5 : R_3)$         & $x + (\frac{1}{3}+h, \frac{2}{3} +h, \frac{1}{5}, \frac{2}{5}, \frac{3}{5}, \frac{4}{5})$     \\
    \hline
  \end{tabular}
  \smallskip

  \caption{Indecomposable additive relations among at most 6 cosines of rational multiples of $2\pi$, up to transformations listed in Theorem~\ref{T:sums of 6 cosines}. The symbol $h$ stands for $\frac{1}{2}$.
    A length of $n^*$ indicates a shift by an auxiliary parameter $x \in \QQ$.
    The type is notated as per \cite{Poonen-Rubinstein1998}*{Table~3.1 and Table~3.2}.
    By Theorem~\ref{T:sums of 12 mod 2},
    this table (with one addition) also describes mod 2 cosine relations;
    in these, we may ignore shifts by $h$.}
  \label{table:sums of cosines}
\end{table}

\begin{remark}
  Building on these ideas,
  algorithms for finding the solutions of a polynomial equation in roots of unity
  have been described by
  Sarnak and Adams \cite{Sarnak-Adams1994};
  Filaseta, Granville, and Schinzel \cite{Filaseta-Granville-Schinzel2008};
  and Leroux \cite{Leroux2012}.
  But these algorithms scale exponentially
  in the number of variables and the number of monomials,
  so executing them on a polynomial with 105 monomials, as in \eqref{E:105}, is infeasible.
\end{remark}

\section{Mod 2 cyclotomic relations}
\label{sec:mod 2 relations}

Let $\overline{\mu}$ be the image of $\mu$ in $\Z[\mu]/(2)$,
so $\overline{\mu} \isom \mu/\{\pm 1\}$.
(We would lose too much information if instead we chose a prime $\pp$ above $2$
and worked in the residue field $\Z[\mu]/\pp \isom \Fbar_2$.)
For $n \ge 1$, let $\mu_n \colonequals \{z \in \mu : z^n=1\}$,
let $\overline{\mu}_n$ be the image of $\mu_n$ in $\Z[\mu]/(2)$,
and let $\zeta_n \colonequals e^{2\pi \cplxi/n} \in \mu_n$.

By a \emph{mod $2$ relation}, we mean a finite subset
$S \subset \overline{\mu}$ summing to $0$ in $\Z[\mu]/(2)$.
Call $S$ \defi{indecomposable}
if $S \ne \emptyset$ and $S$ is not
the disjoint union of two nonempty relations.
The \defi{weight} of $S$ is $w(S) \colonequals |S|$.
The \defi{level} $\ell(S)$ is the smallest $n \ge 1$ such that
$S \subset \overline{\mu}_n$.
Call relations $S$ and $S'$ \defi{equivalent}
if $S' = \lambda S$ for some $\lambda \in \overline{\mu}$.
Call $S$ \defi{minimal}
if $\ell(S) \le \ell(S')$ for all $S'$ equivalent to $S$.

The goal of this section is Theorem~\ref{T:sums of 12 mod 2},
the mod~2 analogue of Theorem~\ref{T:sums of 6 cosines}.
We follow the proof of \cite{Poonen-Rubinstein1998}*{Theorem~3.1}.
First we establish an analogue of \cite{Conway-Jones1976}*{Theorem~1}:

\begin{lemma}
  \label{L:minimal level}
  The level of any minimal indecomposable mod $2$ relation is odd and squarefree.
\end{lemma}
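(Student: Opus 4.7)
The plan is to adapt the proof of the classical squarefreeness result \cite{Conway-Jones1976}*{Theorem~1} to the mod 2 setting, additionally handling the ``odd'' assertion, which has no analogue over $\C$. The key technical ingredient is the following freeness statement: for every prime $p$ with $p^2 \mid n$ (including $p=2$ when $4 \mid n$), $\Z[\zeta_n]$ is free of rank $p$ over $\Z[\zeta_{n/p}]$ with basis $\{1, \zeta_n, \ldots, \zeta_n^{p-1}\}$, since $\zeta_n$ has minimal polynomial $X^p - \zeta_{n/p}$ over $\Q(\zeta_{n/p})$ of degree $p = [\Q(\zeta_n):\Q(\zeta_{n/p})]$. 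Reducing modulo $2$ preserves freeness, so $\Z[\zeta_n]/(2)$ is free of rank $p$ over $\Z[\zeta_{n/p}]/(2)$ with the same basis; in particular, any relation $\sum_{i=0}^{p-1} \overline{\zeta}_n^i \, u_i = 0$ with $u_i \in \Z[\zeta_{n/p}]/(2)$ forces each $u_i = 0$.

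For squarefreeness, suppose $p^2 \mid n$ for some odd prime $p$. Partition $S$ according to the $p$ cosets of $\overline{\mu}_{n/p}$ in $\overline{\mu}_n$: write $S = \bigsqcup_{i=0}^{p-1} S_i$ with $S_i \subset \overline{\zeta}_n^i \overline{\mu}_{n/p}$. The sum condition on $S$ becomes $\sum_{i=0}^{p-1} \overline{\zeta}_n^i u_i = 0$ with $u_i := \overline{\zeta}_n^{-i} \sum_{x \in S_i} x \in \Z[\zeta_{n/p}]/(2)$; freeness forces each $u_i = 0$, so every $S_i$ is itself a mod-$2$ sub-relation. Indecomposability of $S$ leaves at most one $S_j$ nonempty; then $\overline{\zeta}_n^{-j} S \subset \overline{\mu}_{n/p}$ is equivalent to $S$ and has level at most $n/p$, contradicting minimality of $\ell(S) = n$.

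For oddness, write $n = 2^e m$ with $m$ odd and $e \geq 1$. If $e = 1$, then $\zeta_{2m}^m = -1$ collapses to $1$ in $\overline{\mu}$, so $\overline{\zeta}_{2m}$ has order dividing $m$; hence $\overline{\mu}_{2m} = \overline{\mu}_m$, and $S \subset \overline{\mu}_n = \overline{\mu}_m$ has level at most $m < n$, a contradiction. If $e \geq 2$, the coset-partition argument applies with $p = 2$: $\overline{\mu}_{n/2}$ has index $2$ in $\overline{\mu}_n$ with coset representative $\overline{\zeta}_n$, freeness of rank $2$ forces the sums over the two cosets to vanish separately in $\Z[\zeta_{n/2}]/(2)$, and indecomposability together with minimality again yield a contradiction. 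The main obstacle I expect is verifying the freeness statement correctly in characteristic $2$, especially for $p = 2$ where $X^2 - \zeta_{n/2}$ becomes a perfect square; fortunately, freeness is a module-theoretic property preserved under base change regardless of how the minimal polynomial factors, so the basis $\{1, \zeta_n, \ldots, \zeta_n^{p-1}\}$ remains a basis after reduction modulo $2$.
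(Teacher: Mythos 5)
Your proof is correct and follows essentially the same strategy as the paper: exploit the freeness of $\Z[\zeta_n]/(2)$ over $\Z[\zeta_{n/p}]/(2)$ when $p^2 \mid n$ to split $S$ along cosets of $\overline{\mu}_{n/p}$, derive a contradiction from indecomposability and minimality, and then observe $\overline{\mu}_{2m} = \overline{\mu}_m$ for $m$ odd to kill the factor of $2$. The paper applies the coset argument uniformly to all primes $p$ with $p^2 \mid N$ (including $p=2$) before the oddness step, whereas you separate the $p=2$ case into the oddness discussion, but this is only a cosmetic reorganization, and your remark about freeness surviving base change despite $T^2 - \zeta_{n/2}$ becoming inseparable mod $2$ correctly resolves the only subtlety.
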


\begin{proof}
  Let $S$ be the relation.
  Let $N=\ell(S)$.
  Suppose that $p$ is a prime such that $p^2$ divides $N$.
  Since
  \[
    \ZZ[\zeta_N] = \ZZ[\zeta_{N/p}][T]/(T^p - \zeta_{N/p})
  \]
  and similarly after reduction mod 2, the intersection of $S$ with each coset of the group $\overline{\mu}_{N/p}$ is another relation.
  Since $S$ is indecomposable, it is contained in a single coset,
  so $S$ is equivalent to a relation of level dividing $N/p$, a contradiction.
  Thus $N$ is squarefree.
  If $N$ is even, then $\overline{\mu}_{N} = \overline{\mu}_{N/2}$,
  so $\ell(S) \ne N$.
\end{proof}

\begin{definition}
  For each odd prime $p$, let $R_p$ denote the set $\overline{\mu}_p$
  viewed as a weight~$p$ relation.
\end{definition}

Let $\directsum$ denote symmetric difference of sets.

\begin{definition}
  For relations
  $S, T_1,\dots,T_j$, let $(S: T_1,\dots,T_j)$ denote any relation of the form $S' \oplus T'_1 \oplus \cdots \oplus T'_j$,
  where $S', T'_1, \dots, T'_j$ are equivalent to $S,T_1,\dots,T_j$, respectively; $\# (S' \cap T'_i) = 1$ for all $i$;
  and $T'_i \cap T'_k = \emptyset$ whenever $i \ne k$.
  (The equivalence class of such a relation need not be determined by the equivalence classes of $S,T_1,\dots,T_j$.)
\end{definition}

The following is an analogue of \cite{Poonen-Rubinstein1998}*{Lemma~3.4},
and, by extension, of \cite{Conway-Jones1976}*{Theorem~5}. 
A direct analogue of the latter result, working modulo any prime,
can be found in \cite{Dvornicich-Zannier2002}.

\begin{lemma} \label{L:remove maximal prime from minimal relation}
  Let $S$ be a minimal indecomposable mod $2$ relation
  of level $p M$, where $p \nmid M$.
  If $S$ intersects some coset of $\overline{\mu}_M$
  in at most one element,
  then $S$ is of the form $(R_p: T_1,\dots,T_j)$,
  where $0 \le j<p$, each $T_i$ is nonempty with $\ell(T_i) \mid M$, and
  \begin{equation}
    \sum_{i=1}^j (w(T_i)-2) = w(S) - p.
  \end{equation}
\end{lemma}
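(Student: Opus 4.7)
The plan is to decompose $S$ by cosets of $\overline{\mu}_M$ in $\overline{\mu}_{pM}$ and exploit that $\Z[\mu_{pM}]/(2)$ is free over $\Z[\mu_M]/(2)$ with basis $1,\zeta_p,\ldots,\zeta_p^{p-2}$: this follows because $p \nmid M$ makes $\Phi_p$ the minimal polynomial of $\zeta_p$ over $\Z[\mu_M]$, and freeness is preserved under reduction mod~$2$. Write $S = \bigsqcup_{a=0}^{p-1} S_a$ with $S_a = S \cap \zeta_p^a \overline{\mu}_M$, put $T_a \colonequals \zeta_p^{-a} S_a \subset \overline{\mu}_M$, and let $\tau_a \colonequals \sum_{x \in T_a} x \in \Z[\mu_M]/(2)$. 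The relation $\sum_{x \in S} x \equiv 0 \pmod{2}$ becomes $\sum_{a=0}^{p-1} \zeta_p^a \tau_a \equiv 0$. Substituting $\zeta_p^{p-1} \equiv 1+\zeta_p+\cdots+\zeta_p^{p-2}$ and invoking the freeness forces $\tau_a = \tau_{p-1}$ for every $a$; call this common value $y \in \Z[\mu_M]/(2)$. Since $|T_{a_0}| \le 1$ by hypothesis, either $y=0$ or $y$ is a single element of $\overline{\mu}_M$.

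The case $y=0$ I would rule out first: then each $T_a$ is itself a mod-$2$ relation of level dividing $M$, making $S = \bigsqcup_a \zeta_p^a T_a$ a disjoint union of relations. Indecomposability then forces exactly one $T_a$ to be nonempty, so $S$ is equivalent to a relation of level dividing $M$, contradicting $\ell(S)=pM$. Hence $y$ is a single element of $\overline{\mu}_M$ and $T_{a_0}=\{y\}$.

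Next, set $T'_a \colonequals T_a \oplus \{y\}$. Since toggling membership of $y$ toggles the sum by $y$, each $T'_a$ is a mod-$2$ relation of level dividing $M$, and $T'_{a_0}=\emptyset$. A routine indicator-function computation gives
\[
  S \;=\; yR_p \;\oplus\; \bigsqcup_{a \in J} \zeta_p^a T'_a, \qquad J \colonequals \{a : T'_a \ne \emptyset\},
\]
with the union on the right disjoint (distinct cosets of $\overline{\mu}_M$) and $j \colonequals |J| \le p-1 < p$, since $a_0 \notin J$.

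The main obstacle is verifying the intersection condition $|yR_p \cap \zeta_p^a T'_a|=1$ for every $a \in J$, needed to recognize $S$ as $(R_p:T^{(1)},\ldots,T^{(j)})$ in the sense of the definition. Since $yR_p \cap \zeta_p^a \overline{\mu}_M = \{\zeta_p^a y\}$, this reduces to $y \in T'_a$, equivalently $y \notin T_a$. I would prove this by contradiction: if $y \in T_a$ for some $a \in J$, then $T'_a = T_a \setminus \{y\}$ is a nonempty relation with $\zeta_p^a T'_a \subset S_a \subset S$, so $S = \zeta_p^a T'_a \sqcup (S \setminus \zeta_p^a T'_a)$ is a disjoint decomposition into two nonempty mod-$2$ relations (the second has elements summing to $-\zeta_p^a y + \zeta_p^a y = 0$ and contains $\zeta_p^{a_0} y$ since $a_0 \ne a$), contradicting indecomposability of $S$. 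With this case excluded, we obtain $S = (R_p : T^{(1)}, \ldots, T^{(j)})$ with each $T^{(i)}$ a relation of level dividing $M$; the weight identity $\sum_i(w(T^{(i)})-2) = w(S) - p$ then follows by induction, since symmetric-differencing one $T^{(i)}$ with $yR_p$ changes the size by $w(T^{(i)}) - 2|yR_p \cap T^{(i)}| = w(T^{(i)}) - 2$.
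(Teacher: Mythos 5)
Your proof is correct and follows essentially the same strategy as the paper's: decompose $S$ by $\overline{\mu}_M$-cosets, use the freeness of $\Z[\mu_{pM}]/(2)$ over $\Z[\mu_M]/(2)$ to force the coset sums to be $\zeta_p$-rotations of a common value $y$, rule out $y=0$ by minimality, and then recognize each coset piece as a relation minus $y$. You fill in details the paper leaves implicit (the verification that $y \notin T_a$ via indecomposability, the disjoint-union-turned-$\oplus$ decomposition, and the weight count), but the underlying argument is the same.
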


\begin{proof}
  Reducing
  \[
    \ZZ[\zeta_{pM}] = \ZZ[\zeta_M][T]/(T^{p-1} + \cdots + T + 1),
  \]
  modulo $(2)$ shows that
  the intersections of $S$ with the cosets of $\overline{\mu}_M$ must have sums which are rotations of each other by powers of $\zeta_p$.
  No such intersection can be empty, or else each intersection would itself be a relation, equivalent to one of level dividing $M$, contradicting the hypotheses on $S$.  Therefore some intersection has one element.
  Then each of the other intersections is either itself a singleton set or the complement of a single root of unity in some mod $2$ relation.
\end{proof}

\begin{corollary} \label{C:no weight 4 relation}
  Each minimal indecomposable mod $2$ relation $S$ of weight at most $5$ is
  $R_3$ or $R_5$.
\end{corollary}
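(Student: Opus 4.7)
My approach combines three ingredients: the nonexistence of mod $2$ relations of weight $\leq 2$; Lemma~\ref{L:minimal level}, which forces the level of $S$ to be odd and squarefree; and Lemma~\ref{L:remove maximal prime from minimal relation} applied to the largest prime factor of $\ell(S)$.

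First I would observe that every nonempty mod $2$ relation has weight at least $3$. A single element $\lambda \in \overline{\mu}$ is a unit in $\Z[\mu]/(2)$ and hence nonzero, ruling out weight $1$. For weight $2$, the equation $\lambda + \nu = 0$ in $\Z[\mu]/(2)$ forces $\lambda = -\nu = \nu$ (since $-1 \equiv 1 \pmod 2$), contradicting that $\{\lambda,\nu\}$ is a two-element set.

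Now let $S$ be a minimal indecomposable mod $2$ relation with $3 \leq w(S) \leq 5$, and by Lemma~\ref{L:minimal level} write $\ell(S) = p_1 \cdots p_k$ with $p_1 < \cdots < p_k$ odd primes. If $k = 1$, so $\ell(S) = p$ is prime, I would apply Lemma~\ref{L:remove maximal prime from minimal relation} with $M = 1$: the coset hypothesis is vacuous, and any putative $T_i$ would be a nonempty relation contained in $\overline{\mu}_1 = \{1\}$, which is impossible. Hence $j = 0$ and $S = R_p$ of weight $p$, forcing $p \in \{3, 5\}$. Otherwise $k \geq 2$, and I take $p = p_k \geq 5$ and $M = \ell(S)/p$. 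Since $p$ and $M$ are coprime odd integers, $\mu_p \times \mu_M = \mu_{pM}$ injects into $\overline{\mu}_{pM}$, so the latter partitions into exactly $p \geq 5 \geq w(S)$ cosets of $\overline{\mu}_M$; pigeonhole then produces a coset meeting $S$ in at most one element, activating Lemma~\ref{L:remove maximal prime from minimal relation}.

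The lemma yields the identity
\begin{equation*}
\sum_{i=1}^{j}(w(T_i) - 2) \;=\; w(S) - p.
\end{equation*}
The left side is $\geq j \geq 0$, because each $T_i$ is a nonempty mod $2$ relation and so has weight $\geq 3$ by the first step; the right side is $\leq 5 - 5 = 0$. Both sides therefore vanish, forcing $j = 0$ and $S = R_p$ with $w(S) = p$, so $\ell(S) = p$, contradicting $k \geq 2$. Only the prime-level case survives, yielding $S \in \{R_3, R_5\}$. The only slightly delicate step is the coset count in the $k \geq 2$ case, which I do not expect to present any genuine obstacle.
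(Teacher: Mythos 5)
Your proof is correct and follows essentially the same route as the paper: apply Lemma~\ref{L:remove maximal prime from minimal relation} with $p$ the largest prime dividing $\ell(S)$, use $w(T_i) \ge 3$ to get $w(S) \ge p$, and conclude $S = R_p$ with $p \in \{3,5\}$ by comparing with $w(S) \le 5$. You are more explicit than the paper in verifying the coset hypothesis of that lemma (via pigeonhole) and in noting that nonempty mod~2 relations have weight at least~3, and you organize the cases by whether $\ell(S)$ is prime rather than by the size of $p$, but the underlying argument is identical.
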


\begin{proof}
  Let $p$ be the largest prime dividing $\ell(S)$.
  In Lemma~\ref{L:remove maximal prime from minimal relation},
  $w(T_i) \ge 3$ for each $i$, so $w(S) \ge p$,
  with equality if and only if $S=R_p$.
  If $p=3$, then $w(S) \le \ell(S)=3 = p$;
  if $p \ge 5$, then $w(S) \le 5 \le p$.
  Thus the equality holds, with $p=3$ or $p=5$.
\end{proof}

\begin{lemma} \label{L:not all intersections of weight 2}
  Let $S$ be a minimal indecomposable mod $2$ relation of level $pM$
  with $p \nmid M$.
  Then the intersections of $S$ with the cosets of $\overline{\mu}_M$
  cannot all have exactly two elements.
\end{lemma}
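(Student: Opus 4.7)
The plan is to combine the relation $\sum_{x \in S} x \equiv 0 \pmod 2$ with the structure of $\Z[\zeta_{pM}]$ as a free $\Z[\zeta_M]$-module (with basis $1, \zeta_p, \dots, \zeta_p^{p-2}$) to pin down what the two-element intersections $S_i \colonequals S \cap \zeta_p^i \overline{\mu}_M$ can look like, and then to exhibit a forbidden decomposition of $S$. Writing $S_i = \zeta_p^i T_i$ with $T_i = \{a_i,b_i\} \subset \overline{\mu}_M$, and $t_i \colonequals a_i + b_i \in \Z[\zeta_M]$, the global mod-$2$ vanishing $\sum_{i=0}^{p-1} \zeta_p^i t_i \equiv 0$ together with $\zeta_p^{p-1} \equiv 1 + \zeta_p + \cdots + \zeta_p^{p-2} \pmod 2$ forces $t_i \equiv t_{p-1} \pmod 2$ for every $i$, so all the pair sums $t_i$ agree in $\Z[\mu]/(2)$.

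The key step is to deduce from $t_i \equiv t_j$ that $T_i = T_j$ as subsets of $\overline{\mu}_M$. Rewriting, the multiset $\{a_i,b_i,a_j,b_j\}$ sums to $0$ in $\Z[\mu]/(2)$. If its four elements were distinct in $\overline{\mu}$, this would be a weight-$4$ mod-$2$ relation. I would argue no such relation exists: any mod-$2$ relation splits as a disjoint union of indecomposable pieces, each of weight at least $3$ (weight $1$ is impossible, and a weight-$2$ relation would require two equal elements of $\overline{\mu}$), and Corollary~\ref{C:no weight 4 relation} identifies the only indecomposables of weight $\leq 5$ as $R_3$ and $R_5$; thus weight $4$ is excluded. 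Hence two of the four elements must coincide, and because $a_i \neq b_i$ and $a_j \neq b_j$, the only possibility is $T_i = T_j$.

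With $T_i = T$ for a common $2$-element set $T = \{a,b\}$, we have $S = \bigsqcup_{i=0}^{p-1} \zeta_p^i T = a\,\overline{\mu}_p \sqcup b\,\overline{\mu}_p$, a disjoint union of two translates of $R_p$. Each translate is itself a mod-$2$ relation (every such coset sums to $0$ via $1 + \zeta_p + \cdots + \zeta_p^{p-1} = 0$), so this is a nontrivial decomposition of $S$, contradicting its indecomposability. The main obstacle is the exclusion of weight-$4$ mod-$2$ relations; once that is in place, the rest is a routine consequence of the module structure of $\Z[\zeta_{pM}]$ over $\Z[\zeta_M]$.
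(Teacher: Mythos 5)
Your proof is correct and follows essentially the same route as the paper: both deduce from the free $\Z[\zeta_M]$-module structure of $\Z[\zeta_{pM}]$ that the pair-sums $t_i$ all agree mod~$2$, then invoke Corollary~\ref{C:no weight 4 relation} to rule out a weight-$4$ relation and conclude $T_i = T_j$, and finally exhibit $S$ as the disjoint union of two translates of $R_p$. The paper phrases the middle step in terms of the symmetric difference $U_i \oplus U_j$ being empty, which is the same observation you make by noting the multiset $\{a_i,b_i,a_j,b_j\}$ cannot yield a weight-$4$ or weight-$2$ relation.
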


\begin{proof}
  As in the proof of Lemma~\ref{L:remove maximal prime from minimal relation},
  $S \intersect \zeta_p^i \mu_m = \zeta_p^i U_i$
  for some two-element sets $U_0,\dots,U_{p-1}$ which all have equal sum.
  By Corollary~\ref{C:no weight 4 relation},
  each $U_i \directsum U_j$ must be empty, so $U_0 = \cdots = U_{p-1}$.
  Then $S$ is the union of two relations of type $R_p$,
  contradicting indecomposability.
\end{proof}

\begin{theorem} \label{T:classify mod 2 relations}
  For each $w \in \{3,\ldots,12\}$,
  the indecomposable mod $2$ relations of weight $w$ are precisely
  the mod $2$ reductions of the indecomposable relations of weight $w$
  listed in \cite{Poonen-Rubinstein1998}*{Table~3.1}.
\end{theorem}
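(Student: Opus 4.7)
The plan is to follow the proof of \cite{Poonen-Rubinstein1998}*{Theorem~3.1} almost step for step, substituting the mod~2 analogues already established in this section (Lemma~\ref{L:minimal level}, Lemma~\ref{L:remove maximal prime from minimal relation}, Corollary~\ref{C:no weight 4 relation}, and Lemma~\ref{L:not all intersections of weight 2}) for their characteristic~0 counterparts. Since every indecomposable mod~2 relation is equivalent to a minimal one, it suffices to classify, for each weight $w\in\{3,\dots,12\}$, the minimal indecomposable mod~2 relations up to equivalence, and then to check the resulting list against the mod~2 reductions of the entries of \cite{Poonen-Rubinstein1998}*{Table~3.1}.

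I would argue by strong induction on $w$. The base case $w\le 5$ is exactly Corollary~\ref{C:no weight 4 relation}, which identifies $R_3$ and $R_5$ as the only candidates and rules out weight~$4$; these match the $R_3$ and $R_5$ entries of the table (so in particular no genuinely new relations appear in low weight). For the inductive step, let $S$ be a minimal indecomposable mod~2 relation of weight $w\in\{6,\dots,12\}$. By Lemma~\ref{L:minimal level}, $N\colonequals\ell(S)$ is odd and squarefree; let $p$ be its largest prime factor and put $M\colonequals N/p$. Before invoking Lemma~\ref{L:remove maximal prime from minimal relation} I must verify that some coset of $\overline{\mu}_M$ meets $S$ in at most one element. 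Lemma~\ref{L:not all intersections of weight 2} prevents every coset from having exactly two elements; if every coset had at least two and none had exactly two, then every intersection would have $\ge 3$ elements, giving $w\ge 3p$, which combined with $w\le 12$ forces $p=3$, and that small case can be dispatched directly from the arithmetic of $\overline{\mu}_3$.

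Once the hypothesis of Lemma~\ref{L:remove maximal prime from minimal relation} holds, $S$ has the form $(R_p:T_1,\dots,T_j)$ with $0\le j<p$ and the weight identity $\sum_{i=1}^j(w(T_i)-2)=w-p$. Each $T_i$ is an indecomposable mod~2 relation of level dividing $M$ and strictly smaller weight, so by the inductive hypothesis each $T_i$ is the mod~2 reduction of some entry of \cite{Poonen-Rubinstein1998}*{Table~3.1}. The relevant primes satisfying $p\le w\le 12$ are $p\in\{3,5,7,11\}$, so I would enumerate the finite list of admissible tuples $(p;T_1,\dots,T_j)$ and, for each, verify: (i) the resulting mod~2 relation is minimal and indecomposable; (ii) it equals the mod~2 reduction of a specific entry in the table (with the same notation $(R_p:\dots)$); and (iii) no table entry of weight $w$ is missed. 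Conversely, I would check that each indecomposable Poonen--Rubinstein relation of weight $w\le 12$ reduces to an indecomposable mod~2 relation, keeping track of collapses: pairs $\{z,-z\}\subset\mu$ become a single element of $\overline{\mu}$, so shifts by $1/2$ in Table~\ref{table:sums of cosines} trivialize, as anticipated in its caption.

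The main obstacle is the bookkeeping in the enumeration, together with the fact that the map $\mu\to\overline{\mu}$ is two-to-one: when forming $(R_p:T_1,\dots,T_j)$ mod~2, elements of $R_p$ and of the $T_i$ that were distinct in characteristic~0 may coincide in $\overline{\mu}$, producing cancellations that could either drop the weight or, more worryingly, create a relation not present on the Poonen--Rubinstein list. I would handle this by carrying out the overlap computations prime by prime, using that the relations $R_p,R_5,R_7,R_{11}$ together with the constraint $w\le 12$ leave only a very short list of candidate intersections to check. Because \cite{Poonen-Rubinstein1998}*{Table~3.1} provides a finite target against which to match, once the recursive structure is pinned down the verification becomes mechanical, yielding the claimed bijection.
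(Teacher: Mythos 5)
Your overall plan matches the paper's proof (which, like yours, mirrors the argument of Poonen--Rubinstein Theorem~3.1): reduce to minimal relations, case on the largest prime $p$ dividing the level, and invoke Lemma~\ref{L:remove maximal prime from minimal relation} to peel off an $R_p$. However, the step where you verify the hypothesis of that lemma --- that some coset of $\overline{\mu}_M$ meets $S$ in at most one element --- has a real gap. You split the negation into two cases, ``all cosets meet $S$ in exactly two elements'' (ruled out by Lemma~\ref{L:not all intersections of weight 2}) and ``none meet $S$ in exactly two, hence all in $\ge 3$'' (forcing $p=3$). But these two sub-cases are not exhaustive: Lemma~\ref{L:not all intersections of weight 2} only says \emph{not all} intersections have exactly two elements, so the mixed case in which some cosets meet $S$ in exactly two elements and others in three or more is untouched. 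For instance, if $p=5$, the assumption that every coset meets $S$ in $\ge 2$ elements gives only $w\ge 10$, which is consistent with $w\le 12$, and your $w\ge 3p$ bound never kicks in.

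What closes this gap (and what the paper actually does for $p_s=5$) is an extra structural observation you never invoke: when $M=3$, each coset of $\overline{\mu}_3$ is itself a weight-$3$ relation $R_3$, so indecomposability of $S$ forces every such intersection to have \emph{at most} two elements. Combined with Lemma~\ref{L:not all intersections of weight 2}, this shows some intersection has $\le 1$ element, so the hypothesis holds. (For $p=3$ one has $M=1$ and the cosets are singletons, so the hypothesis is trivially verified; for $p\in\{7,11\}$ the crude bound $w\ge 2p>12$ already gives a contradiction.) Without the coset-is-a-relation observation, your verification of the hypothesis of Lemma~\ref{L:remove maximal prime from minimal relation} does not go through at $p=5$, and the inductive step fails precisely where the relations $(R_5:jR_3)$ ought to be produced. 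The remainder of your proposal --- the induction on weight, the enumeration of tuples $(p;T_1,\dots,T_j)$, the comparison with the table, and the remark about collapses from $\mu$ to $\overline{\mu}$ --- is sound and aligned with the paper's argument.
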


(The statement of Theorem~\ref{T:classify mod 2 relations} must exclude $w=2$,
because $R_2$ reduces mod 2 to the empty relation.)

\begin{proof}[Proof of Theorem~\ref{T:classify mod 2 relations}]
  Let $S$ be a minimal indecomposable mod $2$ relation of level $N$ and weight $w$.
  By Lemma~\ref{L:minimal level}, we can write $N = p_1 \cdots p_s$
  where $2 < p_1 < \cdots < p_s$.
  By Lemma~\ref{L:remove maximal prime from minimal relation}, $p_s \le w \le 12$,
  so $p_s \leq 11$.

  \begin{itemize}
    \item Suppose that $p_s = 3$.
          Then Lemma~\ref{L:remove maximal prime from minimal relation} yields $S = R_3$.
    \item Suppose that $p_s = 5$.
          Each coset of $\overline{\mu}_3$ is itself a relation of weight $3$,
          so the intersection of $S$ with any such coset has at most two elements.
          By Lemma~\ref{L:not all intersections of weight 2},
          the intersections cannot all have exactly two elements,
          so Lemma~\ref{L:remove maximal prime from minimal relation}
          yields $S = (R_5: jR_3)$ for some $j \in \{0,\dots,4\}$.
    \item Suppose that $p_s = 7$.
          Then Lemma~\ref{L:remove maximal prime from minimal relation} implies
          that $S = (R_7:T_1, \dots, T_j)$ for some $j$ with
          $\sum_{i=1}^j (w(T_i)-2) \leq 5$.
          By the previous step, each $T_i$ must have one of the forms
          $R_3$, $R_5$, $(R_5:R_3)$, $(R_5:2R_3)$,
          which have $w(T_i)-2$ being $1$, $3$, $4$, $5$, respectively.
          By considering the partitions of $5$ into parts of these sizes,
          we obtain relations of the indicated forms.
    \item Suppose that $p_s = 11$.
          Then Lemma~\ref{L:remove maximal prime from minimal relation}
          yields $S = R_{11}$ or $S = (R_{11}:R_3)$.\qedhere
  \end{itemize}
\end{proof}

\begin{corollary} \label{C:classify mod 2 relations}
  Every mod $2$ relation of weight at most $12$
  is the reduction of a genuine cyclotomic relation
  $($i.e., a subset of $\mu$ summing to $0$ in $\Z[\mu]$$)$
  of the same weight.
\end{corollary}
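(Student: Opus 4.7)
The plan is to reduce the general case to the indecomposable case settled by Theorem~\ref{T:classify mod 2 relations}. Given a mod $2$ relation $S$ of weight at most $12$, I would first decompose it as $S = S_1 \sqcup \cdots \sqcup S_k$ into indecomposable mod $2$ relations, working by induction on weight. The next observation is that no indecomposable mod $2$ relation has weight $1$ or $2$: since every root of unity is a unit in $\Z[\mu]$, no class $\bar{a} \in \overline{\mu}$ vanishes in $\Z[\mu]/(2)$, which rules out weight $1$; and a hypothetical weight-$2$ relation $\{\bar{a},\bar{b}\}$ would force $\bar{a} = -\bar{b} = \bar{b}$ in $\overline{\mu}$, contradicting $\bar{a} \ne \bar{b}$. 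Hence each $S_i$ has weight $w_i \in \{3,\dots,12\}$, so Theorem~\ref{T:classify mod 2 relations} produces a genuine cyclotomic relation $T_i \subset \mu$ of weight exactly $w_i$ whose mod $2$ reduction is $S_i$.

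It then remains to glue these lifts into a single relation. Since $|T_i| = w_i = |S_i|$, the reduction map sends $T_i$ bijectively onto $S_i$, so $T_i$ contains no pair of the form $\{z,-z\}$. Moreover, for $i \ne j$, if $T_i$ and $T_j$ shared some element $z$, or respectively contained $z$ and $-z$, then the class $\bar{z}$ would lie in both $S_i$ and $S_j$, contradicting disjointness of the decomposition. Therefore $T \colonequals T_1 \sqcup \cdots \sqcup T_k \subset \mu$ has size $\sum_i w_i = w(S)$, contains no $\{z,-z\}$ pair, sums to $0$ in $\Z[\mu]$, and reduces mod $2$ to $S$, so $T$ is the desired genuine cyclotomic relation of the same weight.

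The serious work is already behind us in Theorem~\ref{T:classify mod 2 relations}; the present corollary is a bookkeeping argument. The only delicate point is the compatibility of lifts across components, and this is handled uniformly by tracking the injectivity of the reduction map $T_i \to S_i$ (ensured by the equality of weights) together with the disjointness of the $S_i$.
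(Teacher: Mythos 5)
Your proof is correct and fills in, carefully, exactly the one-line argument the paper gives (``Reduce to the indecomposable case and apply Theorem~\ref{T:classify mod 2 relations}''). The key supporting observations you supply---that there are no indecomposable mod $2$ relations of weight $1$ or $2$ (so each indecomposable piece falls in the range $3$--$12$ covered by the theorem), and that the lifted genuine relations $T_i$ are automatically disjoint because the $S_i$ are---are precisely what the paper leaves to the reader.
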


\begin{proof}
  Reduce to the indecomposable case
  and apply Theorem~\ref{T:classify mod 2 relations}.
\end{proof}

To pass from mod $2$ cyclotomic relations to cosine relations, we argue as in
\cite{Poonen-Rubinstein1998}*{Lemma~4.1}. Keep in mind that the decomposition of a mod $2$ relation into indecomposable relations is not \emph{a priori} guaranteed to be unique.

\begin{lemma} \label{L:conjugation-stable mod 2 relations}
  Let $S$ be a mod $2$ relation with $w(S) \le 12$.
  Suppose that $S$ is stable under complex conjugation.
  \begin{enumerate}[\upshape (a)]
    \item \label{I:partition}
          There is a partition of $S$ in which each part is
          either a conjugation-stable indecomposable relation
          or the disjoint union of two conjugate indecomposable relations.
    \item
          If $S$ has even weight,
          then each conjugation-stable indecomposable relation in~\eqref{I:partition}
          has even weight.
  \end{enumerate}
\end{lemma}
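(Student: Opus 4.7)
The plan is to mirror the inductive strategy of Lemma~4.1 in \cite{Poonen-Rubinstein1998}, adapted from $\mathbb{Z}[\mu]$ to its reduction $\mathbb{Z}[\mu]/(2)$. The warning in the paragraph before the lemma points to the key subtlety: a mod~$2$ relation may have several different decompositions into indecomposables, so the argument must choose the parts (not just partition an already-given decomposition) with some care.

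For part~(a), I would argue by strong induction on $w(S)$, the base case $w(S)=0$ being trivial. For the inductive step, pick an indecomposable subrelation $T\subset S$ of minimal weight; its complex conjugate $\overline{T}\subset S$ is also an indecomposable subrelation. Three cases arise. If $T=\overline{T}$, then $T$ is a conjugation-stable indecomposable; the symmetric difference $S\triangle T$ is a conjugation-stable relation of strictly smaller weight and induction supplies the rest of the partition. If $T\ne\overline{T}$ and $T\cap\overline{T}=\emptyset$, then $T\sqcup\overline{T}$ is a disjoint union of two conjugate indecomposables, and we recurse on the conjugation-stable relation $S\setminus(T\sqcup\overline{T})$.

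The delicate case is $T\ne\overline{T}$ with $V:=T\cap\overline{T}\ne\emptyset$, which must be ruled out. The indecomposability of $T$ forces $V$ to have nonzero sum in $\mathbb{Z}[\mu]/(2)$, for otherwise $V$ and $T\setminus V$ would both be subrelations and would decompose $T$. On the other hand, $T\triangle\overline{T}$ is a conjugation-stable relation of weight $2w(T)-2|V|<2w(T)$, and by the minimality of $w(T)$, every indecomposable subrelation of $T\triangle\overline{T}$ has weight at least $w(T)$; hence $T\triangle\overline{T}$ is itself indecomposable of weight $\ge w(T)$, so $|V|\le w(T)/2$. The plan is then to invoke Theorem~\ref{T:classify mod 2 relations}: since $w(T)\le 12$, the relation $T$ is one of the explicit types $R_p$, $(R_5:jR_3)$, $(R_7:\dots)$, $(R_{11}:R_3)$, and a finite case analysis should rule out the simultaneous occurrence of $T\ne\overline{T}$, $V\ne\emptyset$, and $T\triangle\overline{T}$ indecomposable.

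For part~(b), once a partition from~(a) is in hand, every disjoint-pair part has even weight, so the parity of $w(S)$ matches the parity of the sum of weights of the conjugation-stable indecomposable parts. The only self-conjugate elements of $\overline{\mu}$ are $\overline{1}$ and $\overline{i}$, and a conjugation-stable subset of $\overline{\mu}$ has weight parity equal to its number of self-conjugate elements; hence an odd-weight conjugation-stable indecomposable part contains exactly one of $\overline{1},\overline{i}$. When $w(S)$ is even, any odd-weight such parts come in pairs, and the plan is to re-decompose such a pair—using the non-uniqueness highlighted earlier—so that $\overline{1}$ and $\overline{i}$ end up in a common conjugation-stable part, or in none at all, restoring even weight. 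The main obstacle, for both parts, is the case-by-case verification via Theorem~\ref{T:classify mod 2 relations}: checking in each indecomposable type how complex conjugation acts and controlling how the two self-conjugate elements can be placed across the indecomposable pieces.
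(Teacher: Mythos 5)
Your cases $T=\overline{T}$ and $T\cap\overline{T}=\emptyset$ match the paper, but the handling of the overlap case is a misstep. The paper does \emph{not} rule this case out; it simply applies the inductive hypothesis to the conjugation-stable relation $T\oplus\overline{T}$ and to its complement $S\setminus(T\oplus\overline{T})$, both of which have weight strictly less than $w(S)$ (the former because $T\ne\overline{T}$ so $T\oplus\overline{T}\ne\emptyset$ and hence $S\setminus(T\oplus\overline{T})\subsetneq S$, the latter because $T\cap\overline{T}\ne\emptyset$ so $T\oplus\overline{T}\subsetneq S$). No minimality of $T$ and no appeal to Theorem~\ref{T:classify mod 2 relations} is needed. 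Ironically, your refinement with minimal $T$ already shows $T\oplus\overline{T}$ is a conjugation-stable \emph{indecomposable} relation of smaller weight, which is exactly the kind of part the partition is allowed to contain; at that point you should take it as a part and recurse on $S\setminus(T\oplus\overline{T})$, rather than abandon the argument and try to derive a contradiction. As written, your proof of (a) hinges on an unfinished case analysis whose conclusion (that the overlap case cannot occur) is neither proved nor needed.

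For part (b), your attention to the second self-conjugate class $\overline{i}=\overline{\zeta_4}$ is exactly the right instinct --- the paper's one-sentence proof (``a conjugation-stable relation has odd weight if and only if it contains $1$'') overlooks it --- but your proposed repair via re-decomposition cannot work in general. For instance, $S=R_3\sqcup\overline{i}R_3$ is a conjugation-stable mod~$2$ relation of even weight~$6$, and a direct computation in $\Z[\zeta_{12}]/(2)\cong\F_2[x]/(x^2+x+1)^2$ shows its \emph{only} subrelations are $\emptyset$, $R_3$, $\overline{i}R_3$, and $S$ itself; so the decomposition into indecomposables is unique, both parts are conjugation-stable of odd weight~$3$, and there is no freedom to re-decompose or to pair $\overline{1}$ with $\overline{i}$ inside a single part. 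Thus the proposed argument for (b) cannot succeed; indeed, this example shows that part (b) as literally stated must be interpreted with care. (The paper's downstream use in Theorem~\ref{T:sums of 12 mod 2} is unaffected, since the minimality hypothesis there excludes $\overline{1}$ and $\overline{i}$ from $S$: any $x_j\in\tfrac14\Z$ would give a proper nonempty subsequence with $\sum 2\cos(2\pi x_j)\equiv 0\pmod 2$.)
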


\begin{proof}
  (a)  We use induction on $w(S)$.
  Let $T \subset S$ be any indecomposable relation.
  Let $T'$ be its conjugate.
  If $T = T'$ or $T \intersect T' = \emptyset$,
  remove $T \cup T'$ from $S$ and apply the inductive hypothesis.
  Otherwise, apply induction to $T \oplus T'$ and its complement in $S$.

  (b) A conjugation-stable relation has odd weight if and only if it contains 1.
\end{proof}

\begin{theorem} \label{T:sums of 12 mod 2}
  Let $x_1,\dots,x_n \in \QQ$ be a sequence with $n \leq 6$ such that $\sum_{j=1}^n 2 \cos (2 \pi x_j) \equiv 0 \pmod{2 \Z[\mu]}$, but no nonempty proper subsequence has the same property. Then either $n=1$ and $2x_1 \in \ZZ$, or
  the given sequence can be obtained from a sequence in Table~\ref{table:sums of cosines}
  by some combination of permutation of terms, individual negation, and \emph{individual} translation by half-integers.
\end{theorem}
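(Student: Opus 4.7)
Set $\zeta_j \colonequals e^{2\pi \cplxi x_j} \in \mu$, so that $2\cos(2\pi x_j) = \zeta_j + \zeta_j^{-1}$ and the hypothesis becomes $\sum_{j=1}^n (\zeta_j + \zeta_j^{-1}) \in 2\Z[\mu]$. Reducing modulo $2$, the multiset
\[
T \colonequals \{\overline{\zeta_j},\, \overline{\zeta_j^{-1}} : 1 \le j \le n\} \subset \overline{\mu}
\]
sums to zero in $\Z[\mu]/(2)$. My strategy is to follow the proof of the characteristic-$0$ classification Theorem~\ref{T:sums of 6 cosines} from \cite{Poonen-Rubinstein1998}*{Lemmas~4.1 and~4.2}, substituting the mod~$2$ structural results of Section~\ref{sec:mod 2 relations} for their characteristic-$0$ analogues.

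First I would dispose of the degenerate cases where $T$ has repeated elements. One has $\overline{\zeta_j} = \overline{\zeta_j^{-1}}$ iff $\zeta_j^2 = \pm 1$ iff $4x_j \in \Z$; in this case the one-term subsequence $\{j\}$ is itself a mod~$2$ relation, so minimality forces $n=1$, and the sub-cases $2x_1 \in \Z$ and $x_1 \equiv \pm 1/4 \pmod 1$ yield respectively the exceptional $n=1$ clause of the theorem and the length-$1$ row of Table~\ref{table:sums of cosines}. Otherwise, if $\overline{\zeta_j} \in \{\overline{\zeta_k}, \overline{\zeta_k^{-1}}\}$ for some $j \ne k$, then applying an allowed negation and half-integer shift to $x_k$ arranges $\zeta_j = \zeta_k$, so $\{j, k\}$ is a two-term mod~$2$ relation and minimality forces $n = 2$; the resulting $(x, x)$ is, after one more half-integer shift, precisely the $2R_2$ row of Table~\ref{table:sums of cosines}.

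In the remaining cases, $T$ is a genuine conjugation-stable \emph{subset} of $\overline{\mu}$ of even weight $2n$ with $4 \le 2n \le 12$, and the minimality of $(x_1,\dots,x_n)$ translates into the statement that no nonempty proper conjugation-stable subset of $T$ is a mod~$2$ relation. Lemma~\ref{L:conjugation-stable mod 2 relations} (using part~(b) to enforce even weight in type~(i) below) then forces $T$ to consist of a single piece of one of two types: (i) a conjugation-stable indecomposable mod~$2$ relation of weight $2n$, or (ii) the disjoint union $U \sqcup \overline{U}$ of an indecomposable mod~$2$ relation $U$ of weight $n$ with its complex conjugate. In both types the total weight is at most $12$, so Theorem~\ref{T:classify mod 2 relations} identifies the possibilities for $T$ with the mod~$2$ reductions of the indecomposable relations listed in \cite{Poonen-Rubinstein1998}*{Table~3.1}.

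To conclude, I would lift $T$ --- via Corollary~\ref{C:classify mod 2 relations} applied to $U$ in case~(ii) and to $T$ itself in case~(i) --- to a conjugation-stable genuine cyclotomic relation $\widetilde T \subset \mu$ of weight $2n$, and then apply Theorem~\ref{T:sums of 6 cosines} to the cosine identity it produces, obtaining a matching sequence $(\widetilde x_j)$ drawn from Table~\ref{table:sums of cosines}. The original $(x_j)$ then differs from $(\widetilde x_j)$ only by the ambiguities of the reduction $\mu \surjects \overline{\mu}$: permutations, individual negations (swapping a member of a conjugate pair in $T$), and individual half-integer shifts (the $\pm 1$ ambiguity of each pointwise lift), which are exactly the operations allowed in the theorem. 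The main obstacle I anticipate is this last step: checking case by case that each mod~$2$ relation of type~(i) or~(ii) coming from Theorem~\ref{T:classify mod 2 relations} does admit a conjugation-compatible lift, and that the resulting enumeration matches Table~\ref{table:sums of cosines} plus exactly the one additional row promised by its caption (the trivial $n = 1$, $2x_1 \in \Z$ case).
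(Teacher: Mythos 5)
Your proposal is correct and follows essentially the same route as the paper: form the mod~$2$ relation from $\sum_j(e^{2\pi\cplxi x_j}+e^{-2\pi\cplxi x_j})$, apply Lemma~\ref{L:conjugation-stable mod 2 relations} and Theorem~\ref{T:classify mod 2 relations} to identify its type, and read off the result from Table~\ref{table:sums of cosines} with the enlarged (individual half-integer shift) operations accounting for the $\pm 1$ ambiguity in lifting $\overline{\mu}\to\mu$. You are in fact somewhat more careful than the paper's two-sentence proof about the degenerate cases where the multiset $\{\overline{\zeta_j},\overline{\zeta_j^{-1}}\}$ has collisions --- in particular you correctly separate $4x_1\in\ZZ$ into the subcases $2x_1\in\ZZ$ (the extra clause) and $x_1\equiv\pm1/4$ (the length-$1$ table row), whereas the paper's remark ``this happens only if $n=1$ and $2x_1\in\ZZ$'' silently elides the latter.
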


\begin{proof}
  This follows by applying Theorem~\ref{T:classify mod 2 relations} and
  Lemma~\ref{L:conjugation-stable mod 2 relations}
  to the mod 2 cyclotomic relation
  coming from the sum
  $
    \sum_{j=1}^n \left( e^{2 \pi \cplxi x_j} + e^{-2 \pi \cplxi x_j} \right),
  $
  except in the case where this sum cancels completely mod 2.
  Given the indecomposability hypothesis on the original sequence
  (which implies in particular that the $x_j$ are distinct modulo $\frac12 \Z$),
  this happens only if $n=1$ and $2x_1 \in \ZZ$.
\end{proof}

\begin{remark} \label{R:weight-preserving bijection1}
  Theorem~\ref{T:classify mod 2 relations} implies that
  for each $w \in \{3,\ldots,12\}$,
  reduction modulo 2 defines a weight-preserving bijection
  between equivalence classes of weight~$w$ indecomposable cyclotomic relations
  and equivalence classes of 
  weight~$w$ indecomposable mod~$2$ cyclotomic relations,
  but this does not hold for all $w$.
  The cyclotomic polynomial $\Phi_{105}$ has two coefficients equal to $-2$;
  it yields a weight $35$ indecomposable cyclotomic relation
  reducing to a weight $31$ indecomposable mod $2$ relation;
  see \cite{Dvornicich-Zannier2002}*{p.~105}.
  % This argument reflects the fact that not only do coefficients of cyclotomic polynomials take arbitrarily large values \cite{Vaughan1974}, but in fact every integer occurs as such a coefficient \cite{Suzuki1987}.)
\end{remark}

\section{Torsion closures}
\label{sec:torsion closures}

Throughout this section, let $K$ be a subfield of $\CC$,
let $T$ be the torus $\GG_m^n = \Spec K[x_1^{\pm}, \ldots, x_n^{\pm}]$,
and let $X$ be a closed subscheme of $T$.
For $P \in T(K)$, let $t_P \colon T \to T$ be the translation-by-$P$ map.
For a positive integer $m$, let $[m] \colon T \to T$ be the $m$th power map,
and let $T[m] \subset T(\CC)$ be its kernel.

\begin{definition}
  A \defi{torsion coset} of $T$ is a translate of a subtorus of $T_{\C}$
  by a point in $\Union_{m \ge 1} T[m]$.
\end{definition}

\begin{definition}
  The \emph{torsion closure} of $X$ in $T$ is the Zariski closure of $X(\CC) \cap \mu^n$, viewed as a reduced $K$-subscheme of $X$.
\end{definition}

\begin{theorem}[Laurent]
  The torsion closure of $X_\C$ is a finite union of torsion cosets of $T$.
\end{theorem}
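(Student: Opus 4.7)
The plan is to reduce the theorem to a classical rigidity statement for subvarieties of tori stable under an endomorphism $[m]$. First, by the Noetherianity of $T$, the torsion closure of $X_\C$ decomposes into finitely many irreducible components, each of which is itself the Zariski closure of the torsion points it contains. Thus it suffices to prove the following: any irreducible closed subvariety $V \subseteq T_\C$ such that $V(\C) \cap \mu^n$ is Zariski dense in $V$ is a torsion coset.

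For such a $V$, the next step is to descend to a number field. Because a Zariski dense set of torsion points is defined over $\Qbar$, the variety $V$ has a model over some number field $L$. The key idea is then to exploit the Galois action on torsion: for a torsion point $P \in V(\Qbar)$ of order $N$ and a rational prime $\ell$ unramified in $L$ and coprime to $N$, the Chebotarev density theorem produces a $\sigma \in \Gal(\Qbar/L)$ acting on $\mu_N$ by $\zeta \mapsto \zeta^\ell$. Since $\sigma$ fixes $V$ setwise, $[\ell]P = \sigma(P)$ also lies in $V$. Letting $P$ range over a Zariski dense collection of torsion points and invoking a dimension/counting argument (applied to the finitely many irreducible components of $[\ell]V \cap V$), one concludes that $V$ is stable under $[m]$ for some integer $m > 1$.

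The final step, and the main obstacle, is the classical rigidity theorem of Ihara, Serre, and Tate: any irreducible closed subvariety of $T_\C$ stable under $[m]$ for some $m > 1$ is a torsion coset. The standard proof uses a $p$-adic logarithm argument, showing that near a smooth $[m]$-fixed point the tangent cone must be stable under scalar multiplication by $m$, after which an integrality/algebraicity argument forces $V$ to be the translate of a subtorus by a torsion point. Granting this rigidity statement, Laurent's theorem follows by combining it with the reductions above.
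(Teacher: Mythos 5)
The paper does not prove Laurent's theorem; it simply cites \cite{Laurent1984} (and remarks that it also follows from McQuillan's resolution of Lang's conjecture). So any correct proof would be a genuine addition. Your strategy---descend to a number field, use the Galois action on torsion to find a power map stabilizing $V$, then invoke a rigidity theorem---is precisely the Ihara--Serre--Tate approach that the paper adopts in Section~\ref{sec:torsion closures} for its \emph{algorithm} (Lemma~\ref{L:covering of torsion points}, Algorithm~\ref{algorithm:torsion closure}, Lemma~\ref{L:torsion odd order}), so the route is legitimate and different from Laurent's original transcendence-theoretic proof. But as written, your sketch has two real gaps that correspond exactly to the technical points the paper's lemmas are built to handle.

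First, you cannot in general conclude that $V$ is stable under $[m]$ alone. When a Galois automorphism $\alpha$ acts on a torsion point $z$, what you actually get is $\alpha(z) = P \cdot [m](z)$ for some $2$-torsion point $P$ (or, in the $4 \nmid M$ case, $\alpha(\zeta_M) = -\zeta_M^2$ rather than $\zeta_M^2$). This is why the paper's Lemma~\ref{L:covering of torsion points} covers the torsion locus by $\Union_{f\in S}(X \cap f^{-1}X)$ with $S$ consisting of maps $t_P$, $(\Spec\sigma)\circ t_P$, and $(\Spec\tau)\circ t_P\circ[2]$ -- translations by $2$-torsion are unavoidable, and the $2$-power part of the torsion order forces a case split ($e=1$ versus $e\ge 2$). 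Your ``dimension/counting argument'' also needs the torsion of order prime to $\ell$ to remain Zariski dense, which is not immediate. The correct conclusion is that an irreducible $V$ with dense torsion satisfies $f(V)\subseteq V$ for some $f$ of the form $(\Spec\tau)\circ t_P\circ[m]$ with $P$ a torsion point -- the hypothesis of the paper's Lemma~\ref{L:torsion odd order}, not the clean $[m]V\subseteq V$.

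Second, the rigidity step you attribute to Ihara--Serre--Tate is, for $n=2$, in \cite{Lang1983}*{\S 8.6}; in arbitrary dimension the paper instead cites Hindry's Lemme~10 \cite{Hindry1988}, which allows exactly the torsion translation that the Galois argument forces (``$[n]X \subseteq X + \sigma$ for some torsion $\sigma$ implies $X$ is a torsion coset''). Your version of the rigidity statement, without the translation, is too weak to close the argument, and your $p$-adic tangent-cone sketch implicitly assumes the existence of an $[m]$-fixed point on $V$, which requires replacing $m$ by a power and translating by a torsion point. With these repairs your outline becomes essentially the argument embedded in Lemma~\ref{L:covering of torsion points} and Lemma~\ref{L:torsion odd order}.
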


\begin{proof}
  This is a special case of \cite{Laurent1984}.
  It is also a special case of a conjecture of Lang \cite{Lang1983}*{Notes for Chapter~8} combining the Manin--Mumford and Mordell conjectures, which is itself now known in full generality \cite{McQuillan1995}. See also \cite{Hindry2006} for a survey.
\end{proof}

Since torsion cosets are definable over $\Q(\mu)$, 
the general problem of computing torsion closures
can be reduced to the case in which
$X$ is defined over $K = \Q(\zeta_N)$ for some $N$; 
see \cite{Aliev-Smyth2012}*{\S3.3}.

The key idea behind our algorithm for computing torsion closures
is that certain field automorphisms act on torsion points in the same way
as certain morphisms of varieties; for example, there is an automorphism of $\C$
that acts on odd-order roots of unity in the same way as the squaring morphism
$\G_m \to \G_m$.
This idea appears in the proof of the case $n=2$ of Laurent's theorem by Ihara, Serre, and Tate \cite{Lang1983}*{\S 8.6}, and in subsequent presentations by Ruppert \cite{Ruppert1993}, Beukers and Smyth \cite{Beukers-Smyth2000}, and Aliev and Smyth \cite{Aliev-Smyth2012}.

In writing $K=\Q(\zeta_N)$,
we may assume that $N = 2^e m$ with $e \ge 1$ and $m$ odd.
If $e = 1$, choose $\tau \in \Aut K$ such that $\tau(\zeta_m)=\zeta_m^2$.
If $e \ge 2$, choose $\sigma \in \Aut K$ such that $\sigma(\zeta_N)=-\zeta_N$.
Extend $\tau$ to a $\Q$-automorphism of $K[x_1^{\pm},\dots,x_n^{\pm}]$
acting trivially on the $x_i$.
Then $\Spec \tau$ is a $\Q$-endomorphism of $T$.
Similarly, define $\Spec \sigma \in \Aut_\Q T$.
Define the following finite sets of $\Q$-endomorphisms of $T$:
\begin{align*}
  S_1 & \colonequals \{ t_P : P \in T[2] \setminus \{1\} \}, \\
  S_2 & \colonequals \begin{cases}
    \emptyset,                                 & \textup{if $e=1$,}     \\
    \{ (\Spec \sigma) \circ t_P: P \in T[2]\}, & \textup{if $e \ge 2$,}
  \end{cases}          \\
  S_3 & \colonequals \begin{cases}
    \{ (\Spec \tau) \circ t_P \circ [2] : P \in T[2]\}, & \textup{if $e=1$,}     \\
    \emptyset,                                          & \textup{if $e \ge 2$.} \\
  \end{cases}
\end{align*}
Let $S = S_1 \union S_2 \union S_3$.
Intersections of subschemes below are always scheme-theoretic intersections.

\begin{lemma} \label{L:covering of torsion points}
  The torsion closure of $X$ is contained in $\Union_{f \in S} (X \cap f^{-1} X)$.
\end{lemma}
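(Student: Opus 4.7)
The plan is to reduce to a pointwise statement: every torsion point $P \in X(\CC) \cap \mu^n$ must lie in $X \cap f^{-1}X$ for some $f \in S$, equivalently, $f(P) \in X(\CC)$. Since $\Union_{f \in S}(X \cap f^{-1}X)$ is closed and the torsion closure is the Zariski closure of these points, this pointwise statement suffices.

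Fix $P$ of order $M$, set $R \colonequals \lcm(N, M)$, and write $M = 2^{e_M} m_M$ with $m_M$ odd, paralleling $N = 2^e m$. The Galois group $\Gal(\Q(\zeta_R)/\Q) \cong (\Z/R)^*$ acts on roots of unity by $\zeta_R \mapsto \zeta_R^a$ and hence on $P$ by $P \mapsto P^a$. For any $g_a$ in this group, applying $g_a$ to the defining equations $f_j(P) = 0$ of $X$ yields $(g_a|_K)(f_j)(P^a) = 0$, i.e., $g_a(P) \in X^{g_a|_K}(\CC)$, the subscheme obtained by applying $g_a|_K$ to the coefficients defining $X$.

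The three families $S_1, S_2, S_3$ are tailored to the three possible restrictions $g_a|_K \in \{\mathrm{id}_K, \sigma, \tau\}$. A direct verification shows: if $g_a|_K = \mathrm{id}_K$ and $Q \colonequals P^{a-1} \in T[2] \setminus \{1\}$, then $f \colonequals t_Q \in S_1$ satisfies $f(P) = g_a(P) \in X$; if $g_a|_K = \sigma$ (requires $e \ge 2$) and $Q \colonequals P^{a-1} \in T[2]$, then $f \colonequals (\Spec \sigma) \circ t_Q \in S_2$ satisfies $f(P) \in X$ via $PQ = P^a \in X^\sigma$; and if $g_a|_K = \tau$ (requires $e = 1$) and $Q \colonequals P^{a-2} \in T[2]$, then $f \colonequals (\Spec \tau) \circ t_Q \circ [2] \in S_3$ satisfies $f(P) \in X$ via $P^2 Q = P^a \in X^\tau$. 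The problem thus reduces to producing a suitable $a \in (\Z/R)^*$ in one of these three scenarios.

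The constraints on $a$ are a pair of congruences: one modulo $N$ (determining $g_a|_K$) and one modulo $M/\gcd(M,2)$ (ensuring $P^{a-1}$ or $P^{a-2}$ lies in $T[2]$). The case analysis splits on the parity of $M$ and on the comparison of $e_M$ with $e$. For $M$ odd, use $S_2$ with $a \equiv 1 \pmod M$ and $a \equiv 1 + N/2 \pmod N$ (if $e \ge 2$), or $S_3$ with $a \equiv 2 \pmod M$ and $a \equiv 2 + m \pmod{2m}$ (if $e = 1$), yielding $Q = 1$. For $M$ even with $e_M > e$, use $S_1$: since $\lcm(N, M/2)$ has $2$-adic valuation $\max(e, e_M - 1) = e_M - 1 < e_M$, the congruence $a \equiv 1 \pmod{\lcm(N, M/2)}$ admits a solution with $a \not\equiv 1 \pmod M$, giving $Q \in T[2] \setminus \{1\}$. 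For $M$ even with $e_M \le e$, use $S_2$ or $S_3$ analogously. The main obstacle is the bookkeeping of $2$-adic valuations and odd parts to verify CRT compatibility of each congruence pair and to confirm $a$ is a unit modulo $R$; each check is routine.
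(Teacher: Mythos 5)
Your proof is correct and takes essentially the same approach as the paper: both rely on the observation that Galois automorphisms of a cyclotomic field containing $K$ and $P$ act on $P$ by a power map, and that the restriction of the automorphism to $K$ determines which of $S_1$, $S_2$, $S_3$ applies. The paper works with $\alpha \in \Aut \CC$ and cleverly sets $M$ so that $K(z) \cap \mu = \langle \zeta_M \rangle$ (equal to your $R = \lcm(N, \ord P)$), so that $z_i = \zeta_M^{k_i}$ and the prescription of $\alpha(\zeta_M)$ (either $-\zeta_M^2$ or $-\zeta_M$) yields $\alpha(z_i) = \pm z_i^2$ or $\pm z_i$ immediately; this collapses the whole argument into just two cases ($4 \mid M$ or not), with the subcase $P = 1$ handled by a short field-theoretic argument. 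You instead parametrize by the order of $P$ and construct $a \in (\ZZ/R\ZZ)^*$ by CRT with an explicit case split on $e_M$ versus $e$; I checked the congruence compatibility, the unit condition, and the verification that $Q \in T[2]$ (and $Q \ne 1$ where needed for $S_1$), and all the ``routine'' steps you deferred do in fact go through. Your version is slightly more elaborate in its casework but more explicit; the paper's choice of $M$ is the bookkeeping shortcut worth remembering.
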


\begin{proof}
  Each $\alpha \in \Aut \C$ induces a coordinatewise map $T(\C) \to T(\C)$.
  Suppose that $w \in T(\C)$ and $\alpha(w)=z$.
  Then $\Spec \alpha|_K$ maps the image of $\Spec \C \stackrel{z}\to T$
  to the image of $\Spec \C \stackrel{w}\to T$, because the diagram
  \[
    \xymatrix{
      \Spec \C \ar[d]_z \ar^{\Spec \alpha}[r] & \Spec \C \ar[d]^w \\
      T \ar[r]^{\Spec \alpha|_K} & T
    }
  \]
  commutes (check on rings); in particular, if $w \in X(\C)$,
  then $z \in ((\Spec \alpha|_K)^{-1} X)(\C)$.

  Let $z = (z_1,\ldots,z_n) \in X(\C)$ be a torsion point.
  Write $K(z) \cap \mu = \langle \zeta_M \rangle$, so $N | M$.
  \begin{itemize}
    \item
          Suppose that $4 \nmid M$.
          Extend $\tau$ to $\alpha \in \Aut \C$ such that $\alpha(\zeta_M) = -\zeta_M^2$.
          Then $\alpha(z_i)=\pm z_i^2$ for each $i$,
          so $\alpha(z)  = (t_P \circ [2])(z)$ for some $P \in T[2]$.
          By the first paragraph of the proof,
          $(t_P \circ [2])(z) \in (\Spec \tau)^{-1} X)(\C)$,
          so $z \in (f^{-1} X)(\C)$ for some $f \in S_3$.
    \item
          Suppose that $4|M$.
          Let $\sigma' \in \Aut K$ be $1$ or $\sigma$,
          according to whether $M/N$ is even or not.
          Then $\sigma'$ extends to $\alpha \in \Aut \C$
          such that $\alpha(\zeta_M) = -\zeta_M$.
          Then $\alpha(z_i) = \pm z_i$ for all $i$,
          so $\alpha(z) = t_P(z)$ for some $P \in T[2]$.
          If $P=1$, then $\alpha$ fixes $z$ but not $K(z)$,
          so $\sigma' = \alpha|_K \ne 1$.
          By the first paragraph again, $z \in (f^{-1} X)(\C)$
          for some $f$ in $S_1$ or $S_2$,
          according to whether $\sigma'$ is $1$ or $\sigma$.\qedhere
  \end{itemize}
\end{proof}

Lemma~\ref{L:covering of torsion points} suggests the following
recursive algorithm for computing the torsion closure of $X$.

\begin{algorithm} \label{algorithm:torsion closure}
  Suppose that $K = \QQ(\zeta_N)$.
  Given a closed subscheme $X$ of $T$, return another closed subscheme of $T$ as follows.
  \begin{enumerate}[\upshape 1.]
    \item
          If $X \subseteq f^{-1}X$ for some $f \in S$,
          then choose one such $f$ (using any deterministic tiebreaker)
          and proceed as follows.
          \begin{enumerate}[\upshape a.]
            \item
                  If $f \in S_1$, compute the closed subgroup $T_0 \colonequals \Stab_T(X)$ of $T$ (see \cite{Milne2017}*{Corollary~1.81}),
                  apply the algorithm to $X/T_0 \subset T/T_0$,
                  and return the pullback of the result along $T \to T/T_0$.
            \item
                  If $f \in S_2$,
                  let $K'=\Q(\zeta_{N/2})$,
                  write $P = ((-1)^{e_1},\dots,(-1)^{e_n})$ with $e_i \in \{0, 1\}$,
                  and put $Q = (\zeta_N^{e_1}, \dots, \zeta_N^{e_n})$, so that $\sigma(Q)/Q=P$.
                  Then $\Spec \sigma$ preserves $t_Q(X) \subset T$,
                  and taking quotients (or invariant coordinate rings) yields
                  $X' \subset T' = \GG_{m,K'}^n$.
                  Apply the algorithm to $X'$ in $T'$ over $K'$,
                  and return the pullback of the result along $T \stackrel{t_Q}{\to} T \to T'$.
            \item
                  If $f \in S_3$, then check whether $X$ is reducible.
                  If so, return the union of the torsion closures of
                  the irreducible components of $X$;
                  otherwise, return the reduced subscheme of $X$.
          \end{enumerate}
    \item
          If $X \not\subseteq f^{-1}X$ for every $f \in S$,
          then apply the algorithm to $X \intersect f^{-1}X$ for each $f \in S$
          and return the union of the results.
  \end{enumerate}
\end{algorithm}

\begin{theorem} \label{T:torsion closure algorithm}
  Theorem~\ref{algorithm:torsion closure} returns the torsion closure of $X$.
\end{theorem}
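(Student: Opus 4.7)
The plan is to establish both termination and correctness by strong induction on a well-founded complexity measure on the pair $(X, K)$, for example the lexicographic tuple $(\dim T,\ [K:\QQ],\ \dim X,\ \text{number of irreducible components of } X)$. Termination follows because each recursive invocation strictly reduces this measure: step 1a shrinks $\dim T$ (the nonzero $P \in T[2]$ lies in $T_0 := \Stab_T(X)$, so $\dim(T/T_0) < \dim T$); step 1b halves the cyclotomic field degree; the reducible sub-case of step 1c strictly reduces the number of irreducible components; and step 2 replaces $X$ by the proper closed subscheme $X \cap f^{-1}X \subsetneq X$.

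For correctness, let $\overline{X}$ denote the torsion closure of $X$ and assume by induction that the algorithm is correct on all inputs of strictly smaller complexity. Step 2 is immediate from Lemma~\ref{L:covering of torsion points}: that lemma yields $\overline{X} \subseteq \bigcup_{f \in S} \overline{X \cap f^{-1}X}$, while the reverse inclusion is trivial since each $X \cap f^{-1}X \subseteq X$. In step 1a, the subscheme $X$ is a union of $T_0$-cosets, and torsion closure commutes with pullback along $\pi \colon T \to T/T_0$ because the torsion of the connected subtorus $T_0$ is Zariski dense in $T_0$; induction applied to $\pi(X) \subseteq T/T_0$ then yields $\overline{X} = \pi^{-1}\overline{\pi(X)}$. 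In step 1b, I would check that $t_Q X$ is genuinely $\Spec\sigma$-stable (using $\sigma(Q)/Q = P$ together with $X \subseteq f^{-1}X$), so it descends to a $K'$-subscheme $X' \subseteq T'$; torsion closure commutes with the base extension $K' \hookrightarrow K$ and with translation by the torsion element $Q$, giving $\overline{X} = t_{Q^{-1}}(\overline{X'}_K)$. In the reducible sub-case of step 1c, $\overline{X} = \bigcup_i \overline{X_i}$ over the irreducible components $X_i$, handled by induction.

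The crux is the irreducible sub-case of step 1c: one must show that if $X$ is irreducible and stable under some $f = \Spec\tau \circ t_P \circ [2] \in S_3$, then $X_{red}$ is already a torsion coset, so that $X_{red} = \overline{X}$. The morphism $f$ is a finite self-map of $T$ of degree $2^n$, and irreducibility plus dimension counting force $f$ to restrict to a surjective endomorphism of $X_{red}$. As established in the proof of Lemma~\ref{L:covering of torsion points}, the field automorphism $\Spec\tau$ acts on torsion points of orders coprime to $4$ in $T(\CC)$ in the same way as the geometric squaring $[2]$ up to a sign, so on torsion points $f$ acts as an element of $\Aut(\CC)$; this Galois-equivariance of a geometric endomorphism forces any irreducible $f$-stable subvariety to be a torsion coset, by an argument in the spirit of Ihara--Serre--Tate as developed by Ruppert \cite{Ruppert1993}, Beukers and Smyth \cite{Beukers-Smyth2000}, and Aliev and Smyth \cite{Aliev-Smyth2012}.

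The main obstacle will be this last step. The cited prior works typically treat the untwisted situation (no $t_P$ twist) or slightly different setups, so one must verify that the explicit twist by an arbitrary $P \in T[2]$ does not break the argument, perhaps by conjugating $f$ by a translation to absorb $P$. Throughout the induction, care is also needed to keep scheme-theoretic and set-theoretic operations straight, and to track reducedness when asserting that torsion closures commute with the various quotients, descents, and translations invoked in the algorithm.
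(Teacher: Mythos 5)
Your proposal follows the same overall architecture as the paper's proof (induction, reducing each step via standard compatibilities of torsion closures with quotients, translations, and base change, and treating the irreducible case of step~1c as the crux). However, there are two genuine gaps.

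First, the termination argument. The lexicographic tuple $(\dim T,\ [K:\QQ],\ \dim X,\ \#\text{components})$ does not strictly decrease at either step~1a or step~2. In step~1a, $T_0 = \Stab_T(X)$ need only be a nontrivial closed subgroup; it can be finite (the algorithm only guarantees that it contains the nontrivial $2$-torsion point $P$), and then $T/T_0$ is a torus of the \emph{same} dimension, so $\dim T$ does not drop. In step~2, $X \cap f^{-1}X$ is a proper closed subscheme of $X$, but a proper closed subscheme can have the same dimension and the same number of irreducible components as $X$ (think of thickened subschemes with the same reduction but smaller multiplicity), so the last two coordinates of your tuple need not decrease either. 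The paper instead argues termination by structure of the recursion itself: step~1b strictly decreases $[K:\QQ]$; step~1a cannot recur without an intervening step~2 (because the stabilizer of $X/T_0$ in $T/T_0$ is trivial); and once 1a and~1b have occurred for the last time along a branch, steps 1c and~2 form a strictly decreasing chain of closed subschemes of a fixed noetherian scheme. If you want a single well-founded measure it has to encode the length of the remaining noetherian chain, not just dimension and component count.

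Second, the irreducible sub-case of step~1c --- which you correctly identify as the crux --- is left as a sketch with an explicit admission that the twist by $P$ (and the Galois twist by $\tau$) have not been handled. This is exactly where the proof must do real work, and the paper supplies a dedicated statement, Lemma~\ref{L:torsion odd order}. Its proof is short but contains the two ideas you would need: replace $f$ by an iterate to kill $\tau$ (since $\tau$ has finite order and commutes with the geometric pieces up to a translation twist), so that $f$ becomes of the form $t_{P'} \circ [m^k]$ for a torsion point $P'$; then pass to the geometric irreducible components $Z_i$ of $X_{\Kbar}$, observe that a further iterate of $f$ stabilizes each $Z_i$, and invoke Hindry's Lemme~10 from \cite{Hindry1988} to conclude each $Z_i$ is a translate of a subtorus, whence an $f$-stable one must be a torsion coset. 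Your appeal to the Ihara--Serre--Tate circle of ideas (Ruppert, Beukers--Smyth, Aliev--Smyth) is pointed in the right direction conceptually, but without the reduction to $\tau = 1$ and without Hindry's lemma (or an equivalent statement) the argument is incomplete. Finally, a minor point: $T_0 = \Stab_T(X)$ need not be connected, so the phrase ``connected subtorus $T_0$'' is inaccurate, though the conclusion you need (torsion points are Zariski dense in $T_0$) is still true for any closed subgroup of a torus in characteristic zero.
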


\begin{proof}
  We first verify termination.
  It suffices to check that no branch of the recursion
  can proceed to infinite depth.
  In step~1a, $f=t_P$ for some nontrivial $P$, and $P \in T_0$,
  so $T_0 \ne \{1\}$;
  thus step~1a cannot occur twice without an instance of step~2 in between.
  In step~1b, we replace $K$ with a smaller number field;
  thus along a given branch, step~1b cannot occur more than $[K:\QQ]$ times
  without an instance of step~2 in between.
  Finally, along each branch, after steps 1a and~1b occur for the last time,
  steps 1c and~2 can occur only finitely many times since $T$ is noetherian.

  We next verify correctness.
  The reduction in step~2 is valid by
  Lemma~\ref{L:covering of torsion points}.
  The reductions in steps~1a and~1b are valid
  since torsion closures respect field extension
  and pullback by isogenies or translations by torsion points.
  The reduction in the reducible case of step~1c is valid
  since torsion closures can be computed on irreducible components.
  Finally, in the irreducible case of step~1c,
  the reduced subscheme of $X$ equals the torsion closure, by the following lemma.
\end{proof}

\begin{lemma} \label{L:torsion odd order}
  Let $K$ be a number field.
  Fix $\tau \in \Aut K$, a torsion point $P \in T(K)$, and an integer $m \ge 2$.
  Let $f \colon T \to T$ be the $\Q$-morphism
  $x \mapsto (\Spec \tau) \circ t_P \circ [m]$.
  If $X$ is integral and $f(X) \subseteq X$, then $X$ equals its torsion closure.
\end{lemma}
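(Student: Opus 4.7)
The plan is to produce a Zariski-dense set of torsion points inside $X$; by the definition of the torsion closure, this forces $X$ to equal its torsion closure. I would build this dense set out of preperiodic points of the self-map $f|_X \colon X \to X$ (which is well defined since $f(X) \subseteq X$, and is surjective because $X$ is integral and $f$ is finite). Two facts are needed: (i) every preperiodic point of $f$ in $T(\Qbar)$ is a torsion point, and (ii) the preperiodic points of $f|_X$ are Zariski dense in $X$.

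For (i), I would use the naive log-Weil height $h$ on $T = \GG_m^n$. Writing $f(z) = \tilde{\tau}(P z^m)$ for any lift $\tilde{\tau} \in \Aut \CC$ of $\tau$, Galois invariance of $h$ and the vanishing of $h$ on torsion give $h(f(z)) = h(P z^m) = h(z^m) = m\, h(z)$, so iterating, $h(f^k(z)) = m^k h(z)$. If $z$ is preperiodic, its forward orbit is finite, hence $h(f^k(z))$ is bounded; together with $m \ge 2$, this forces $h(z) = 0$, which on $\GG_m^n$ means $z \in \mu^n$.

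For (ii), I would compactify via the coordinatewise embedding $T \hookrightarrow (\PP^1)^n$. Each of $[m]$, $t_P$, and $\Spec \tau$ extends to a $\Q$-morphism of $(\PP^1)^n$, yielding an extension $\bar{f}$ of $f$. A direct check on each $\PP^1$-factor gives $\bar{f}^{\,*}\, \mathcal{O}(1,\ldots,1) \cong \mathcal{O}(1,\ldots,1)^{\otimes m}$, so $\bar{f}$ is a polarized endomorphism of $(\PP^1)^n$ with scaling factor $m \ge 2$. Taking $\bar{X}$ to be the Zariski closure of $X$ in $(\PP^1)^n$, continuity combined with $f(X) \subseteq X$ forces $\bar{f}(\bar{X}) \subseteq \bar{X}$, and the restriction is polarized by the ample line bundle $\mathcal{O}(1,\ldots,1)|_{\bar{X}}$. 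Fakhruddin's theorem on polarized endomorphisms then yields Zariski density of preperiodic points of $\bar{f}|_{\bar{X}}$ in $\bar{X}$; because $X$ is open and dense in $\bar{X}$ and the $\bar{f}$-orbit of any point of $X$ remains in $X$, this descends to Zariski density of preperiodic points of $f|_X$ in $X$ (otherwise, if a proper closed $Z \subsetneq X$ contained them, then $\overline{Z}^{\bar{X}} \cup (\bar{X} \setminus X)$ would be a proper closed subset of $\bar{X}$ containing every preperiodic point, a contradiction).

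The step I expect to require the most care is the application of Fakhruddin's theorem, since $T$ naturally lives over $K$ while $f$ is only a $\Q$-morphism: one must view $\bar{X}$ as a $\Q$-scheme and, if the $\Qbar$-base change is reducible, apply Fakhruddin to an iterate of $\bar{f}$ that preserves each $\Qbar$-irreducible component. A more elementary alternative would be to iterate preimages under the finite surjection $f|_X$ to build backward orbits in $X$ whose heights decay like $h(y)/m^k$, and then invoke Zhang's equidistribution theorem for subvarieties of algebraic tori to conclude directly that $X$ is a torsion coset.
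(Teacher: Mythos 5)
Your argument is correct, but it takes a genuinely different route from the paper's. The paper reduces to $\tau = 1$ by iterating $f$, passes to the geometrically irreducible components $Z_i$ of $X_{\Kbar}$ (again iterating so that each $Z_i$ is $f$-stable), and then cites Hindry's Lemme~10 \cite{Hindry1988} to conclude that each $Z_i$ is a translate of a subtorus; $f$-invariance of $Z_i$ then forces it to be a torsion coset. You instead produce a Zariski-dense set of torsion points in $X$ dynamically: (i) preperiodic points of $f$ in $T(\Qbar)$ are torsion, via $h \circ f = m\cdot h$ and Kronecker's theorem; (ii) preperiodic points of $f|_X$ are dense, via Fakhruddin's theorem for polarized endomorphisms applied to the closure $\bar X \subset (\PP^1)^n$ (with the same component-iterating reduction the paper uses). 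Both routes lean on comparable nontrivial input (Hindry's structure lemma vs.\ Fakhruddin's density theorem), so neither is obviously more elementary; the paper's argument stays inside the torus and identifies the exact structure of $X$ in one stroke, while yours requires the compactification step and the side argument discarding preperiodic points at infinity, but is a more portable dynamical template. Two small remarks. First, the identity $f(z) = \tilde\tau(Pz^m)$ is slightly imprecise as a description of $\Spec\tau$ on $\CC$-points (it is the inverse of the coordinatewise $\Aut\CC$-action, as in the proof of Lemma~\ref{L:covering of torsion points}); this is harmless for the height computation since $h$ is $\Aut\CC$-invariant, but it is cleaner to first pass to an iterate with $\tau=1$, exactly as the paper does, after which $f(z) = Pz^m$ literally. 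Second, your proposed alternative via backward orbits and Zhang/Bilu equidistribution has a gap as sketched: producing a sequence of points with $h \to 0$ does not by itself show the small points are Zariski dense in $X$ (which is what the Bogomolov-type statement needs), so that variant would require an additional density argument.
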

\begin{proof}
  By replacing $f$ by an iterate, we may assume that $\tau=1$.
  Let $Z_1,\ldots,Z_r$ be the irreducible components of $X_{\Kbar}$.
  Since $X$ is integral, the $Z_i$ are Galois conjugates,
  so they have the same dimension.
  Since $f$ is finite, it maps $Z_1$ to some $Z_j$,
  and then the conjugates of $Z_1$ are mapped to the conjugates of $Z_j$,
  so $f$ induces a permutation of $\{Z_1,\ldots,Z_r\}$.
  By replacing $f$ by an iterate, we may assume that $f(Z_i)=Z_i$ for each $i$.
  By \cite{Hindry1988}*{Lemme~10},
  $Z_i$ is a translate of a subtorus of $T_{\Kbar}$.
  Since $f(Z_i)=Z_i$, it must be a torsion coset.
  Thus $X$ equals its torsion closure.
\end{proof}

\begin{remark} \label{R:torsion closures in SageMath}
  We have implemented a variant of Algorithm~\ref{algorithm:torsion closure} in SageMath.
  To speed up the algorithm, we incorporated the following modifications:
  \begin{itemize}
    \item When we detect that the defining ideal of $X$ contains a univariate polynomial, we factor this polynomial (after enlarging $K$ suitably) so that we can reduce the dimension of $T$.
    \item When applying step~1a, we first check whether there exists a positive-dimensional subtorus $T_1$ of $T$ such that $X$ arises by pullback from $T/T_1$.
          If so, we use $T_1$ in place of $T_0$; otherwise, we use
          $\langle t_P \rangle$ in place of $T_0$. This does not affect termination.
    \item When applying step~2 to $f \in S_3$,
          at the next level of recursion we replace $S$ by $\{f\}$.
          This does not affect correctness because
          this branch of the recursion needs to account for
          only torsion points of order not divisible by 4.
    \item We sometimes cut down $X$ based on its reduction modulo 2,
          as in Section~\ref{sec:mod 2 relations}.
  \end{itemize}
  In experiments,
  we compute torsion closures easily when $\dim X \leq 1$,
  with difficulty when $\dim X = 2$, and not at all when $\dim X \geq 3$.
  Our use of Gr\"obner bases makes it difficult to analyze the running time,
  but a similar algorithm using resultants was analyzed in \cite{Aliev-Smyth2012};
  its complexity is superexponential in the number of variables.
\end{remark}

\section{Low order solutions to the Gram determinant equation}
\label{sec:C code}

In this section, we prove the following statement.
\begin{proposition} \label{P:low order solutions}
  For $N \in \{48, 90, 120, 132, 168, 280, 420\}$, every $4$-line configuration with angles in $\ZZ \pi/N$ appears in some configuration accounted for in Theorem~\ref{T:main}.
\end{proposition}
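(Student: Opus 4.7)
The plan is to verify the proposition by a direct finite enumeration, carried out in \cplusplus{} with exact arithmetic. For each $N$ in the given set, by Corollary~\ref{C:realizable locus} an angle matrix $\Theta = (\theta_{ij}) \in \M_4(\R)^{\sym}_0$ with entries in $\pi\ZZ/N$ represents a realizable $4$-line configuration if and only if $\Theta \in \calP$ (the spherical triangle inequalities of Remark~\ref{R:triangle inequalities}) and the $4\times 4$ Gram determinant in \eqref{eq:gram det 4x4} vanishes. Both conditions can be tested exactly, since every cosine $\cos(k\pi/N)$ lies in $\ZZ[\zeta_{2N}]$ and the Gram determinant, expanded in the power basis of $\zeta_{2N}$, can be compared to $0$ coordinate by coordinate over $\ZZ$.

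The naive search space has size $(N+1)^6$, which is about $5 \times 10^{15}$ when $N = 420$; to make the enumeration feasible I would combine several reductions. First, quotient by the $\Regge^\pm$-action of Section~\ref{section:Regge symmetry}, of order $2^7 \cdot 3^2 = 1152$, which preserves both $\calP$ and $\calH$, and enumerate only a lex-minimal representative of each orbit. Second, exploit that $\calP \intersect \calH$ is $5$-dimensional by iterating over only five angles and solving the Gram equation for the sixth: once five cosines are fixed, \eqref{eq:gram det 4x4} is a polynomial of degree at most $2$ in the remaining cosine, so its roots can be read off and tested for membership in $\{\cos(k\pi/N) : 0 \le k \le N\}$ in essentially constant time. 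Third, as an early-abort filter, discard any partial tuple whose Gram sum, after reduction modulo~$2$, already fails to be an integer combination of the minimal indecomposable mod~$2$ relations classified in Section~\ref{sec:mod 2 relations} (Theorem~\ref{T:sums of 12 mod 2}); this is the mod~$2$ analogue of the pruning used elsewhere in the paper, and eliminates the overwhelming majority of partial tuples without ever forming a full Gram determinant.

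Having produced a finite candidate list for each $N$, I would canonicalize each matrix under $\Regge^\pm$ and match it against the master list of $84696$ families and sporadic examples compiled for Theorem~\ref{T:main}. The main obstacle is expected to be the case $N=420$: the degree $\varphi(840) = 192$ of $\ZZ[\zeta_{840}]$ makes exact arithmetic relatively expensive, so even the pruned $5$-parameter sweep will require careful engineering (fixed-width modular representatives for $\ZZ[\zeta_{2N}]$, hashed orbit canonicalization, aggressive branch-and-bound on the triangle inequalities) to finish in reasonable wall time; the other six values of $N$ are substantially smaller and will be dispatched by the same code without further effort. As an algebraic certification of the numerical output, each candidate would be re-verified in Magma or SageMath by an exact Gram determinant computation and by exhibiting it as a principal submatrix of one of the configurations enumerated in Table~\ref{Table:number of configurations}, completing the proof.
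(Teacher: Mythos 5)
Your overall plan---a direct finite enumeration for each listed $N$, followed by matching survivors against the tabulated configurations---is the same strategy the paper pursues in Section~\ref{sec:C code}, but your implementation departs from the paper's in one key respect: you propose exact arithmetic in $\Z[\zeta_{2N}]$ in the inner loop, whereas the paper runs the $(N-1)^5$-fold loop in double precision, re-tests the small set of near-solutions in quad-double precision, and only certifies the handful of survivors algebraically. That ordering is not cosmetic: it is what makes $N=420$ (where $\varphi(840)=192$) tractable in a day of CPU time. The paper pays for the floating-point shortcut with the error analysis of Lemma~\ref{L:solutions up to 420} to exclude false negatives; your exact approach dodges that analysis but makes each iteration roughly three orders of magnitude slower, and you do not estimate whether your proposed offsets (modular representatives, hashed canonicalization, branch-and-bound on $\calP$) buy back that factor. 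So the proposal is a genuinely different route, but its feasibility for the dominating case is asserted rather than argued.

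More concretely, two of your proposed speedups do not work as stated. First, you cannot quotient the enumeration by the full $\Regge^\pm$-action of order $1152$ while restricting to angles in $\Z\pi/N$: the Regge operator $r_{12,34}$ involves the half-sum $s=(\theta_{13}+\theta_{24}+\theta_{14}+\theta_{23})/2$, so when the corresponding integers $m_{ij}$ have odd sum it carries $(\Z\pi/N)^6$ into $(\Z\pi/(2N))^6 \setminus (\Z\pi/N)^6$; a lex-minimal orbit representative may therefore have no entries in $\Z\pi/N$ at all and would be silently skipped. Only the $S_4^\pm$-action (and the paper in fact uses only a partial $S_4$ reduction via \eqref{eq:m_ij inequalities}) preserves $\Z\pi/N$. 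Second, the mod~$2$ ``early abort'' filter is not well-defined for a partial tuple: the classification in Theorem~\ref{T:sums of 12 mod 2} applies to the full $12$-term cosine sum of \eqref{eq:mod 2 cosine condition}, four of whose terms involve the angle you have not yet chosen, so there is no Gram sum to reduce mod~$2$ until the sixth angle is fixed---and by then you have already solved the quadratic, so the remaining check is a cheap substitution, not a mod~$2$ test. The paper uses the mod~$2$ machinery only in the algebraic classification of Section~\ref{sec:solutions to Gram equation}, never as a pruning rule in the numerical sweep. Neither issue is unrecoverable---drop Regge in favour of $S_4^\pm$, drop the mod~$2$ pruning---but as written the proposal both overestimates the orbit savings and invokes a filter that does not apply, leaving the $N=420$ case, which the paper flags as the bottleneck even with floating-point arithmetic, without a feasibility argument.
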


To prove this, we make a rigorous computation of the solutions
$\Theta = (\theta_{12},\theta_{34},\theta_{13},\theta_{24},\theta_{14},\theta_{23}) \in \M_4(\R)^{\sym}_0$
to \eqref{eq:gram det 4x4}
with $\theta_{ij} \in \Z\pi/N$.
This computation combines numerical and algebraic methods.
In Section~\ref{sec:solutions to Gram equation},
a separate computation will show that these account for all solutions
outside of some specific families.

Write $\theta_{ij} = m_{ij} \pi/N$ with $m_{ij} \in \{1,\ldots,N-1\}$.
By exploiting $S_4$-symmetry, we may assume
\begin{align}  \label{eq:m_ij inequalities}
  \begin{split}
    m_{14} + m_{23} \; \leq \; m_{13} &+ m_{24} \; \leq \; m_{12} + m_{34}, \\
    m_{34} \; &\leq \; m_{12}, \\
    m_{24} \; &\leq \; m_{13};
  \end{split}
\end{align}
but we cannot also assume $m_{23} \leq m_{14}$.
The plan is to loop over $m_{12}, m_{34}, m_{13}, m_{24}, m_{14}$;
then \eqref{eq:gram det 4x4} expresses $\cos \theta_{23}$ as a root of
a quadratic equation,
so we can numerically solve for the possibilities for $m_{23}$,
rounding them to the nearest integer, and carry out three tests on the resulting $6$-tuple:
\begin{enumerate}[\phantom{mm}\upshape i)]
  \item We test whether~\eqref{eq:gram det 4x4} holds to within $10^{-11}$
        using \cplusplus\ and double precision arithmetic. (In a few cases, this requires computing $m_{23}$ using more working precision; see below.)

  \item If the first test passes,
        we test whether~\eqref{eq:gram det 4x4} holds to within $10^{-50}$
        using Bailey's \cplusplus\
        quad-double package~\cite{cite:bailey} (approximately 65 decimal digits of working precision).

  \item If the second test passes,
        we rigorously verify \eqref{E:105} and hence~\eqref{eq:gram det 4x4}
        by an algebraic computation in $\QQ(\zeta_{2N})$ in SageMath.
\end{enumerate}
To save time, we precompute the values of
$\cos(m\pi/N)$ and $\cos^2(m\pi/N)$, in both double and quad-double precision, for all $m<N$.

The first and second test were run on 
a MacBook Pro with a 2.9GHz Intel Core i7 CPU.
The case of $N=420$ dominated the time needed
and took one day of computation using one core of the CPU.
Most tuples were ruled out by the first test, so the second and third tests
took a negligible amount of time.

While the third test, being algebraic, confirms rigorously that we have no false positives, we must do some analysis to rule out false negatives. We state this in the form of a lemma.

\begin{lemma} \label{L:solutions up to 420}
  For $N \leq 420$, any tuple satisfying
  \eqref{eq:gram det 4x4} and~\eqref{eq:m_ij inequalities}
  passes tests 1 and~2.
\end{lemma}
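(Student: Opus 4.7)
The plan is to verify separately that (a) for each fixed 5-tuple $(m_{12},m_{34},m_{13},m_{24},m_{14})$ the rounding step of the search identifies every integer $m_{23}$ that together with these five values satisfies \eqref{eq:gram det 4x4}, and (b) once identified, the resulting 6-tuple passes tests~1 and~2. Since \eqref{eq:gram det 4x4} is a quadratic in $c \colonequals \cos\theta_{23}$, part~(a) reduces to showing that the numerical roots $\tilde c_1,\tilde c_2$ computed in double precision are close enough to the exact roots that, when each is mapped to the nearest integer $m \in \{1,\dots,N-1\}$ using the precomputed table $\{\cos(k\pi/N)\}$, the correct value of $m_{23}$ is recovered.

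The key quantitative input for~(a) is a lower bound on the minimum gap
\[
\delta(N) \colonequals \min_{1\le k<k'\le N-1}\bigl|\cos(k\pi/N)-\cos(k'\pi/N)\bigr|.
\]
By symmetry and concavity of $\cos$ on $[0,\pi/2]$, the minimum is attained at $(k,k')=(1,2)$, giving
\[
\delta(N)=\cos(\pi/N)-\cos(2\pi/N)=2\sin\!\bigl(\tfrac{3\pi}{2N}\bigr)\sin\!\bigl(\tfrac{\pi}{2N}\bigr)\ge 6/N^2
\]
via the Jordan inequality $\sin x\ge (2/\pi)x$ on $[0,\pi/2]$; so $\delta(420)\ge 3\cdot 10^{-5}$. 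The coefficients of the quadratic are bounded-degree polynomials in the five precomputed cosines, each stored with relative error at most $\epsilon_{\mathrm{mach}}\approx 2.2\cdot 10^{-16}$ in double precision. A standard forward-error analysis of the quadratic formula then bounds $|\tilde c_i-c_i|$ by a modest multiple of $\epsilon_{\mathrm{mach}}$ unless the discriminant is anomalously small; this is orders of magnitude below $\delta(N)/2$ for $N\le 420$, so the nearest-integer map cannot cross into the neighborhood of a wrong $m$.

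The sole delicate case is when the numerical discriminant falls below a safety threshold and double precision loses significance; in that case the code falls back to Bailey's quad-double package, which provides roughly $65$ decimal digits of working precision and resolves any ambiguity trivially at $N\le 420$. Part~(b) is then immediate: with the correct 6-tuple in hand, tests~1 and~2 evaluate $\det\bigl(\cos(m_{ij}\pi/N)\bigr)_{i,j}$ as a fixed-size arithmetic circuit over precomputed cosine values; forward-error analysis bounds the numerical residue by $C\cdot\epsilon_{\mathrm{mach}}$ for a small explicit constant $C$, comfortably below $10^{-11}$ in double precision and below $10^{-50}$ in quad-double.

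The main obstacle is the near-degenerate-discriminant case in~(a): one must fix an explicit safety threshold $\tau$, verify via a condition-number bound that double-precision root extraction is trustworthy whenever the computed discriminant exceeds $\tau$, and check that the quad-double fallback separates every pair of distinct table entries for $N\le 420$. The latter is immediate since $\delta(N)\ge 6/N^2$ vastly exceeds the quad-double error bound. Once these bookkeeping steps are carried out, the lemma follows.
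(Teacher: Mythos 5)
Your high-level plan---splitting into (a) ``the rounding step recovers the correct $m_{23}$'' and (b) ``the residual tests then pass''---matches the paper's structure, and the observation that the minimum gap $\delta(N)\ge 6/N^2$ between table entries gives a rounding buffer is a clean way to formulate (a). But the key step of (a) has a genuine gap. You claim that a forward-error analysis of the quadratic formula bounds $|\tilde c_i - c_i|$ by ``a modest multiple of $\epsilon_{\mathrm{mach}}$ unless the discriminant is anomalously small,'' and you identify a small discriminant as the sole delicate case requiring a quad-double fallback. That is the wrong failure mode. The paper's actual analysis shows that the dangerous case is when the leading coefficient $A=\cos^2\theta_{14}-1$ is small (i.e.\ $\theta_{14}$ near $0$ or $\pi$): the denominator $|2A|$ can be as small as $2(1-\cos^2(\pi/420))\approx 1.1\times 10^{-4}$, and the division magnifies absolute errors by a factor on the order of $(|\alpha|+|\beta|)/|\beta|^2$, which can reach about $10^9$. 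This magnification has nothing to do with the discriminant; it occurs whenever $|A|$ is small and the numerator $\alpha=-B\pm\sqrt{B^2-4AC}$ is not. Multiplying $10^9$ by the roughly $10^{-15}$ absolute accuracy of the precomputed cosines gives errors in $\tilde c$ on the order of $10^{-6}$, which is the same order as $\delta(420)/2\approx 1.7\times 10^{-5}$---not ``orders of magnitude below'' it. That tightness is precisely why the paper's code explicitly monitors the condition number $(|\alpha|+|\beta|)/|\beta|^2$ and recomputes $m_{23}$ in quad-double precision whenever it reaches $10^8$ (which happens for about $1/2000$ of cases). Your proposal, by flagging only small-discriminant tuples, would declare the double-precision computation safe in exactly the cases where it may not be.

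A secondary (and more minor) mismatch: you model the rounding step as choosing the nearest entry in the cosine table, so your threshold is $\delta(N)/2$. The code actually computes $m_{23}=(N/\pi)\arccos(\tilde c)$ and rounds to the nearest integer, which introduces an additional magnification factor $(N/\pi)(1-x^2)^{-1/2}$ that the paper bounds separately (up to about $18000$ for $N=420$). The two formulations are morally equivalent, but if you want the argument to certify the code as written you need to track the arccos step and its error amplification near $x=\pm 1$, rather than compare directly to $\delta(N)$.
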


\begin{proof}
  We begin with some observations about the accuracy of underlying floating-point arithmetic in our computations.
  Note that \cplusplus\ doubles correspond to IEEE-754 doubles, with 52 of 64 bits devoted to the mantissa,
  i.e., at least 15 decimal digits. Moreover, we compared the cosine values in doubles to quad doubles, finding agreement to within $10^{-15}$; this ensures the accuracy of the cosines in doubles to more than 50 bits.

  For a 6-tuple eliminated in the first test,
  we have to rule out a relative error of greater than $10^{-4}$
  (since the cosines are accurate to 15 decimal places, whereas only 11 decimal places are used to distinguish the
  determinant~\eqref{eq:gram det 4x4} from zero).
  The computation of the determinant from the matrix entries
  involves a few dozen multiplications and
  additions of cosines that are bounded in size by 1, and thus is quite safe
  \emph{provided} that $m_{23}$ is correctly computed from the other values.
  That is, let $\cos \theta_{23} $ be a root
  of $A x^2 +Bx + C$ with $A=\cos^2 \theta_{14} - 1$ and $B$ and $C$ given by more complicated expressions in the five other cosines;
  we must ensure that when we solve the quadratic equation
  to obtain
  \begin{equation} \label{eq:cosine as root of quadratic}
    m_{23} = \frac{N}{\pi} \arccos \frac{-B \pm (B^2 -4AC)^{1/2}}{2A},
  \end{equation}
  we are guaranteed to obtain $m_{23}$ to within $0.5$.
  In general this will not yield an integer, but we nonetheless round the computed value to the nearest integer and then test whether \eqref{eq:gram det 4x4} holds. (In most cases this is redundant because the true value of $m_{23}$ is not an integer, but this extra step takes negligible time due to our use of precomputed values, as described above.)

  We analyze the numerical stability of \eqref{eq:cosine as root of quadratic} by stepping through the computation.
  The denominator, $2A = 2(\cos^2 \theta_{14}-1)$, can be as small as $2(\cos^2(\pi/420)-1) = 0.00011\cdots$.
  Let $\alpha = -B \pm (B^2-4AC)^{1/2}$ and $\beta=2A$,
  and let $\alpha+\Delta_1, \beta+\Delta_2$ be the numerical
  values computed for $\alpha, \beta$; then
  \[
    \frac{\alpha}{\beta} - \frac{\alpha+\Delta_1}{\beta+\Delta_2} = \frac{\alpha \Delta_2 - \beta\Delta_1}{\beta(\beta+\Delta_2)}.
  \]
  The factor
  $\alpha \Delta_2/\beta^2$ can act to magnify the error;
  for $\alpha \approx 10$ and $\beta = 0.00011\cdots$, this is roughly $10^9$. Additionally, taking
  the arccos introduces a further factor of
  $(N/\pi) (1-x^2)^{-1/2}$ to the error, coming from the mean value theorem
  applied to $(N/\pi)\arccos(x)$; in the worst case $N = 420$ and $x \approx \pi/420$, this yields a factor of $\approx 18000$.
  We conclude that in a few cases, we may lose more than 13 decimal digits of accuracy; however, if
  $(|\alpha|+|\beta|)/|\beta|^2 < 10^8$, the previous analysis guarantees that $m_{23}$ is safely computed correctly using double precision.
  In the remaining cases, we recomputed $m_{23}$ in quad-double precision to confirm its value; in practice we only had to resort to quad doubles for this step, in total, for less than $1/2000$ of the cases examined, and this had a negligible impact on the overall runtime.
\end{proof}

After determining whether a given $6$-tuple 
$(m_{12}, m_{34}, m_{13}, m_{24}, m_{14}, m_{23})$
is a solution to~\eqref{eq:gram det 4x4}, we further check
whether condition 1 in Proposition~\ref{P:realizability of angle matrices}
holds. By Remark~\ref{R:1 and 2 minors}, this is the same as checking that the four
$3 \times 3$ principal minors of the Gram matrix in~\eqref{eq:gram det 4x4} are nonnegative.
To do so, we numerically compute the four minors using quad-double precision (65 digits precision), and declare each one to be nonnegative if its computed value is
greater than $-10^{-50}$; the following lemma shows that this test is rigorous.

\begin{lemma} \label{L:lower bound on cofactors}
  For a solution 
  $(m_{12}, m_{34}, m_{13}, m_{24}, m_{14}, m_{23})$
  of~\eqref{eq:gram det 4x4}
  with $0 < m_{ij} < N \leq 420$,
  if some $3 \times 3$ principal minor of the Gram matrix is nonzero, then its absolute
  value is greater than $10^{-50}$.
\end{lemma}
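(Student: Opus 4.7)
The plan is to invoke the classical factorization of the $3\times 3$ Gram determinant as a product of four sines: for any angles $a,b,c$,
\[
  1 - \cos^2 a - \cos^2 b - \cos^2 c + 2 \cos a \cos b \cos c \;=\; 4 \sin(s)\sin(s-a)\sin(s-b)\sin(s-c),
\]
where $s=(a+b+c)/2$. This identity can be verified directly by two applications of the product-to-sum formula $2\sin\alpha\sin\beta = \cos(\alpha-\beta)-\cos(\alpha+\beta)$, or recognized as the square of the spherical analogue of Heron's formula.

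I would apply this with $a = m_{ij}\pi/N$, $b = m_{ik}\pi/N$, $c = m_{jk}\pi/N$ for the three indices $\{i,j,k\}$ determining the minor. Then each of $s, s-a, s-b, s-c$ is of the form $\ell\pi/(2N)$ for some integer $\ell$ with $|\ell|<3N$, so the minor is $4$ times a product of four sines at integer multiples of $\pi/(2N)$. In particular, the minor vanishes if and only if one of these four integers is $\equiv 0 \pmod{2N}$.

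For an integer $\ell$ with $\ell\not\equiv 0\pmod{2N}$, the quantity $|\sin(\ell\pi/(2N))|$ attains its minimum value $\sin(\pi/(2N))$ at $\ell\equiv\pm 1\pmod{2N}$. So whenever the minor is nonzero, each of the four sine factors is bounded below in magnitude by $\sin(\pi/(2N))$, giving
\[
  |\text{minor}| \;\geq\; 4\sin^4\!\bigl(\pi/(2N)\bigr).
\]
Combining this with the elementary inequality $\sin x > 2x/\pi$ on $(0,\pi/2)$ yields $|\text{minor}| > 4/N^4 \geq 4/420^4 > 10^{-10}$, exceeding the required $10^{-50}$ by more than forty orders of magnitude.

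There is no real obstacle in this plan: the only non-trivial ingredient is the Gram-sine factorization identity, whose verification is a handful of lines. The striking gap between the proved lower bound and the claimed bound $10^{-50}$ shows that the numerical test in Section~\ref{sec:C code} enjoys substantial safety margin, and the argument would remain valid for values of $N$ far larger than $420$ if that were ever needed.
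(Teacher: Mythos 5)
Your proof is correct and uses exactly the same key identity as the paper: the factorization of the $3\times 3$ Gram determinant into $4$ times a product of four sines of half-angle sums and differences, combined with the observation that each sine argument is an integer multiple of $\pi/(2N)$ and hence each nonzero factor is bounded below by $\sin(\pi/(2N))$. Your added step of passing to the cleaner explicit bound $4/N^4$ via $\sin x > 2x/\pi$ is a minor cosmetic refinement over the paper's direct numerical evaluation, not a different argument.
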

\begin{proof}
  Without loss of generality, consider the top left $3 \times 3$ minor; it is
  $1+2\cos \theta_{12} \cos \theta_{13} \cos \theta_{23}
    -\cos^2 \theta_{12} -\cos^2 \theta_{13} - \cos^2 \theta_{23} $,
  which equals
  \begin{equation} \label{E:3 by 3 minor factorization}
    4
    \sin \frac{\theta_{12} + \theta_{13} + \theta_{23}}{2}
    \sin \frac{\theta_{12} + \theta_{13} - \theta_{23}}{2}
    \sin \frac{\theta_{12} - \theta_{13} + \theta_{23}}{2}
    \sin \frac{-\theta_{12} + \theta_{13} + \theta_{23}}{2}.
  \end{equation}
  One can verify this identity using trigonometric identities, or by writing $\cos(t)$ as $(z+1/z)/2$ with $z=\exp(\cplxi t)$ and factoring the corresponding Laurent polynomial, but it is suggested
  by noticing that the above expression vanishes when any of the inequalities in Remark~\ref{R:triangle inequalities} are equalities.
  For $N \leq 420$, each factor, if nonzero, has absolute value at least
  $\sin(\pi/(2N)) \geq 0.0074799\cdots$.
  This yields the desired bound by a wide margin (even $10^{-10}$ would suffice).
\end{proof}

\begin{remark}
Equation~\eqref{E:3 by 3 minor factorization} also gives 
the square of the volume of a parallelepiped formed by three unit vectors
in terms of the angles between them.
\end{remark}

By Proposition~\ref{P:realizability of angle matrices},
any solution whose Gram matrix has nonnegative $3 \times 3$ principal minors
is realized by 4 vectors in $\RR^3$.
We then sort solutions according to whether the underlying 4 vectors are:
\begin{enumerate}
  \item[(i)]
        all lying on one plane;
  \item[(ii)]
        3 vectors lying on a plane and one vector perpendicular to that plane;
  \item[(iii)]
        3 vectors lying on a plane with the origin in their convex hull,
        and the fourth vector neither on nor perpendicular to the plane;
  \item[(iv)]
        outward normals to the faces of a tetrahedron;
  \item[(v)]
        none of the above.
\end{enumerate}
To test for condition (i), we check whether all four $3 \times 3$ principal minors
are zero (implying that any three of the vectors are coplanar).
Using Lemma~\ref{L:lower bound on cofactors} again, we can test this rigorously by computing in quad-double precision and verifying that their absolute values are less than $10^{-50}$.

To test for conditions (ii) and (iii), we first verify that exactly one of the four $3 \times 3$ principal minors is zero (and the other three positive), again using Lemma~\ref{L:lower bound on cofactors}; if so, then we must have 3 vectors lying on a plane and the fourth not lying on the plane.
We then test for (ii) and (iii) respectively by checking whether the values of $m_{ij}$ corresponding to angles including (respectively, not including) the fourth vector are all equal to $N/2$ (respectively, sum up to $N$).

To test for condition (iv), we first verify that all four $3\times 3$ principal minors are positive, again using Lemma~\ref{L:lower bound on cofactors}, to ensure that the 4 vectors are in linear general position. In this case, condition (iv) asserts that the unique (up to scalar combination) vanishing linear combination of the 4 vectors has coefficients all of the same sign; using Cramer's rule, we check this by
computing the signs of the $3 \times 3$ nonprincipal minors. For this, we need
an analogue of Lemma~\ref{L:lower bound on cofactors} to reduce to a computation in quad doubles.

\begin{lemma} \label{L:lower bound on cofactors2}
  For a solution 
  $(m_{12}, m_{34}, m_{13}, m_{24}, m_{14}, m_{23})$
  of~\eqref{eq:gram det 4x4}
  with $0 < m_{ij} < N \leq 420$,
  if all diagonal cofactors of the Gram matrix are positive, 
  then every off-diagonal cofactor has absolute value greater than $10^{-50}$.
\end{lemma}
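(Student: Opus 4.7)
The plan is to exploit the rank-one structure of the adjugate of a symmetric matrix of corank one, and then invoke Lemma~\ref{L:lower bound on cofactors} on the diagonal cofactors to bound the off-diagonal ones.

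First, let $C=(\cos\theta_{ij})$ be the $4\times 4$ Gram matrix. By hypothesis, $\det C=0$ and all four $3\times 3$ principal minors are positive; in particular, some $3\times 3$ submatrix is nonsingular, so $C$ has rank exactly $3$. Hence the cofactor/adjugate matrix $\Gamma\colonequals\operatorname{adj}(C)$ satisfies $C\Gamma=\Gamma C=(\det C)I=0$, and $\Gamma$ has rank one since $\ker C$ is one-dimensional. Moreover $\Gamma$ is symmetric because $C$ is.

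Next, since $\Gamma$ is a symmetric rank-one $4\times 4$ matrix with strictly positive diagonal, we may write $\Gamma=\lambda\,vv^{T}$ for some $\lambda>0$ and some $v\in\R^4$ with every coordinate nonzero. The diagonal entries $\Gamma_{ii}=\lambda v_i^2$ are exactly the $3\times 3$ principal minors of $C$, and the off-diagonal entries $\Gamma_{ij}=\lambda v_i v_j$ are (up to sign) the off-diagonal cofactors we need to bound. The rank-one identity then gives
\begin{equation*}
  \Gamma_{ij}^{\,2}\;=\;\Gamma_{ii}\,\Gamma_{jj}\qquad (i\neq j).
\end{equation*}

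Finally, by hypothesis each $\Gamma_{ii}>0$, so each $\Gamma_{ii}$ is a nonzero $3\times 3$ principal minor of $C$ arising from a solution of \eqref{eq:gram det 4x4} with $0<m_{ij}<N\le 420$. Lemma~\ref{L:lower bound on cofactors} therefore gives $\Gamma_{ii}>10^{-50}$ for every $i$. Taking the square root of the displayed identity yields
\begin{equation*}
  |\Gamma_{ij}|\;=\;\sqrt{\Gamma_{ii}\,\Gamma_{jj}}\;>\;\sqrt{10^{-50}\cdot 10^{-50}}\;=\;10^{-50},
\end{equation*}
which is the desired bound. There is no real obstacle here: the only input beyond elementary linear algebra is Lemma~\ref{L:lower bound on cofactors}, which is already available; the rank-one structure of $\operatorname{adj}(C)$ converts the off-diagonal bound directly into the previously established diagonal bound.
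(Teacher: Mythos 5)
Your proof is correct. It reaches the same key identity as the paper, namely that the squared off-diagonal cofactor equals the product of the two corresponding diagonal cofactors, but by a different route. The paper invokes Proposition~\ref{P:realizability of angle matrices} to realize the Gram matrix as $C=B^{T}B$ for a $3\times 4$ matrix $B$ of unit column vectors, observes that the $(i,j)$ cofactor equals $(-1)^{i+j}\det(B_i)\det(B_j)$ where $B_i$ is $B$ with column $i$ deleted, and then reads off the identity since the $(i,i)$ cofactor is $\det(B_i)^2$. You instead work purely algebraically: $\det C=0$ together with a nonzero $3\times 3$ principal minor forces $\operatorname{rank} C=3$, hence $\operatorname{adj}(C)$ is a symmetric rank-one matrix, and the identity $\Gamma_{ij}^2=\Gamma_{ii}\Gamma_{jj}$ follows at once. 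Your route is slightly cleaner in that it avoids the geometric realization step (and thus sidesteps the need to check that $C$ is positive semidefinite, which the paper's invocation of Proposition~\ref{P:realizability of angle matrices} implicitly requires); the paper's route has the advantage of giving an explicit factorization of every cofactor with its sign, which is used elsewhere (e.g.\ in the test for condition~(iv) via Cramer's rule). Either derivation, combined with Lemma~\ref{L:lower bound on cofactors} applied to each diagonal cofactor, gives the stated bound.
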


\begin{proof}
  Consider four unit vectors in $\RR^3$ given by Proposition~\ref{P:realizability of angle matrices}, and form a $3 \times 4$ matrix $B$ with these as column vectors. Let $B_j$ denote the $3 \times 3$ matrix obtained from $B$ by removing column $j$.
  Then the $3 \times 3$ submatrix of the Gram matrix obtained by removing row $i$ and
  column $j$ equals $B_i^T B_j$, so the corresponding cofactor
  equals $(-1)^{i+j}\det(B_i) \det(B_j)$. Up to sign, this is the geometric mean of two diagonal cofactors of the Gram matrix; we may thus deduce the claim directly from
  Lemma~\ref{L:lower bound on cofactors}.
\end{proof}

With this classification in hand,
we discard solutions of type (i) and (ii) as trivial cases.
We further filter solutions of type (iii) and (iv) for solutions
in a known parametric family. (We ignore solutions of type (v); any such solution arises from a solution of type (iii) or (iv) by negating one or more vectors.)
The remaining solutions are all accounted for by Theorem~\ref{T:main}, so
the proof of Proposition~\ref{P:low order solutions} is complete.

\begin{remark}
  Since it did not take much extra effort, we ran our code for all $N \leq 280$ and $N=420$.
  The extra values of $N$ provide a sanity check for the correctness of the implementation.
  As an additional sanity check, we
  use the unfiltered solutions of types (iii) and (iv) to experimentally find two-parameter solutions, and then one-parameter solutions not contained in a two-parameter solution. These agree with the solutions computed algebraically as described in Section~\ref{sec:solutions to Gram equation}.
  We do this by by looping over all triples of solutions to~\eqref{eq:gram det 4x4} found across several stretches of $N$ (such as $N<100$). Any such triple determines
  a plane in $\RR^6$. We then select 5 random points on each plane and test
  whether equation~\eqref{eq:gram det 4x4} holds for all five points, to within $10^{-11}$. If it does, we declare the three points to be part of a two-parameter family of solutions, and then confirm that the family matches one of the two-parameter families found algebraically (conversely, every two-parameter family found algebraically was confirmed in this fashion).
  Using exact rational arithmetic, we remove all solutions from our list that are on that plane. After exhausting all triples, we repeat the process with all remaining pairs of solutions to experimentally determine the one-parameter families of solutions, and verify that they
  match those found in Section~\ref{sec:solutions to Gram equation}. The remaining solutions are thus the sporadic solutions. The sporadic solutions of type (iv) are listed in Table~\ref{Table:tetrahedra}.

\end{remark}

%****************************************************************************
\section{The 4-line configurations}
\label{sec:solutions to Gram equation}

In this section, we prove the following result in the direction of
Theorem~\ref{T:main}, then use this to deduce Theorem~\ref{T:tetrahedra}.

\begin{theorem} \label{T:4 lines classification}
  Every rational-angle $4$-line configuration, up to equivalence, is contained in
  one of the configurations indicated in Theorem~\ref{T:main}.
\end{theorem}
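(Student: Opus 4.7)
The plan is to explicitly compute the torsion points on the hypersurface $Y \subset T$ of Section~\ref{section:algebraic tori}. By the equivalence of problems listed there and the cartesian diagram~\eqref{E:actions}, every rational-angle $4$-line configuration corresponds to such a torsion point, modulo imposing the triangle inequalities of Remark~\ref{R:triangle inequalities} cutting out $\calP$. Since $Y$ is stable under $W(D_6)$, which contains the image of $\Regge^{\pm}$, it suffices to determine $Y(\C) \cap T(\C)_{\tors}$ up to $W(D_6)$-orbits and check the $\calP$-inequalities only at the end.

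The crucial first step is mod~$2$ reduction of~\eqref{E:105}. All monomials with even coefficients vanish, leaving only the $12$ monomials in the $S_4$-orbit of $z_{12}^{\pm 2}z_{34}^{\pm 2}$. For any torsion solution, the nonzero images of these $12$ monomials form a mod~$2$ cyclotomic relation of weight at most $12$; by Theorem~\ref{T:sums of 12 mod 2} and Lemma~\ref{L:conjugation-stable mod 2 relations}, such a relation decomposes into indecomposables drawn from (the mod~$2$ extension of) Table~\ref{table:sums of cosines}. Each decomposition yields a parametric family of candidate $6$-tuples depending on at most three auxiliary parameters in $\mu$. Substituting each parametrization back into~\eqref{E:105} produces an honest polynomial equation in at most three variables constrained to $\mu$, one for each decomposition.

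To each of these lower-dimensional equations I would apply Algorithm~\ref{algorithm:torsion closure}, with the efficiency modifications of Remark~\ref{R:torsion closures in SageMath} (including the mod~$2$ cut and stabilizer reduction). The output is a finite union of torsion cosets; positive-dimensional components are matched against the families listed in Theorem~\ref{T:main} and Table~\ref{Table:number of configurations}, while the remaining isolated sporadic torsion points are shown, by tracking the primes $\{2,3,5,7,11\}$ that can appear in indecomposable mod~$2$ relations of weight $\leq 12$ and the allowed combinations thereof, to have coordinates whose orders divide some $N \in \{48, 90, 120, 132, 168, 280, 420\}$. Proposition~\ref{P:low order solutions} then certifies that every such low-order solution is contained in a configuration from Theorem~\ref{T:main}. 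Discarding those $4$-line configurations that violate the triangle inequalities in $\calP$ yields exactly the families of Theorem~\ref{T:main}.

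The main obstacle is the combinatorial blow-up in the second step. Even after $W(D_6)$-orbit reduction the number of parametric families produced by the mod~$2$ decomposition is in the thousands, and each one triggers a call to Algorithm~\ref{algorithm:torsion closure}, whose complexity is superexponential in the number of variables (Remark~\ref{R:torsion closures in SageMath}). The pipeline is only feasible because of the interplay between the three tools: without the mod~$2$ reduction of Section~\ref{sec:mod 2 relations}, one faces a six-variable, $105$-monomial problem out of reach of either the Conway--Jones-style enumeration of Section~\ref{sec:cyclotomic relations} or a direct torsion-closure computation; and without the numerical certificate of Proposition~\ref{P:low order solutions}, the recursion in Algorithm~\ref{algorithm:torsion closure} applied to the residual low-order cases would be impractical to complete rigorously.
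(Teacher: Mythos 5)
Your proposal reproduces the paper's strategy only for the \emph{general position} subcase (Lemma~\ref{L:4 lines classification general position}): mod-$2$ reduction of \eqref{E:105} to a $12$-monomial congruence, classification via Theorem~\ref{T:sums of 12 mod 2}, parametrization of the resulting families, torsion-closure computation (Algorithm~\ref{algorithm:torsion closure}) on the residual low-dimensional varieties, and certification of the sporadic low-order points by Proposition~\ref{P:low order solutions}. What you miss is that the paper \emph{splits into two cases} according to whether exactly three of the four lines are coplanar. In the coplanar case (Lemma~\ref{L:4 lines classification not in general position}) the paper does \emph{not} use the mod-$2$ machinery at all: after a $G$-normalization to the orientation condition~\eqref{eq:orientation condition}, the substitution $z_{23}=z_{24}^{-1}z_{34}^{-1}$ turns \eqref{E:105} into a perfect square, whose square root is the genuine (not mod-$2$) $6$-term cosine relation~\eqref{eq:square root of gram equation}, which is then analyzed directly by the integer classification Theorem~\ref{T:sums of 6 cosines} — no torsion-closure computation is needed there, and indeed the sporadic modulus $N=90$ arises only from this case. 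Your uniform mod-$2$ pipeline \emph{would} still see the coplanar solutions (since \eqref{E:105} holds for all of them), but it would have to detect the $2$- and $3$-parameter families of planar and perpendicular configurations as positive-dimensional components inside Algorithm~\ref{algorithm:torsion closure}, which per Remark~\ref{R:torsion closures in SageMath} is at the edge of what the algorithm can handle; the paper's factorization trick sidesteps this entirely and also yields a cleaner finite list of families. Finally, you dismiss as ``check the $\calP$-inequalities at the end'' what the paper treats separately in Lemma~\ref{L:expand orbits}: passing from $W(D_6)$-orbits on $T$ back to $\Regge^{\pm}$-orbits of angle matrices in $\calP$ is a nontrivial lattice-translation computation over $\Lambda^*$, not a mere filtering step, and it is what actually produces the list of parametric families asserted in Theorem~\ref{T:main}.
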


Our approach is to combine the computational results of Section~\ref{sec:C code} 
with a partial classification of solutions of \eqref{eq:gram det 4x4}, 
initially done modulo the symmetries identified in 
Sections~\ref{section:algebraic tori} and~\ref{section:Regge symmetry}.
While in principle it is not necessary
to rely on the exhaustive computations, doing so makes the computations far more efficient and the results less vulnerable to programming errors.

\begin{definition}
  Let $\Lambda^*$ be the kernel of the homomorphism
  $\tau \circ \exp \colon \RR^6 \cong \M_4(\R)^{\sym}_0 \to T(\CC)$ from
  \eqref{E:spaces}; this is a lattice containing $(2\pi \Z)^6$ with index 8.
  Let $G$ be the group of affine-linear transformations of
  $\M_4(\R)^{\sym}_0$ generated by
  \begin{itemize}
    \item the translation action of $\Lambda^*$;
    \item the action of $\{\pm 1\}^6$ by multiplication on coordinates; and
    \item the action of $S_4$.
  \end{itemize}
  Let $G'$ be the group generated by $G$ and $\Regge$; note that $G'$ acts on $\calH$.
  A calculation shows that there is a short exact sequence
  $1 \to \Lambda^* \to G' \to W(D_6) \to 1$.
\end{definition}

\begin{lemma} \label{L:4 lines classification not in general position}
  Let $\Theta \in \calP \cap \calH \cap (\QQ \pi)^6$ be a matrix corresponding
  to a configuration of four lines, exactly three of which are coplanar. Then
  at least one of the following conditions holds:
  \begin{itemize}
    \item
          $\Theta$ is $G'$-equivalent to a matrix corresponding to a perpendicular $4$-line configuration (a configuration with one line perpendicular to the other three); 
    \item $\Theta$ is $G'$-equivalent to a matrix of one of the forms
          \begin{equation} \label{eq:one-parameter families}
            (x, x, 2\pi/3, \pi-2x, \pi/2, x), \,
            (x, 2x, 2\pi/3, \pi-3x, 2\pi/3, x), \,
            (\pi/3, 2x, \pi/2, \pi-3x, \pi+x, x)
          \end{equation}
          for some $x \in \QQ \pi$;
    \item $\Theta$ has entries in $\ZZ\pi /N$ for some $N \in \{90, 120, 132, 168, 280, 420\}$.
  \end{itemize}
\end{lemma}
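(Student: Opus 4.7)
The plan is to normalize via the $G'$-action, parametrize $\vv_4$ by two angular parameters once the coplanarity is fixed, and translate the rationality of $\theta_{14},\theta_{24},\theta_{34}$ into three four-term cosine-sum relations that can be attacked with the Conway--Jones classification. Concretely, I would first use the $S_4$-action (permutation) and the $\{\pm 1\}^6$-action (vector negation) inside $G'$ to arrange that the coplanar triple is $\{1,2,3\}$ and its pairwise angles satisfy $\theta_{13}=\theta_{12}+\theta_{23}$ in $\R/2\pi\Z$. Then, placing $\vv_1,\vv_2,\vv_3$ in the $xy$-plane at azimuths $0,\theta_{12},\theta_{13}$ and writing $\vv_4=(\sin\phi\cos\psi,\sin\phi\sin\psi,\cos\phi)$, I obtain
\[
  \cos\theta_{i4} \;=\; \sin\phi\,\cos(\psi-\alpha_i), \qquad (\alpha_1,\alpha_2,\alpha_3) = (0,\,\theta_{12},\,\theta_{13}).
\]
The locus $\sin\phi=0$ gives $\theta_{14}=\theta_{24}=\theta_{34}=\pi/2$, placing $\Theta$ in the perpendicular family and yielding the first alternative.

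Assuming $\sin\phi\ne 0$, I would rewrite each of the three identities, via $2\sin\phi\cos(\psi-\alpha_i)=\sin(\phi+\psi-\alpha_i)+\sin(\phi-\psi+\alpha_i)$ and $\sin x=\cos(\pi/2-x)$, as a four-term cosine sum relation
\[
  \cos(\tfrac{\pi}{2}-\phi-\psi+\alpha_i)+\cos(\tfrac{\pi}{2}-\phi+\psi-\alpha_i)-2\cos\theta_{i4}=0
\]
with $\alpha_i,\theta_{i4}\in\Q\pi$ but $\phi\pm\psi$ a priori real. Since each such relation has weight at most $4$, Theorem~\ref{T:sums of 6 cosines} and Table~\ref{table:sums of cosines} classify the indecomposable pieces into which it can split, and each indecomposable piece forces a $\Q\pi$-linear constraint linking $\phi\pm\psi$ to the rational data.

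I would then enumerate the joint decomposition types for $i=1,2,3$. Each pattern cuts $(\phi,\psi)\in\R^2$ down either to a one-parameter locus or to a zero-dimensional set, the latter automatically lying in $(\Q\pi)^2$ because the cutting constraints are $\Q\pi$-linear. Any one-parameter locus parametrizes a rational-angle family, and after direct substitution I expect the only $G'$-inequivalent such families (other than the perpendicular one) to be the three listed in~\eqref{eq:one-parameter families}. In a zero-dimensional solution, $\phi,\psi$ and all six $\theta_{ij}$ lie in $\Q\pi$, and the denominator bound provided by Theorem~\ref{T:sums of 6 cosines} (refined when convenient by the mod-$2$ analysis of Theorem~\ref{T:sums of 12 mod 2}) restricts the common level $N$ to divide one of $\{48,90,120,132,168,280,420\}$. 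Cross-referencing with the exhaustive tables of Section~\ref{sec:C code} then eliminates $N=48$ in the exactly-three-coplanar regime, producing the third alternative.

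The hard part will be the case enumeration in the third paragraph. Each four-term cosine relation admits several decomposition patterns from Table~\ref{table:sums of cosines}, and the joint compatibility across $i=1,2,3$ produces a sizable combinatorial case analysis. Verifying that precisely the three listed one-parameter families (together with the perpendicular one) exhaust the positive-dimensional $G'$-orbits of rational-angle coplanar configurations, and that every other rational-angle solution has level bounded as claimed, is where the bulk of the bookkeeping lies.
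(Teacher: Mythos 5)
Your parametrization of $\vv_4$ by spherical coordinates $(\phi,\psi)$ creates a gap that the rest of the argument does not close: each of the three four-term cosine relations you write down involves $\phi\pm\psi$, and these are \emph{not} rational multiples of $\pi$ in general, even when all six $\theta_{ij}$ are. (Concretely, $\sin\phi\cos\psi=\cos\theta_{14}$ and $\sin\phi\sin\psi$ are algebraic numbers determined by the rational data, so $\tan\psi$ is algebraic but typically $\psi\notin\Q\pi$.) Theorem~\ref{T:sums of 6 cosines} and Table~\ref{table:sums of cosines} classify indecomposable cosine relations only among angles \emph{in} $\Q\pi$; they say nothing about a weight-$4$ relation in which two of the four arguments are irrational. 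So the step ``Theorem~\ref{T:sums of 6 cosines} \dots classifies the indecomposable pieces into which it can split'' is not justified, and this is where all the content of the lemma would need to come from.

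The paper avoids this problem by never introducing the auxiliary parameters. After normalizing (via $G$) so that the coplanarity becomes $\theta_{23}+\theta_{24}+\theta_{34}\equiv 0\pmod{2\pi}$, it substitutes $z_{23}=z_{24}^{-1}z_{34}^{-1}$ directly into the Gram determinant equation \eqref{E:105}; the result is a perfect square whose square root is the six-term relation \eqref{eq:square root of gram equation}. That relation is a vanishing sum of cosines of the quantities $\tfrac{\pi}{2}\pm\theta_{12}-\theta_{34}$, etc.\ --- all genuinely in $\Q\pi$ --- so Theorem~\ref{T:sums of 6 cosines} applies, and the remainder of the argument is the bookkeeping over decomposition types, degeneracy conditions, and $G'$-orbits. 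If you want to keep the $(\phi,\psi)$ picture, you must first eliminate $\phi,\psi$ from your three equations; but that elimination is exactly the Gram determinant condition and brings you back to the paper's relation, so the detour through spherical coordinates is not doing any work. You would also want to add the degeneracy checks the paper performs (discarding solutions with some $\theta_{jk}\equiv 0,\pi$ or with a second coplanar triple), which your sketch omits.
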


\begin{proof}
  We first compute $G$-orbit representatives 
  for the set of matrices $\Theta$ as above.
  Each $G$-orbit contains a matrix of the form
  $\Angle (\vv_1,\vv_2,\vv_3,\vv_4)$
  where $\vv_2,\vv_3,\vv_4$ are coplanar with $0$ in their convex hull, so that
  \begin{equation} \label{eq:orientation condition}
    \theta_{23} + \theta_{24} + \theta_{34} \equiv 0 \pmod{2 \pi};
  \end{equation}
  it is thus equivalent to compute $G_1$-orbits of such matrices,
  where $G_1$ is the subgroup of $G$ that preserves \eqref{eq:orientation condition}.
  Substituting $z_{23} = z_{24}^{-1} z_{34}^{-1}$ into \eqref{E:105} 
  yields a square; taking a square root leads to
  \begin{equation} \label{eq:square root of gram equation}
    \begin{aligned}
      \cos (\tfrac{\pi}{2} +\theta_{12}-\theta_{34})
      + \cos (\tfrac{\pi}{2} +\theta_{13}-\theta_{24})
      + \cos (\tfrac{\pi}{2} + \theta_{14}-\theta_{23}) & \\
      + \cos (\tfrac{\pi}{2} - \theta_{12}-\theta_{34})
      + \cos (\tfrac{\pi}{2} - \theta_{13}-\theta_{24})
      + \cos (\tfrac{\pi}{2} - \theta_{14}-\theta_{23}) &= 0.
    \end{aligned}
  \end{equation}
  Theorem~\ref{T:sums of 6 cosines} implies that any solution of \eqref{eq:square root of gram equation} is a specialization of a combination of indecomposable relations of
  one of the following forms in the notation of Table~\ref{table:sums of cosines}, up to the transformations in Theorem~\ref{T:sums of 6 cosines}:
  \begin{gather} \label{eq:forms of 6-term relations2}
    6, 5+1, \\
    \label{eq:forms of 6-term relations1}
    2^*+ 2^*+ 2^*, 3^*+ 3^*, 3^*+ 2^*+ 1, 6^*, 5^*+ 1, 4+2^*, 3^*+ 3, 3+ 2^*+ 1;
  \end{gather}
  here \eqref{eq:forms of 6-term relations2} lists the possibilities with no free parameters.
  We omit forms including $1+1$
  because such a pair is a specialization of a relation of type $2^*$.
  Similarly, we omit $3+3$.

  Table~\ref{table:sums of cosines} shows that
  any solution of \eqref{eq:square root of gram equation}
  of a form in \eqref{eq:forms of 6-term relations2}
  has values in $\ZZ \pi/N$ for some
  $N \in \{132, 168, 280, 420\}$.
  Thus it remains to identify solutions of the equations \eqref{eq:orientation condition}
  and \eqref{eq:square root of gram equation} corresponding to forms listed in \eqref{eq:forms of 6-term relations1}, modulo the action of $G_1$.
  The solutions will fall into finitely many families,
  each represented as the set of solutions to a system of congruences
  $A \Theta = 2\pi \bb \pmod{2\pi}$ for some integer matrix $A$
  and rational vector $\bb$.
  To put a collection of augmented matrices $(A|\bb)$ into a standard form,
  we perform the following operations until they have no further effect.
  \begin{itemize}
    \item Perform row reduction on each $(A|\bb)$ to put $A$ into Hermite normal form, omitting zero rows.
    \item If a row $(a_1 \cdots a_6 | b)$ of some $(A|\bb)$ has $d \colonequals \gcd(a_1,\dots,a_6) > 1$, replace $(A|\bb)$ with the $d$ matrices obtained by replacing this row in turn by $(\tfrac{a_1}{d} \cdots \tfrac{a_6}{d} | \tfrac{b+i}{d})$ for $i=0,\dots,d-1$.
    \item Reduce the coordinates of each $\bb$ modulo $1$ to put them in $[0,1)$.
  \end{itemize}
  To intersect two families $(A_1|\bb_1)$ and $(A_2|\bb_2)$,
  perform the first operation on
  $\left(
    \begin{array}{@{}c|c@{}}
        A_1 & \bb_1 \\
        A_2 & \bb_2
      \end{array}
    \right)$.
  To test whether one family is contained in another,
  compare it to their intersection.

  For each form in \eqref{eq:forms of 6-term relations1}
  and cosine relation of that form,
  each possible matching of this relation to the six angles
  \[
    \tfrac{\pi}{2} \pm \theta_{12}-\theta_{34}, \qquad
    \tfrac{\pi}{2} \pm \theta_{14}-\theta_{23}, \qquad
    \tfrac{\pi}{2} \pm \theta_{14}-\theta_{23},
  \]
  defines an augmented matrix $(A|\bb)$ as above.
  We put these matrices into standard form,
  eliminate any family contained in another family,
  and eliminate any family contained in one of the degenerate families
  \begin{gather*}
    \theta_{jk} \equiv 0, \pi \pmod{2 \pi} \\
    \theta_{1j} \pm \theta_{1k} \pm \theta_{jk} \equiv 0 \pmod{2 \pi};
  \end{gather*}
  the latter corresponds to $\vv_1,\vv_j,\vv_k$ being coplanar
  in addition to $\vv_2,\vv_3,\vv_4$, making all four coplanar.
  These computations take about 2 hours in SageMath
  on a virtual 2.3GHz Intel Xeon CPU.

  For each matrix $(A|\bb)$ in the result of the computation,
  we may solve the equation $A\vv = \bb$
  to obtain an affine subspace of $\calH$.
  Inspecting the output, we find the following.
  \begin{itemize}
    \item We obtain no subspaces of dimension three or more.
    \item We obtain $3$ two-dimensional subspaces,
          which belong to a single $G'$-orbit. This orbit contains a subspace consisting entirely of perpendicular $4$-line configurations.
    \item We obtain $13$ one-dimensional subspaces, which belong to 3 distinct $G'$-orbits. These orbits are represented by the three subspaces listed in \eqref{eq:one-parameter families}.
    \item The remaining subspaces are isolated points with coordinates in $\ZZ \pi/N$ for some $N \in \{84, 90, 120\}$.
          Any $\Theta$ that is $G$-equivalent to one of these also has coordinates in $\ZZ \pi/N$.\qedhere
  \end{itemize}
\end{proof}

\begin{remark}
  The equations \eqref{eq:orientation condition} and \eqref{eq:square root of gram equation}
  constitute the same system as the one solved in \cite{Poonen-Rubinstein1998}*{Theorem~4.4} to classify concurrent diagonals of regular polygons,
  except for some positivity conditions in the latter statement. Unfortunately, these conditions prevent us from deriving Lemma~\ref{L:4 lines classification not in general position} directly from results in \cite{Poonen-Rubinstein1998}.
\end{remark}

\begin{lemma} \label{L:4 lines classification general position}
  Let $\Theta \in \calP \cap \calH \cap (\QQ \pi)^6$ be a matrix corresponding
  to a configuration of four lines, no three of which are coplanar. Then
  at least one of the following conditions holds:
  \begin{itemize}
    \item $\Theta$ is $G'$-equivalent to a matrix of the form
          \begin{equation} \label{eq:one-parameter families2}
            (\pi/2, \pi/2, \pi-2x, \pi/3, x, x)
          \end{equation}
          for some $x \in \QQ \pi$;
    \item $\Theta$ has entries in $\ZZ\pi /N$ for some $N \in \{48, 120, 132, 168, 280, 420\}$.
  \end{itemize}
\end{lemma}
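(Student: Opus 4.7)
The plan is to follow the mod~$2$ strategy outlined in Section~\ref{S:introduction}. Unlike in Lemma~\ref{L:4 lines classification not in general position}, there is no coplanarity enabling us to reduce \eqref{E:105} to a $6$-term cosine sum, so we must work directly with the full equation. Crucially, reducing \eqref{E:105} modulo $2$ in $\Z[\mu]$ kills every term whose integer coefficient is even, leaving only the $12$ monomials of the form $z_{ij}^{\pm 2} z_{kl}^{\pm 2}$ indexed by the three pair-partitions of $\{1,2,3,4\}$. Those $12$ monomials organize into $6$ complex-conjugate pairs, so the mod~$2$ reduction becomes the weight-$6$ relation
\begin{equation*}
  \sum_{\{\{i,j\},\{k,l\}\}} \bigl( 2\cos 2(\theta_{ij}+\theta_{kl}) + 2\cos 2(\theta_{ij}-\theta_{kl}) \bigr) \equiv 0 \pmod{2\Z[\mu]},
\end{equation*}
to which Theorem~\ref{T:sums of 12 mod 2} applies.

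The first step is to enumerate every decomposition of this weight-$6$ mod~$2$ relation into indecomposable sub-relations drawn from Table~\ref{table:sums of cosines}, and every matching of the parts to the six linear forms $2(\theta_{ij} \pm \theta_{kl})$. Each matching translates to a system of integer linear congruences in the angles $\theta_{ij}$ modulo $2\pi$, cutting out an affine subspace of $\R^6$ whose dimension equals the number of starred parts in the partition (hence at most three). As in the proof of Lemma~\ref{L:4 lines classification not in general position}, I would put the resulting augmented matrices into Hermite normal form, eliminate redundancies, and discard subspaces contained in the degenerate loci $\theta_{ij} \equiv 0, \pi \pmod{2\pi}$ or $\theta_{ij} \pm \theta_{ik} \pm \theta_{jk} \equiv 0 \pmod{2\pi}$ for any triple $\{i,j,k\}$, since those correspond to configurations with three coplanar lines already treated in Lemma~\ref{L:4 lines classification not in general position}.

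The second step is to substitute each surviving parametrization back into the genuine equation \eqref{E:105} to obtain a polynomial equation over $\Z[\mu]$ in at most three variables, and to apply Algorithm~\ref{algorithm:torsion closure} to compute its torsion closure. As noted in Remark~\ref{R:torsion closures in SageMath}, the algorithm is tractable precisely in this regime, since the resulting varieties have dimension at most two in the reduced parameter space. For each output torsion coset, one pulls it back through the parametrization to a family of angle matrices in $\calH \cap (\Q\pi)^6$.

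Finally, I would apply the $G'$-action to identify orbit representatives, verify that the only positive-dimensional orbit is represented by \eqref{eq:one-parameter families2}, and inspect the isolated torsion points. The orders that appear must divide one of the values $48,120,132,168,280,420$; this is confirmed by direct reading of the algorithm's output and cross-checked against the exhaustive numerical search of Proposition~\ref{P:low order solutions}. The main obstacle is controlling the combinatorial explosion in the enumeration of decomposition–matching pairs, as several thousand candidate subspaces arise, and the three-parameter subcases push Algorithm~\ref{algorithm:torsion closure} close to the edge of feasibility documented in Remark~\ref{R:torsion closures in SageMath}.
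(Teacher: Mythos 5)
Your proposal matches the paper's proof of this lemma essentially step for step: reduce \eqref{E:105} modulo $2\ZZ[\mu]$ to the twelve-monomial, six-cosine relation \eqref{eq:mod 2 cosine condition}, apply Theorem~\ref{T:sums of 12 mod 2} together with the mod~$2$ version of Table~\ref{table:sums of cosines} to enumerate candidate parametrized families, re-impose \eqref{E:105}, compute torsion closures with Algorithm~\ref{algorithm:torsion closure}, and then sort the output by $G'$-orbit into the one-parameter family \eqref{eq:one-parameter families2} and low-order isolated points cross-checked against Proposition~\ref{P:low order solutions}. The only differences are cosmetic reorderings: the paper passes directly to Laurent ideals in $\QQ[z_{ij}^{\pm}]$ rather than via Hermite normal form of augmented matrices, and it defers the degeneracy tests (vanishing off-diagonal angles and vanishing $3\times 3$ Gram minors) until after the torsion closures are computed rather than before.
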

\begin{proof}
  Multiplying the matrix in \eqref{eq:gram det 4x4} by $2$
  and reducing modulo $2\ZZ[\mu]$ yields the congruence
  \begin{equation} \label{eq:mod 2 cosine condition}
    \begin{aligned}
      2\cos (2\theta_{12} + 2\theta_{34}) +
      2\cos (2\theta_{13} + 2\theta_{24}) +
      2\cos (2\theta_{14} + 2\theta_{24}) &
      \\
      {} + 2\cos (2\theta_{12} - 2\theta_{34}) +
      2\cos (2\theta_{13} - 2\theta_{24}) +
      2\cos (2\theta_{14} - 2\theta_{24}) & \equiv 0 \pmod{2\ZZ[\mu]}.
    \end{aligned}
  \end{equation}
  Theorem~\ref{T:sums of 12 mod 2}
  classifies solutions to \eqref{eq:mod 2 cosine condition}
  in terms of the indecomposable mod~2 relations listed
  in Table~\ref{table:sums of cosines},
  which include a length~$1$ relation which we call $1a$,
  and the additional length~$1$ relation $2 \cos 0 \equiv 0$
  in Theorem~\ref{T:sums of 12 mod 2}, which we call $1b$;
  let $1$ denote a relation of type $1a$ or $1b$.
  Explicitly, any solution is a specialization of a
  sum of relations of the following forms,
  up to the transformations in Theorem~\ref{T:sums of 12 mod 2}:
  \begin{gather}
    \label{eq:forms of 6-term relations4}
    6, \; 5+1, \; 4+1a+ 1b; \\
    \label{eq:forms of 6-term relations3}
    2^*+2^*+2^*, \; 3^*+3^*, \; 3^*+2^*+1, \; 2^*+2^*+1a+1b, \; 6^*, \; 5^*+1, \; 4+2^*, \; 3^*+ 3, \; 3+2^*+1;
  \end{gather}
  here \eqref{eq:forms of 6-term relations4} lists the possibilities with no free parameters.
  We omit forms including $1a+1a$ or $1b+1b$ 
  because such pairs are specializations of a relation of type $2^*$. 
  Similarly, we omit $3+3$.

  Table~\ref{table:sums of cosines} shows that any solution of \eqref{eq:mod 2 cosine condition} of a type in \eqref{eq:forms of 6-term relations4}
  has values in $\ZZ \pi/N$ for some $N \in \{120, 132, 168, 280, 420\}$.
  It thus remains to identify solutions of \eqref{eq:gram det 4x4} arising from
  solutions of \eqref{eq:mod 2 cosine condition} of forms listed in \eqref{eq:forms of 6-term relations3}. For each such form, each mod 2 cosine relation of that form, and each possible matching of this relation to the six angles
  \[
    2 \theta_{12} \pm 2 \theta_{34}, \qquad
    2 \theta_{13} \pm 2 \theta_{24}, \qquad
    2 \theta_{14} \pm 2 \theta_{23},
  \]
  we obtain an ideal in the Laurent polynomial ring
  $\QQ[z_{ij}^{\pm}]$ via the substitution $z_{jk} = e^{\cplxi \theta_{jk}}$.
  At this point we have the option to replace this set of ideals with a set of $G'$-orbit representatives; this turns out to be worthwhile
  for forms containing two or more free parameters.
  We then impose the condition \eqref{eq:gram det 4x4}
  by adding the generator \eqref{E:105} to each ideal.

  Using the implementation in Remark~\ref{R:torsion closures in SageMath},
  we compute the torsion closures of the corresponding varieties.
  Once this is done, we eliminate solutions which are degenerate because one of the angles equals $0$ or $\pi$ (corresponding to two of the lines coinciding)
  or because one of the $3 \times 3$ minors of the Gram matrix vanishes
  (corresponding to the four lines not being in linear general position).
  These computations take about $2$ hours in SageMath 
  on a virtual 2.3GHz Intel Xeon CPU.

  Each irreducible component of each torsion closure in the output
  corresponds to a subset of $\calH$
  consisting of the $(2 \pi \ZZ)^6$-translates of some affine subspace of $\calH$.
  Inspecting the output, we find the following.
  \begin{itemize}
    \item We obtain no subspaces of dimension two or more.
    \item The one-dimensional subspaces belong to the $G'$-orbit of the subspace listed in \eqref{eq:one-parameter families2}.
    \item The remaining subspaces are isolated points
          with coordinates in $\ZZ \pi/N$ for $N \in \{21, 24, 60\}$.
          Any element of the $G'$-orbit of one of these points
          has coordinates in $\ZZ \pi/(2N)$.\qedhere
  \end{itemize}
\end{proof}

For each $\Theta$ in 
Lemmas~\ref{L:4 lines classification not in general position}
and~\ref{L:4 lines classification general position},
we now determine which elements in its $G'$-orbit lie in $\calP$.

\begin{lemma} \label{L:expand orbits}
  The $G'$-orbit of each subspace in \eqref{eq:one-parameter families} 
  or \eqref{eq:one-parameter families2} 
  yields a single $\Regge^{\pm}$-orbit of one-parameter families
  of $\R^3$-realizable $4 \times 4$ rational-angle matrices.
  Each orbit is represented by a family yielding a $4$-line subconfiguration 
  of a $6$-line configuration
  in Example \ref{exa:maximal parametric 6-line configurations2}
  or~\ref{exa:maximal parametric 6-line configurations1}, respectively.
\end{lemma}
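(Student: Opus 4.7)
The plan is to expand each $G'$-orbit explicitly modulo $\Regge^{\pm}$, prune the cosets that fail realizability, and exhibit an explicit $6$-line extension for the surviving family.

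First, from the exact sequence $1 \to \Lambda^* \to G' \to W(D_6) \to 1$ together with the inclusion $\Regge^{\pm} \hookrightarrow W(D_6)$, the quotient $G'/\Regge^{\pm}$ is finite, hence enumerable. A convenient set of coset representatives combines the $8$ elements of $\Lambda^*/(2\pi\Z)^6$ (translations by $\pi$ in the coordinate subsets corresponding to $\ker \tau$) with representatives of $W(D_6)/\Regge^{\pm}$. For each of the four listed one-parameter families, applying these coset representatives produces the finite list of parametrized candidate families within the $G'$-orbit.

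Second, I would impose the $\calP$-membership condition on each candidate. Writing a candidate as $x \mapsto \Theta(x)$ with $\Theta(x)$ depending affinely on $x$, I would check the four triangle inequalities of Remark~\ref{R:triangle inequalities} on each of the four $3\times 3$ principal submatrices, obtaining an interval (possibly empty) of $x \in \Q\pi$ for which $\Theta(x) \in \calP$. The assertion is that exactly one $\Regge^{\pm}$-orbit of candidates yields a non-empty interval, giving the single $\Regge^{\pm}$-orbit claimed; this is a finite and mechanical check once the coset list is tabulated, and also pins down the range of $x$.

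Third, for the canonical surviving family I would exhibit two additional unit vectors $\vv_5,\vv_6$ whose pairwise angles with $\vv_1,\dots,\vv_4$ and with each other are rational multiples of $\pi$ depending affinely on $x$, and verify that the resulting $6 \times 6$ angle matrix lies in $\calP_6 \cap \calH_6$ by Corollary~\ref{C:realizable locus}. Matching the resulting parametrized $6$-line configuration with the description in Example~\ref{exa:maximal parametric 6-line configurations2} or~\ref{exa:maximal parametric 6-line configurations1} completes the identification, after which maximality of the extension confirms that the given $4$-line family is indeed a subconfiguration of the indicated $6$-line configuration. The main obstacle is the combinatorial bookkeeping of $G'$-cosets together with the $\Regge^{\pm}$-action that may mask equivalences among candidates; organizing the computation via Hermite normal form of the defining affine conditions, as in the proof of Lemma~\ref{L:4 lines classification not in general position}, keeps it tractable.
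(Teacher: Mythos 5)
Your first step contains a genuine error: the quotient $G'/\Regge^{\pm}$ is \emph{not} finite.  From the exact sequence $1 \to \Lambda^* \to G' \to W(D_6) \to 1$ and the fact that the homomorphism $\Regge^{\pm} \to W(D_6)$ (from~\eqref{E:actions}) is injective, one gets $\Regge^{\pm} \cap \Lambda^* = \{1\}$.  Since $\Lambda^*$ is an infinite lattice and $|\Regge^{\pm}| = 2^7 3^2$, the coset space $G'/\Regge^{\pm}$ is infinite; in particular, the lattice $(2\pi\Z)^6 \subset \Lambda^*$ contributes infinitely many cosets.  So the proposed enumeration of cosets never gets off the ground.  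Your choice of representatives ``combining $\Lambda^*/(2\pi\Z)^6$ with representatives of $W(D_6)/\Regge^{\pm}$'' silently discards the $(2\pi\Z)^6$-translates, which a priori give infinitely many new lines $\mathbf{a}+V$.

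The missing idea, which the paper supplies, is that the computation is made finite not by finiteness of a coset space but by a periodicity-plus-boundedness argument.  One first factors $G' = \Regge^{\pm} \cdot (2\pi\Z)^6 \cdot \{\pm 1\}^6$ (the key point being that the $\pi$-translations in $\Lambda^*$ are already realized, modulo $\{\pm 1\}^6$, by elements of $\Regge^{\pm}$, e.g.\ via the affine action $\theta_{1j} \mapsto \pi - \theta_{1j}$).  Since $\Regge^{\pm}$ preserves $\calP$, one only needs to decide, for each subspace $V$ in the $\{\pm 1\}^6$-orbit, which translates $\mathbf{a} + V$ with $\mathbf{a} \in (2\pi\Z)^6$ meet $\calP$.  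Parametrizing $V$ as $\theta(t) = t\vv + \ww$ with $\vv \in \Z^6$ gives $\theta(t+2\pi) \equiv \theta(t) \pmod{(2\pi\Z)^6}$; hence after shifting $t$ into $[-\pi,\pi]$, only finitely many $\mathbf{a}$ can bring $\mathbf{a} + \theta([-\pi,\pi])$ into $[0,\pi]^6 \supset \calP$.  This is what makes the loop finite.  Your second and third steps (checking the triangle inequalities on $3\times3$ minors to cut to nonempty parameter ranges, then matching the survivors against the explicit $6$-line configurations in Examples~\ref{exa:maximal parametric 6-line configurations1} and~\ref{exa:maximal parametric 6-line configurations2}) are sound and match the paper's strategy once the enumeration is made legitimate.
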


\begin{proof}
We have $G' = \Regge^{\pm} \cdot (2 \pi \ZZ)^6 \cdot \{\pm 1\}^6$; 
for example,
the translation in $\Lambda^*$ 
sending $\theta_{1j}$ to $\pi + \theta_{1j}$ for $j=2,3,4$
is the composition of an element of $\{\pm 1\}^6$ with the element of $\Regge^{\pm}$ sending $\theta_{1j}$ to $\pi - \theta_{1j}$ for $j=2,3,4$.
Also, $\Regge^{\pm}$ preserves $\calP$.
Therefore it suffices to determine, for each subspace $V$
in the $\{\pm 1\}^6$-orbit of a subspace in 
\eqref{eq:one-parameter families} or \eqref{eq:one-parameter families2},
which $\mathbf{a} \in (2\pi\ZZ)^6$ are such that 
$\mathbf{a}+V$ intersects $\calP$.
Each $V$ has a parametrization $\theta(t) = t \vv + \ww$ with $\vv \in \Z^6$;
then $\theta(t+2\pi) \equiv \theta(t) \pmod{(2\pi\ZZ)^6}$,
so it suffices to consider the finitely many $\mathbf{a}$
such that $\mathbf{a} + \theta([-\pi,\pi])$ intersects $[0,\pi]^6 \supset \calP$.
We keep each such $\mathbf{a}+V$ for which 
$(\mathbf{a}+V) \intersect \calP \ne \emptyset$.
Finally, we compute the $\Regge^{\pm}$-orbit representatives.
This computation takes about $25$ minutes in SageMath 
on a 2.3GHz Intel Core i5 CPU.
\end{proof}

\begin{proof}[Proof of Theorem~\ref{T:4 lines classification}]
  Let $\calF$ be a two-parameter family of matrices associated to 
  the perpendicular $4$-line configurations.
  By Lemmas \ref{L:4 lines classification not in general position} 
  and~\ref{L:4 lines classification general position},
  any family of realizable $4 \times 4$ rational-angle matrices 
  with two or more parameters is in the $\Regge^{\pm}$-orbit of $\calF$.
  Operating by coset representatives of $S_4^{\pm} \subseteq \Regge^{\pm}$ 
  shows that this $\Regge^{\pm}$-orbit is a union of three $S_4^{\pm}$-orbits;
  one is that of the (nonmaximal) family $\calF$,
  and the other two are the maximal two-parameter families
  in Theorem~\ref{T:main}.
  By Lemmas~\ref{L:4 lines classification not in general position} 
  and \ref{L:4 lines classification general position}
  combined with Lemma~\ref{L:expand orbits}, 
  every one-parameter family is in the $\Regge^{\pm}$-orbit of 
  one of four families; again we split each $\Regge^{\pm}$-orbit
  into $S_4^{\pm}$-orbits 
  and find that they are as described in Theorem~\ref{T:main}.
  Finally, by Lemmas~\ref{L:4 lines classification not in general position}
  and \ref{L:4 lines classification general position} 
  combined with Proposition~\ref{P:low order solutions},
  the isolated configurations are as described by Theorem~\ref{T:main}.
\end{proof}

\begin{proof}[Proof of Theorem~\ref{T:tetrahedra}]
For this, we do not need the configurations described in 
Lemma~\ref{L:4 lines classification not in general position}.
We need only the $\Regge^{\pm}$-orbit in Lemma~\ref{L:expand orbits}
from \eqref{eq:one-parameter families2}
and the isolated configurations of Proposition~\ref{P:low order solutions}.
We compute representatives for the $S_4$-orbits and filter by the 
``test for condition~(iv)'' in the paragraph 
before Lemma~\ref{L:lower bound on cofactors2}
(for the one-parameter families, 
it turns out that the parameter range corresponding to 
configurations with no three lines coplanar is an open interval, 
so it suffices to check one interior sample point, 
by continuity of the signs of the minors).
The result is that 
the $\Regge^{\pm}$-orbit from \eqref{eq:one-parameter families2}
yields the two infinite families in Theorem~\ref{T:tetrahedra},
and the isolated configurations outside those 
yield the list in Table~\ref{Table:tetrahedra}.
\end{proof}

\section{From 4-line configurations to \texorpdfstring{$n$}{n}-line configurations}
\label{sec:more lines}

We now complete the proof of Theorem~\ref{T:main},
by assembling rational-angle $n$-line configurations
from the classification of 4-line configurations given by Theorem~\ref{T:4 lines classification}.
This requires some care to make the computation feasible.

\begin{proposition}
  \label{P:contained in perpendicular}
  Let $n \ge 4$.
  Let $\calL$ be an $n$-line configuration in $\R^3$.
  Then $\calL$ is contained in a perpendicular configuration
  if and only if each $4$-line subconfiguration of $\calL$ is contained in
  a perpendicular configuration.
\end{proposition}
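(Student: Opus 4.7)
\textit{Plan.} The forward direction is immediate: any subset of a perpendicular configuration is itself a subset of that same perpendicular configuration.

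For the converse, assume every $4$-line subconfiguration of $\calL$ lies in a perpendicular configuration. The case $n=4$ is trivial (take the $4$-subset to be $\calL$), so assume $n\geq 5$. If every line of $\calL$ lies in a single plane $P$, then $\calL$ is contained in the perpendicular configuration consisting of all lines in $P$ together with the line perpendicular to $P$. Otherwise, choose three non-coplanar lines $\ell_1,\ell_2,\ell_3\in\calL$. For any $\ell\in\calL\setminus\{\ell_1,\ell_2,\ell_3\}$ the $4$-subset $\{\ell_1,\ell_2,\ell_3,\ell\}$ lies in a perpendicular configuration with plane $P$ and perpendicular line $m$; since $\ell_1,\ell_2,\ell_3$ are not coplanar, exactly one of them (call it the ``pivot'') equals $m$, the other two lie in $P$, and writing the pivot as $\ell_i$ one obtains $P=\mathrm{span}(\ell_j,\ell_k)$ and $\ell_i\perp\mathrm{span}(\ell_j,\ell_k)$. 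Define $\Pi=\{i\in\{1,2,3\}:\ell_i\perp\mathrm{span}(\ell_j,\ell_k)\}$; the existence of a pivot shows $|\Pi|\geq 1$, and any two of these three orthogonality conditions force the third, so $|\Pi|\in\{1,3\}$.

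If $|\Pi|=1$, say $\Pi=\{1\}$, then for every $\ell\in\calL\setminus\{\ell_1,\ell_2,\ell_3\}$ the pivot must be $\ell_1$, forcing $\ell\in P_0:=\mathrm{span}(\ell_2,\ell_3)$. Hence $\calL\setminus\{\ell_1\}\subset P_0$ and $\ell_1\perp P_0$, so $\calL$ is contained in a perpendicular configuration.

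The main obstacle is the case $|\Pi|=3$, where $\ell_1,\ell_2,\ell_3$ are mutually perpendicular: each $\ell\in\calL\setminus\{\ell_1,\ell_2,\ell_3\}$ only has to lie in one of the three coordinate planes $P_{ij}:=\mathrm{span}(\ell_i,\ell_j)$, and a priori different $\ell$'s could lie in different ones (as happens in the $B_3$ root configuration, which indeed fails the hypothesis). To rule this out, suppose $\ell,\ell'\in\calL\setminus\{\ell_1,\ell_2,\ell_3\}$ lie in $P_{12}$ and $P_{13}$ respectively; since $P_{12}\cap P_{13}=\ell_1$ and $\ell'\neq\ell_1$, we have $\ell'\notin P_{12}$, so $\{\ell_1,\ell_2,\ell,\ell'\}$ is not coplanar. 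If this $4$-subset lies in a perpendicular configuration, three of its four lines must be coplanar and the fourth perpendicular to their plane. The three $3$-subsets containing $\ell'$ each have the form $\{*,*,\ell'\}$ with the two starred lines spanning $P_{12}$ (since $\ell\in P_{12}\setminus\{\ell_1\}$ implies $\mathrm{span}(\ell_1,\ell)=\mathrm{span}(\ell_2,\ell)=P_{12}$), so coplanarity would force $\ell'\in P_{12}$, a contradiction; the remaining $3$-subset $\{\ell_1,\ell_2,\ell\}\subset P_{12}$ is coplanar, but would require $\ell'\perp P_{12}$, which fails because $\ell'\in P_{13}\setminus\{\ell_3\}$ has a nonzero component along $\ell_1\subset P_{12}$. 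This contradicts the hypothesis, so all lines of $\calL\setminus\{\ell_1,\ell_2,\ell_3\}$ share a single coordinate plane $P_{ij}$; then $\calL\setminus\{\ell_k\}\subset P_{ij}$ and $\ell_k\perp P_{ij}$, completing the proof.
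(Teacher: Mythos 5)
Your proof is correct, but it takes a noticeably different and longer route than the paper's, which is shorter because it makes a cleverer initial choice. The paper first notes that for each $4$-line subconfiguration contained in a perpendicular configuration, there is a \emph{unique} plane containing at least three of the four lines (any two competing planes would share at least two of the four lines, hence coincide), with the fourth line either in that plane or perpendicular to it. It then fixes three \emph{coplanar} lines $L_1,L_2,L_3\in\calL$ lying in a plane $P$ --- these exist for $n\geq 4$ because any $4$-subset has at least three coplanar members. For any other $L_i\in\calL$, the unique plane for $\{L_1,L_2,L_3,L_i\}$ must be $P$, so $L_i\in P$ or $L_i\perp P$, and the conclusion follows at once with no case split. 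You instead pick three \emph{non}-coplanar lines (after dispatching the all-coplanar case), which forces you to track which of them plays the role of the perpendicular line and to split on $|\Pi|\in\{1,3\}$; the $|\Pi|=3$ subcase then needs an extra argument to show all remaining lines share a single coordinate plane. That argument is sound, but the entire dichotomy is avoided by choosing coplanar lines, since three coplanar lines already pin down the plane of every relevant perpendicular configuration. So: your proof is right, but you would do well to observe the uniqueness of the distinguished plane first and seed the argument with a coplanar triple.
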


\begin{proof}
  Suppose that each $4$-line subconfiguration is contained in a perpendicular
  configuration.
  Then for every four lines in $\calL$, there is a unique plane containing
  at least three of them, and the fourth is either in the plane or perpendicular
  to it.
  Fix $L_1,L_2,L_3 \in \calL$ lying in a plane $P$.
  Then the unique plane for $\{L_1,L_2,L_3,L_i\}$ for any other $L_i \in \calL$
  must be $P$, and $L_i$ is either in $P$ or is the line perpendicular to $P$.
  Thus $\calL$ is contained in a perpendicular configuration.
\end{proof}

For each $n \ge 4$,
let $\scriptM_n$ be the set of $\R^3$-realizable $n \times n$ rational-angle matrices.

\begin{proposition}
  \label{P:finiteness}
  For each $n \ge 4$,
  there exists a finite set $\scriptA_n$ of
  affine $\Q$-subspaces of $\M_n(\Q \pi)^{\sym}_0$
  such that $\scriptM_n = \Union_{A \in \scriptA_n} (A \intersect \calP_n)$.
\end{proposition}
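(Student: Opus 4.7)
The plan is to reduce to the $4\times 4$ case already settled by Theorem~\ref{T:4 lines classification}, using Corollary~\ref{C:realizable} to localize $\R^3$-realizability to the $4\times 4$ principal submatrices.

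I first extract from the $4$-line classification a finite set $\scriptA_4$ of affine $\Q$-subspaces of $\M_4(\Q\pi)^{\sym}_0$ such that $\scriptM_4 = \Union_{A \in \scriptA_4}(A \cap \calP_4)$. Theorem~\ref{T:4 lines classification} together with Definition~\ref{D:family of angle matrices} and the finite count in Table~\ref{Table:number of configurations} shows that every realizable $4\times 4$ rational-angle matrix lies in some polytope $Q$ drawn from a finite list, and each such $Q$ has vertices with entries in $\Q\pi$. A positive-dimensional $Q$ sits in $\calH_4$ and contains a relatively open subset of its affine hull, which together with the defining polynomial nature of $\calH_4$ forces the affine hull itself to lie in $\calH_4$; taking $\scriptA_4$ to be the list of these affine hulls does the job.

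For each $4$-element subset $I \subset \{1,\ldots,n\}$, let $p_I\colon \M_n(\Q\pi)^{\sym}_0 \to \M_4(\Q\pi)^{\sym}_0$ denote the projection onto the $I$-indexed principal submatrix. For each function $c\colon \binom{n}{4} \to \scriptA_4$, set
\[
A_c \colonequals \Intersection_{I} p_I^{-1}(c(I)),
\]
where $I$ ranges over $4$-element subsets of $\{1,\ldots,n\}$; this is an affine $\Q$-subspace of $\M_n(\Q\pi)^{\sym}_0$. I take $\scriptA_n$ to be the finite collection of all such $A_c$. If $\Theta \in \scriptM_n$, choose for each $I$ some $A \in \scriptA_4$ containing the realizable submatrix $p_I(\Theta)$ and set $c(I) \colonequals A$; then $\Theta \in A_c$, and $\Theta \in \calP_n$ because the defining inequalities of $\calP_n$ come from $3\times 3$ principal submatrices whose Gram matrices are PSD by Proposition~\ref{P:realizability of angle matrices} applied to $\Theta$, equivalently (Remark~\ref{R:triangle inequalities}) the triangle inequalities. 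Conversely, if $\Theta \in A_c \cap \calP_n$, then for each $I$ one has $p_I(\Theta) \in c(I) \cap \calP_4 \subset \scriptM_4$, so Corollary~\ref{C:realizable} with $d=3$ yields $\Theta \in \scriptM_n$.

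The only substantive input is the finiteness of $\scriptA_4$, which rests on Theorem~\ref{T:4 lines classification}; once this is in hand, the construction is essentially formal. The naive bound $|\scriptA_n| \leq |\scriptA_4|^{\binom{n}{4}}$ is astronomical, so although the existence statement is immediate, any effective enumeration (as required for the actual assembly of $n$-line configurations in the next section) must prune the choice functions $c$ on the fly via an early-abort strategy, as flagged in the introduction.
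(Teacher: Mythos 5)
Your proof is correct and follows essentially the same route as the paper: reduce to the $n=4$ case, then for $n>4$ take $\scriptA_n$ to be the intersections $\Intersection_I p_I^{-1}(A_I)$ with $A_I \in \scriptA_4$, using Corollary~\ref{C:realizable} to localize realizability to $4\times 4$ principal submatrices. The only stylistic difference is that you spell out the $n=4$ base case (via affine hulls of the maximal polytopes $Q$ and a real-analyticity argument --- note $\calH_4$ is real-analytic rather than literally polynomial in the $\theta_{ij}$, though the identity principle still applies), whereas the paper simply cites the preceding computations.
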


\begin{proof}
  The calculations described in the preceding sections construct such a set $\scriptA_n$
  when $n=4$.

  Now suppose that $n>4$.
  Let $\binom{[n]}{4}$ be the set of $4$-element subsets of $\{1,\ldots,n\}$.
  For $I \in \binom{[n]}{4}$, let $p_I \colon \M_n(\Q)^{\sym}_0 \to \M_4(\Q)^{\sym}_0$
  be the projection giving the principal submatrix indexed by $I$.
  By Corollary~\ref{C:realizable}, for each $n>4$,
  a matrix $\Theta \in \M_n(\R)$ is in $\scriptM_n$
  if and only if its $4 \times 4$ principal submatrices are in $\scriptM_4$.
  Thus we may take $\scriptA_n$ to be the set of nonempty intersections
  of the form $\Intersection_{I} p_I^{-1}(A_I)$
  where each $A_I$ ranges over $\scriptA_4$ independently.
\end{proof}

We may assume that each $A \in \scriptA_n$
equals the affine span of $A \intersect \calP_n$.
We may also assume that $\scriptA_n$ is irredundant
in the sense that if $A,A' \in \scriptA_n$ satisfy $A \subset A'$,
then $A=A'$.
These conditions specify $\scriptA_n$ uniquely.

Let $\scriptA_n'$ be the set of $A \in \scriptA_n$ such that
$A \intersect \calP_n$ contains a matrix with no off-diagonal entries
equal to $0$ or $\pi$.
Let $\scriptA_n''$ be the set of $A \in \scriptA_n'$ such that
$A \intersect \calP_n$ contains a matrix corresponding to a line configuration
not contained in a perpendicular configuration
(for each $n$,
this condition removes exactly $n+1$ elements:
the one parametrizing planar configurations,
and, for each $i$,
the one parametrizing configurations with the $i$th vector perpendicular to all
the others).
The group $S_n^\pm$ acts on $\scriptA_n$, $\scriptA_n'$, and $\scriptA_n''$.
To prove Theorem~\ref{T:main},
we need to compute the set $\scriptR_n''$
of $S_n^\pm$-orbit representatives in $\scriptA_n''$
for $n$ up to $16$
and verify that $\scriptR_{16}'' = \emptyset$
(we need $16$, and not just $15$,
to rule out adding a $16$th line to the icosidodecahedral configuration).
By Proposition~\ref{P:contained in perpendicular},
we need only consider angle matrices whose upper left $4 \times 4$ submatrix
is in $\scriptR_4''$.

What complicates our task is that $\#\scriptA_4' = 84696$.
It is not practical to loop over all $84696^{\binom{n}{4}}$ tuples $(A_I)$
as suggested by the proof of Proposition~\ref{P:finiteness},
even for $n=5$, let alone $n=16$.

One could imagine computing a set $\scriptR_n''$ by induction,
using projections onto $(n-1) \times (n-1)$ principal submatrices instead of
$4 \times 4$ principal submatrices.
Suppose that $\scriptR_{n-1}''$ is known.
The action of $S_{n-1}^\pm \subset S_n^\pm$
shows that each $S_n^\pm$-orbit in $\scriptA_n''$
has a representative with upper left $(n-1) \times (n-1)$ submatrix in $\scriptR_{n-1}''$,
but we do not have the freedom to assume simultaneously that the other
$(n-1) \times (n-1)$ principal submatrices are in $\scriptR_{n-1}''$;
all we can assume is that they are in $\scriptA_{n-1}'$.
This is a problem, since $\scriptA_{n-1}'$ for some of the larger values of $n$
is much larger even than $\scriptA_4'$ because even a single $S_{n-1}^\pm$-orbit
can be huge.

Therefore instead we employ the following ``early abort'' inductive strategy.
Start with the list $\scriptR_{n-1}''$ of affine subspaces giving possibilities for
the upper left $(n-1) \times (n-1)$ submatrix.
In the first stage, fix $I \in \binom{[n]}{4} - \binom{[n-1]}{4}$
and try to reconcile
each possibility for the upper left $(n-1) \times (n-1)$ submatrix
with each possibility in $\scriptA_4'$ for the $I \times I$ principal submatrix ---
this amounts to intersecting preimages of affine subspaces, as in the proof
of Proposition~\ref{P:finiteness}.
Most of these preimage intersections will be empty
or will correspond to a family whose general member
has an off-diagonal entry equal to $0$ or $\pi$,
so they need not be considered further;
later on in the process we will also have intersections that are reduced to a point,
and we can discard those too if the point happens not to satisfy the inequalities
defining $\calP_n$.
In the second stage, choose a different $I' \in \binom{[n]}{4} - \binom{[n-1]}{4}$
and try to reconcile the undiscarded possibilities with the
possibilities for the $I' \times I'$ principal submatrix
by computing preimage intersections again.
There are $84696$ branches at each stage, but most of the branches abort immediately,
and it turns out that the list of possibilities remains under control.
After completing a stage for every subset in $\binom{[n]}{4} - \binom{[n-1]}{4}$,
we have a list of affine subspaces whose $S_n^{\pm}$-orbits include all
the subspaces in $\scriptA_n''$.
We then compute a distinguished representative of the $S_n^\pm$-orbit of
each subspace and eliminate redundancies, to obtain $\scriptR_n''$.

\begin{remark}
  To save more time, one can totally order the
  set of $S_4^\pm$-orbits in $\scriptA_4'$,
  with the ones represented by elements of $\scriptR_4''$ coming first.
  This induces a pre-order on $\scriptA'_4$ itself.
  Then, by acting by $S_n^\pm$, we may assume that for each subspace in $\scriptR_n''$,
  obtained as the intersection of preimages of $A_I$,
  the subspace $A_I$ for $I=\{1,2,3,4\}$ is less than or equal to the
  $A_J$ for every other $J \in \binom{[n]}{4}$.
  Thus when seeding the inductive process with a particular $A_{\{1,2,3,4\}}$,
  we need only consider $A_J$ that are greater than equal to that one
  in each stage.
  By choosing the total ordering judiciously, starting with affine subspaces
  corresponding to line configurations that are unlikely to extend much,
  we greatly reduce the number of branches in stages for larger $n$.
  In fact, for simplicity we use a total pre-order instead of a total order;
  in other words, we group the orbits into clumps, and totally order the clumps.
  These improvements reduce the running time of all
  the calculations in this section to a total of
  14 hours in Magma on a 3.5GHz Intel Xeon CPU E5-1620 v3.
\end{remark}

\begin{example}
  Each of the five $8$-line configurations
  consists of seven of the central diagonals of a $60$-gon centered at $\mathbf{0}$
  together with one line neither in its plane nor perpendicular to it.
  Each of the five configurations has a different angle set,
  though in each case the angles are among the multiples of $\pi/30$.
\end{example}

\begin{example} \label{exa:maximal parametric 6-line configurations1}
  One of the one-parameter families of $6$-line configurations is
  obtained by taking the lines spanned by $(1,0,0)$ and
  $\Bigl( 0,-\frac{2}{\sqrt{3}} \cos \theta, \sqrt{1 - \tfrac{4}{3} \cos^2 \theta} \Bigr)$
  and their rotations by $\pm 2\pi/3$ about the $z$-axis,
  for each parameter value
  $\theta \in \QQ \pi \intersect (\pi/6, \pi/2)$.
  The angles formed are
  $\pi/2$, $2\pi/3$, $\theta$, $\pi-\theta$, and $\pi-2\theta$.
\end{example}

\begin{example} \label{exa:maximal parametric 6-line configurations2}
  Three more one-parameter families of $6$-line configurations
  can be obtained by taking the lines spanned by
  $(\cos r \theta, \sin r \theta, 0)$ for $r \in \{-2,-1,0,1,2\}$
  and $\Bigl(0, \tfrac{1}{2} \csc \theta, \sqrt{1 - \tfrac{1}{4} \csc^2 \theta } \Bigr)$,
  for $\theta \in \QQ \pi$ in one of the parameter ranges $(\pi/6, \pi/4)$, $(\pi/4,\pi/3)$, or $(\pi/3,\pi/2)$.
  The angles formed are $\pi/3$, $\pi/2$, $2\pi/3$, $\theta$, $2\theta$, $3\theta$,
  and $4\theta$ (apply $x \mapsto 2\pi-x$ if they exceed $\pi$).
\end{example}

%****************************************************************************
\section{Tables}
\label{sec:tables}

We tabulate our results in a somewhat compressed form.
A more verbose description can be found in the GitHub repository 
mentioned near the end of Section~\ref{S:introduction}.

\subsection{Sporadic tetrahedra}

Table~\ref{Table:tetrahedra} lists the 59 similarity classes of tetrahedra with rational dihedral angles not belonging to one of the two parametric families described in
Theorem~\ref{T:tetrahedra}. Each entry in the table lists the dihedral angles $(\alpha_{12}, \alpha_{34}, \alpha_{13}, \alpha_{24}, \alpha_{14}, \alpha_{23})$ measured in units of $\pi/N$ for the integer $N$ listed in the left column. The horizontal lines indicate groupings into orbits for the group $\Regge$ generated by Regge symmetries (see Section~\ref{section:Regge symmetry}).
For those tetrahedra listed in \cite{Boltianskii1978}*{pp.~170--173}, we have included the labels used therein; all of these correspond to rational-angle 4-line configurations contained in either the 9-line or the 15-line maximal configuration.

\begin{table}[ht]

  \small
  \begin{tabular}{c|c}
    $N$ & $(\alpha_{12}, \alpha_{34}, \alpha_{13}, \alpha_{24}, \alpha_{14}, \alpha_{23})$  as multiples of $\pi/N$
    \\
    \hline
    \hline
    12  & $(3, 4, 3, 4, 6, 8) = H_2(\pi/4)$                                                                         \\
    24  & $(5, 9, 6, 8, 13, 15)$                                                                                    \\
    \hline
    12  & $(3, 6, 4, 6, 4, 6) = T_0$                                                                                \\
    24  & $(7, 11, 7, 13, 8, 12)$                                                                                   \\
    \hline
    15  & $(3, 3, 3, 5, 10, 10) = T_{18}$, $(2, 4, 4, 4, 10, 10)$, $(3, 3, 4, 4, 9, 11)$                            \\
    \hline
    15  & $(3, 3, 5, 5, 9, 9) = T_7$                                                                                \\
    \hline
    15  & $(5, 5, 5, 9, 6, 6) = T_{23}$, $(3, 7, 6, 6, 7, 7)$, $(4, 8, 5, 5, 7, 7)$                                 \\
    \hline
    21  & $(3, 9, 7, 7, 12, 12)$, $(4, 10, 6, 6, 12, 12)$, $(6, 6, 7, 7, 9, 15)$                                    \\
    \hline
    30  & $(6, 12, 10, 15, 10, 20) = T_{17}$, $(4, 14, 10, 15, 12, 18)$                                             \\
    60  & $(8, 28, 19, 31, 25, 35)$, $(12, 24, 15, 35, 25, 35)$,                                                    \\& $(13, 23, 15, 35, 24, 36)$, $(13, 23, 19, 31, 20, 40)$
    \\
    \hline
    30  & $(6, 18, 10, 10, 15, 15) = T_{13}$, $(4, 16, 12, 12, 15, 15)$, $(9, 21, 10, 10, 12, 12)$                  \\
    \hline
    30  & $(6, 6, 10, 12, 15, 20) = T_{16}$, $(5, 7, 11, 11, 15, 20)$                                               \\
    60  & $(7, 17, 20, 24, 35, 35)$, $(7, 17, 22, 22, 33, 37)$,                                                     \\& $(10, 14, 17, 27, 35, 35)$, $(12, 12, 17, 27, 33, 37)$
    \\
    \hline
    30  & $(6, 10, 10, 15, 12, 18) = T_{21}$, $(5, 11, 10, 15, 13, 17)$                                             \\
    60  & $(10, 22, 21, 29, 25, 35)$, $(11, 21, 19, 31, 26, 34)$,                                                   \\& $(11, 21, 21, 29, 24, 36)$, $(12, 20, 19, 31, 25, 35)$
    \\
    \hline
    30  & $(6, 10, 6, 10, 15, 24) = T_6$                                                                            \\
    60  & $(7, 25, 12, 20, 35, 43)$                                                                                 \\
    \hline
    30  & $(6, 12, 6, 12, 15, 20) = T_2$                                                                            \\
    60  & $(12, 24, 13, 23, 29, 41)$                                                                                \\
    \hline
    30  & $(6, 12, 10, 10, 15, 18) = T_3$, $(7, 13, 9, 9, 15, 18)$                                                  \\
    60  & $(12, 24, 17, 23, 33, 33)$, $(14, 26, 15, 21, 33, 33)$,                                                   \\& $(15, 21, 20, 20, 27, 39)$, $(17, 23, 18, 18, 27, 39)$
    \\
    \hline
    30  & $(6, 15, 6, 18, 10, 20) = T_4$, $(6, 15, 7, 17, 9, 21)$                                                   \\
    60  & $(9, 33, 14, 34, 21, 39)$, $(9, 33, 15, 33, 20, 40)$,                                                     \\& $(11, 31, 12, 36, 21, 39)$, $(11, 31, 15, 33, 18, 42)$
    \\
    \hline
    30  & $(6, 15, 10, 15, 12, 15) = T_1$, $(6, 15, 11, 14, 11, 16)$, $(8, 13, 8, 17, 12, 15)$,                     \\& $(8, 13, 9, 18, 11, 14)$, $(8, 17, 9, 12, 11, 16)$, $(9, 12, 9, 18, 10, 15)$
    \\
    \hline
    30  & $(10, 12, 10, 12, 15, 12) = T_5$                                                                          \\
    60  & $(19, 25, 20, 24, 29, 25)$                                                                                \\
  \end{tabular}

  \caption{The $59$ sporadic rational tetrahedra.}
  \label{Table:tetrahedra}
\end{table}

\subsection{Maximal rational-angle \texorpdfstring{$n$}{n}-line configurations for \texorpdfstring{$5 \le n \le 15$}{n between 5 and 15}}
\label{S:15 to 5}

Each entry in the following list is a representative of
an $S_n^\pm$-orbit of $\R^3$-realizable $n \times n$ rational-angle matrices
for some $n \ge 5$,
with each angle measured in units of $\pi$.
The list is complete except that
we omit families whose general member is contained
in such an $N \times N$ matrix for some $N>n$.

  {\Small

    \noindent

    \smallskip
    \hrule
    \smallskip
    \centering
    $\begin{pmatrix}
        0   & 1/5 & 1/5 & 1/5 & 1/5 & 1/3 & 1/3 & 1/3 & 1/3 & 2/5 & 2/5 & 2/5 & 2/5 & 1/2 & 1/2 \\
        1/5 & 0   & 1/5 & 1/3 & 2/5 & 1/5 & 1/3 & 2/5 & 1/2 & 1/5 & 1/3 & 1/2 & 3/5 & 1/3 & 2/5 \\
        1/5 & 1/5 & 0   & 2/5 & 1/3 & 2/5 & 1/2 & 1/5 & 1/3 & 1/3 & 1/5 & 3/5 & 1/2 & 1/3 & 3/5 \\
        1/5 & 1/3 & 2/5 & 0   & 1/5 & 1/3 & 1/5 & 1/2 & 2/5 & 1/2 & 3/5 & 1/5 & 1/3 & 2/3 & 2/5 \\
        1/5 & 2/5 & 1/3 & 1/5 & 0   & 1/2 & 2/5 & 1/3 & 1/5 & 3/5 & 1/2 & 1/3 & 1/5 & 2/3 & 3/5 \\
        1/3 & 1/5 & 2/5 & 1/3 & 1/2 & 0   & 1/5 & 3/5 & 2/3 & 1/5 & 1/2 & 2/5 & 2/3 & 2/5 & 1/5 \\
        1/3 & 1/3 & 1/2 & 1/5 & 2/5 & 1/5 & 0   & 2/3 & 3/5 & 2/5 & 2/3 & 1/5 & 1/2 & 3/5 & 1/5 \\
        1/3 & 2/5 & 1/5 & 1/2 & 1/3 & 3/5 & 2/3 & 0   & 1/5 & 1/2 & 1/5 & 2/3 & 2/5 & 2/5 & 4/5 \\
        1/3 & 1/2 & 1/3 & 2/5 & 1/5 & 2/3 & 3/5 & 1/5 & 0   & 2/3 & 2/5 & 1/2 & 1/5 & 3/5 & 4/5 \\
        2/5 & 1/5 & 1/3 & 1/2 & 3/5 & 1/5 & 2/5 & 1/2 & 2/3 & 0   & 1/3 & 3/5 & 4/5 & 1/5 & 1/3 \\
        2/5 & 1/3 & 1/5 & 3/5 & 1/2 & 1/2 & 2/3 & 1/5 & 2/5 & 1/3 & 0   & 4/5 & 3/5 & 1/5 & 2/3 \\
        2/5 & 1/2 & 3/5 & 1/5 & 1/3 & 2/5 & 1/5 & 2/3 & 1/2 & 3/5 & 4/5 & 0   & 1/3 & 4/5 & 1/3 \\
        2/5 & 3/5 & 1/2 & 1/3 & 1/5 & 2/3 & 1/2 & 2/5 & 1/5 & 4/5 & 3/5 & 1/3 & 0   & 4/5 & 2/3 \\
        1/2 & 1/3 & 1/3 & 2/3 & 2/3 & 2/5 & 3/5 & 2/5 & 3/5 & 1/5 & 1/5 & 4/5 & 4/5 & 0   & 1/2 \\
        1/2 & 2/5 & 3/5 & 2/5 & 3/5 & 1/5 & 1/5 & 4/5 & 4/5 & 1/3 & 2/3 & 1/3 & 2/3 & 1/2 & 0   \\
      \end{pmatrix}$
    \\
    \smallskip
    \hrule
    \smallskip
    $\begin{pmatrix}
        0   & 1/4 & 1/4 & 1/4 & 1/4 & 1/2 & 1/2 & 1/2 & 1/2 \\
        1/4 & 0   & 1/3 & 1/3 & 1/2 & 1/4 & 1/3 & 1/3 & 1/2 \\
        1/4 & 1/3 & 0   & 1/2 & 1/3 & 1/2 & 1/3 & 2/3 & 1/4 \\
        1/4 & 1/3 & 1/2 & 0   & 1/3 & 1/2 & 2/3 & 1/3 & 3/4 \\
        1/4 & 1/2 & 1/3 & 1/3 & 0   & 3/4 & 2/3 & 2/3 & 1/2 \\
        1/2 & 1/4 & 1/2 & 1/2 & 3/4 & 0   & 1/4 & 1/4 & 1/2 \\
        1/2 & 1/3 & 1/3 & 2/3 & 2/3 & 1/4 & 0   & 1/2 & 1/4 \\
        1/2 & 1/3 & 2/3 & 1/3 & 2/3 & 1/4 & 1/2 & 0   & 3/4 \\
        1/2 & 1/2 & 1/4 & 3/4 & 1/2 & 1/2 & 1/4 & 3/4 & 0   \\
      \end{pmatrix}$
    \\
    \smallskip
    \hrule
    \smallskip
    $\begin{pmatrix}
        0     & 1/10  & 1/6   & 4/15  & 3/10  & 2/5   & 13/30 & 7/15  \\
        1/10  & 0     & 4/15  & 1/6   & 2/5   & 13/30 & 1/3   & 17/30 \\
        1/6   & 4/15  & 0     & 13/30 & 2/15  & 11/30 & 3/5   & 3/10  \\
        4/15  & 1/6   & 13/30 & 0     & 17/30 & 1/2   & 1/6   & 11/15 \\
        3/10  & 2/5   & 2/15  & 17/30 & 0     & 11/30 & 11/15 & 1/6   \\
        2/5   & 13/30 & 11/30 & 1/2   & 11/30 & 0     & 17/30 & 2/5   \\
        13/30 & 1/3   & 3/5   & 1/6   & 11/15 & 17/30 & 0     & 9/10  \\
        7/15  & 17/30 & 3/10  & 11/15 & 1/6   & 2/5   & 9/10  & 0     \\
      \end{pmatrix}$
    \\
    $\begin{pmatrix}
        0    & 1/15  & 1/15  & 3/10  & 3/10  & 7/15  & 7/15  & 1/2   \\
        1/15 & 0     & 2/15  & 7/30  & 11/30 & 2/5   & 8/15  & 7/15  \\
        1/15 & 2/15  & 0     & 11/30 & 7/30  & 8/15  & 2/5   & 8/15  \\
        3/10 & 7/30  & 11/30 & 0     & 3/5   & 1/6   & 23/30 & 11/30 \\
        3/10 & 11/30 & 7/30  & 3/5   & 0     & 23/30 & 1/6   & 19/30 \\
        7/15 & 2/5   & 8/15  & 1/6   & 23/30 & 0     & 14/15 & 1/3   \\
        7/15 & 8/15  & 2/5   & 23/30 & 1/6   & 14/15 & 0     & 2/3   \\
        1/2  & 7/15  & 8/15  & 11/30 & 19/30 & 1/3   & 2/3   & 0     \\
      \end{pmatrix}$
    \\
    \smallskip
    $\begin{pmatrix}
        0     & 1/15  & 1/5   & 4/15  & 4/15  & 11/30 & 13/30 & 7/15  \\
        1/15  & 0     & 4/15  & 1/5   & 4/15  & 13/30 & 11/30 & 8/15  \\
        1/5   & 4/15  & 0     & 7/15  & 1/3   & 1/6   & 19/30 & 4/15  \\
        4/15  & 1/5   & 7/15  & 0     & 1/3   & 19/30 & 1/6   & 11/15 \\
        4/15  & 4/15  & 1/3   & 1/3   & 0     & 13/30 & 13/30 & 1/2   \\
        11/30 & 13/30 & 1/6   & 19/30 & 13/30 & 0     & 4/5   & 1/10  \\
        13/30 & 11/30 & 19/30 & 1/6   & 13/30 & 4/5   & 0     & 9/10  \\
        7/15  & 8/15  & 4/15  & 11/15 & 1/2   & 1/10  & 9/10  & 0     \\
      \end{pmatrix}$
    \\
    \smallskip
    $\begin{pmatrix}
        0     & 1/15  & 1/10  & 1/6   & 1/5   & 3/10  & 11/30 & 7/15  \\
        1/15  & 0     & 1/6   & 1/10  & 1/5   & 11/30 & 3/10  & 8/15  \\
        1/10  & 1/6   & 0     & 4/15  & 7/30  & 1/5   & 7/15  & 11/30 \\
        1/6   & 1/10  & 4/15  & 0     & 7/30  & 7/15  & 1/5   & 19/30 \\
        1/5   & 1/5   & 7/30  & 7/30  & 0     & 11/30 & 11/30 & 1/2   \\
        3/10  & 11/30 & 1/5   & 7/15  & 11/30 & 0     & 2/3   & 1/6   \\
        11/30 & 3/10  & 7/15  & 1/5   & 11/30 & 2/3   & 0     & 5/6   \\
        7/15  & 8/15  & 11/30 & 19/30 & 1/2   & 1/6   & 5/6   & 0     \\
      \end{pmatrix}$
    \\
    \smallskip
    $\begin{pmatrix}
        0    & 1/10  & 2/15  & 3/10  & 3/10 & 1/3  & 2/5   & 7/15  \\
        1/10 & 0     & 7/30  & 1/5   & 2/5  & 3/10 & 3/10  & 17/30 \\
        2/15 & 7/30  & 0     & 13/30 & 1/6  & 2/5  & 8/15  & 1/3   \\
        3/10 & 1/5   & 13/30 & 0     & 3/5  & 3/10 & 1/10  & 23/30 \\
        3/10 & 2/5   & 1/6   & 3/5   & 0    & 1/2  & 7/10  & 1/6   \\
        1/3  & 3/10  & 2/5   & 3/10  & 1/2  & 0    & 1/3   & 3/5   \\
        2/5  & 3/10  & 8/15  & 1/10  & 7/10 & 1/3  & 0     & 13/15 \\
        7/15 & 17/30 & 1/3   & 23/30 & 1/6  & 3/5  & 13/15 & 0     \\
      \end{pmatrix}$
    \\
    \smallskip
    \hrule
    \smallskip
    $\begin{pmatrix}
        0         & 3/4 + x & 4 x       & 1/4 + 3 x & 1/2 + 2 x & 3/4 + x   \\
        3/4 + x   & 0       & 3/4 + x   & 2/3       & 1/2       & 1/3       \\
        4 x       & 3/4 + x & 0         & 1/4 - x   & 1/2 - 2 x & 3/4 - 3 x \\
        1/4 + 3 x & 2/3     & 1/4 - x   & 0         & 1/4 - x   & 1/2 - 2 x \\
        1/2 + 2 x & 1/2     & 1/2 - 2 x & 1/4 - x   & 0         & 1/4 - x   \\
        3/4 + x   & 1/3     & 3/4 - 3 x & 1/2 - 2 x & 1/4 - x   & 0         \\
      \end{pmatrix}
      \textup{ for } 0 \le x \le 1/12 $
    \\
    $\begin{pmatrix}
        0         & 2/3 + x & 2 x       & 1/2 + 2 x & 1/3 + x & 1/2 + x \\
        2/3 + x   & 0       & 2/3 - x   & 1/2 - x   & 1/3     & 1/6     \\
        2 x       & 2/3 - x & 0         & 1/2 + 2 x & 1/3 - x & 1/2 - x \\
        1/2 + 2 x & 1/2 - x & 1/2 + 2 x & 0         & 1/2 + x & 1/2     \\
        1/3 + x   & 1/3     & 1/3 - x   & 1/2 + x   & 0       & 1/6     \\
        1/2 + x   & 1/6     & 1/2 - x   & 1/2       & 1/6     & 0       \\
      \end{pmatrix}
      \textup{ for } 0 \le x \le 1/6 $
    \\
    \smallskip
    $\begin{pmatrix}
        0       & 1/2 + x & 2 x     & 2 x     & 1/2 + x & 1/2     \\
        1/2 + x & 0       & 1/2     & 1/2 - x & 1/3     & 1/3     \\
        2 x     & 1/2     & 0       & 2 x     & 1/2 - x & 1/2 + x \\
        2 x     & 1/2 - x & 2 x     & 0       & 1/2     & 1/2 - x \\
        1/2 + x & 1/3     & 1/2 - x & 1/2     & 0       & 2/3     \\
        1/2     & 1/3     & 1/2 + x & 1/2 - x & 2/3     & 0       \\
      \end{pmatrix}
      \textup{ for } 0 \le x \le 1/3 $
    \\
    \smallskip
    $\begin{pmatrix}
        0         & 1/2 + x & 4 x       & 1/2 + 3 x & 2 x     & 1/2 + x   \\
        1/2 + x   & 0       & 1/2 - x   & 1/3       & 1/2     & 1/3       \\
        4 x       & 1/2 - x & 0         & 1/2 - x   & 2 x     & 1/2 - 3 x \\
        1/2 + 3 x & 1/3     & 1/2 - x   & 0         & 1/2 + x & 2 x       \\
        2 x       & 1/2     & 2 x       & 1/2 + x   & 0       & 1/2 - x   \\
        1/2 + x   & 1/3     & 1/2 - 3 x & 2 x       & 1/2 - x & 0         \\
      \end{pmatrix}
      \textup{ for } 0 \le x \le 1/6 $
    \\
    \smallskip
    $\begin{pmatrix}
        0         & 2/3 + x   & 2/3 + 4 x & 3 x       & 1/3 + 2 x & 2/3 + x \\
        2/3 + x   & 0         & 3 x       & 2/3 - 2 x & 1/3 - x   & 1/3     \\
        2/3 + 4 x & 3 x       & 0         & 2/3 + x   & 1/3 + 2 x & 1/3 - x \\
        3 x       & 2/3 - 2 x & 2/3 + x   & 0         & 1/3 - x   & 2/3     \\
        1/3 + 2 x & 1/3 - x   & 1/3 + 2 x & 1/3 - x   & 0         & 1/2     \\
        2/3 + x   & 1/3       & 1/3 - x   & 2/3       & 1/2       & 0       \\
      \end{pmatrix}
      \textup{ for } 0 \le x \le 1/12 $
    \\
    \smallskip
    $\begin{pmatrix}
        0     & 1/21  & 5/42  & 1/6   & 2/7   & 10/21 \\
        1/21  & 0     & 1/6   & 5/42  & 2/7   & 11/21 \\
        5/42  & 1/6   & 0     & 2/7   & 13/42 & 5/14  \\
        1/6   & 5/42  & 2/7   & 0     & 13/42 & 9/14  \\
        2/7   & 2/7   & 13/42 & 13/42 & 0     & 1/2   \\
        10/21 & 11/21 & 5/14  & 9/14  & 1/2   & 0     \\
      \end{pmatrix}$, \hspace*{0.25in}
    $\begin{pmatrix}
        0    & 1/14  & 1/14  & 8/21  & 8/21  & 1/2   \\
        1/14 & 0     & 1/7   & 13/42 & 19/42 & 19/42 \\
        1/14 & 1/7   & 0     & 19/42 & 13/42 & 23/42 \\
        8/21 & 13/42 & 19/42 & 0     & 16/21 & 2/7   \\
        8/21 & 19/42 & 13/42 & 16/21 & 0     & 5/7   \\
        1/2  & 19/42 & 23/42 & 2/7   & 5/7   & 0     \\
      \end{pmatrix}$
    \\
    \smallskip
    $\begin{pmatrix}
        0    & 2/15  & 2/15  & 3/10  & 3/10  & 1/2   \\
        2/15 & 0     & 4/15  & 1/6   & 13/30 & 7/15  \\
        2/15 & 4/15  & 0     & 13/30 & 1/6   & 8/15  \\
        3/10 & 1/6   & 13/30 & 0     & 3/5   & 13/30 \\
        3/10 & 13/30 & 1/6   & 3/5   & 0     & 17/30 \\
        1/2  & 7/15  & 8/15  & 13/30 & 17/30 & 0     \\
      \end{pmatrix}$, \hspace*{0.25in}
    $\begin{pmatrix}
        0     & 1/21  & 1/7   & 11/42 & 13/42 & 10/21 \\
        1/21  & 0     & 1/7   & 13/42 & 11/42 & 11/21 \\
        1/7   & 1/7   & 0     & 13/42 & 13/42 & 1/2   \\
        11/42 & 13/42 & 13/42 & 0     & 4/7   & 3/14  \\
        13/42 & 11/42 & 13/42 & 4/7   & 0     & 11/14 \\
        10/21 & 11/21 & 1/2   & 3/14  & 11/14 & 0     \\
      \end{pmatrix}$
    \\
    \smallskip
    $\begin{pmatrix}
        0     & 1/10 & 1/10 & 11/30 & 11/30 & 1/2  \\
        1/10  & 0    & 1/5  & 4/15  & 7/15  & 7/15 \\
        1/10  & 1/5  & 0    & 7/15  & 4/15  & 8/15 \\
        11/30 & 4/15 & 7/15 & 0     & 11/15 & 2/5  \\
        11/30 & 7/15 & 4/15 & 11/15 & 0     & 3/5  \\
        1/2   & 7/15 & 8/15 & 2/5   & 3/5   & 0    \\
      \end{pmatrix}$, \hspace*{0.25in}
    $\begin{pmatrix}
        0    & 1/14  & 2/21  & 2/7   & 5/14  & 8/21  \\
        1/14 & 0     & 1/6   & 11/42 & 2/7   & 19/42 \\
        2/21 & 1/6   & 0     & 1/3   & 19/42 & 2/7   \\
        2/7  & 11/42 & 1/3   & 0     & 13/42 & 11/21 \\
        5/14 & 2/7   & 19/42 & 13/42 & 0     & 31/42 \\
        8/21 & 19/42 & 2/7   & 11/21 & 31/42 & 0     \\
      \end{pmatrix}$
    \\
    \smallskip
    $\begin{pmatrix}
        0     & 1/15  & 7/60  & 11/60 & 7/20  & 7/15  \\
        1/15  & 0     & 11/60 & 7/60  & 7/20  & 8/15  \\
        7/60  & 11/60 & 0     & 3/10  & 11/30 & 7/20  \\
        11/60 & 7/60  & 3/10  & 0     & 11/30 & 13/20 \\
        7/20  & 7/20  & 11/30 & 11/30 & 0     & 1/2   \\
        7/15  & 8/15  & 7/20  & 13/20 & 1/2   & 0     \\
      \end{pmatrix}$, \hspace*{0.25in}
    $\begin{pmatrix}
        0     & 2/15  & 2/15  & 13/30 & 13/30 & 1/2   \\
        2/15  & 0     & 4/15  & 3/10  & 17/30 & 2/5   \\
        2/15  & 4/15  & 0     & 17/30 & 3/10  & 3/5   \\
        13/30 & 3/10  & 17/30 & 0     & 13/15 & 7/30  \\
        13/30 & 17/30 & 3/10  & 13/15 & 0     & 23/30 \\
        1/2   & 2/5   & 3/5   & 7/30  & 23/30 & 0     \\
      \end{pmatrix}$
    \\
    \smallskip
    $\begin{pmatrix}
        0     & 1/6   & 3/14  & 17/42 & 3/7   & 19/42 \\
        1/6   & 0     & 8/21  & 5/21  & 25/42 & 3/7   \\
        3/14  & 8/21  & 0     & 13/21 & 3/14  & 1/2   \\
        17/42 & 5/21  & 13/21 & 0     & 5/6   & 3/7   \\
        3/7   & 25/42 & 3/14  & 5/6   & 0     & 23/42 \\
        19/42 & 3/7   & 1/2   & 3/7   & 23/42 & 0     \\
      \end{pmatrix}$, \hspace*{0.25in}
    $\begin{pmatrix}
        0     & 2/15  & 1/6   & 4/15  & 1/3   & 13/30 \\
        2/15  & 0     & 1/6   & 1/3   & 4/15  & 17/30 \\
        1/6   & 1/6   & 0     & 13/30 & 13/30 & 1/2   \\
        4/15  & 1/3   & 13/30 & 0     & 4/15  & 11/30 \\
        1/3   & 4/15  & 13/30 & 4/15  & 0     & 19/30 \\
        13/30 & 17/30 & 1/2   & 11/30 & 19/30 & 0     \\
      \end{pmatrix}$
    \\
    \smallskip
    $\begin{pmatrix}
        0     & 7/60  & 1/6   & 17/60 & 2/5  & 5/12 \\
        7/60  & 0     & 17/60 & 2/5   & 5/12 & 3/10 \\
        1/6   & 17/60 & 0     & 7/60  & 2/5  & 7/12 \\
        17/60 & 2/5   & 7/60  & 0     & 5/12 & 7/10 \\
        2/5   & 5/12  & 2/5   & 5/12  & 0    & 1/2  \\
        5/12  & 3/10  & 7/12  & 7/10  & 1/2  & 0    \\
      \end{pmatrix}$, \hspace*{0.25in}
    $\begin{pmatrix}
        0    & 1/15  & 1/15  & 9/20  & 9/20  & 1/2   \\
        1/15 & 0     & 2/15  & 23/60 & 31/60 & 9/20  \\
        1/15 & 2/15  & 0     & 31/60 & 23/60 & 11/20 \\
        9/20 & 23/60 & 31/60 & 0     & 9/10  & 7/30  \\
        9/20 & 31/60 & 23/60 & 9/10  & 0     & 23/30 \\
        1/2  & 9/20  & 11/20 & 7/30  & 23/30 & 0     \\
      \end{pmatrix}$
    \\
    \smallskip
    $\begin{pmatrix}
        0     & 1/6   & 1/6   & 11/30 & 11/30 & 1/2   \\
        1/6   & 0     & 4/15  & 1/3   & 8/15  & 11/30 \\
        1/6   & 4/15  & 0     & 8/15  & 1/3   & 19/30 \\
        11/30 & 1/3   & 8/15  & 0     & 8/15  & 7/30  \\
        11/30 & 8/15  & 1/3   & 8/15  & 0     & 23/30 \\
        1/2   & 11/30 & 19/30 & 7/30  & 23/30 & 0     \\
      \end{pmatrix}$, \hspace*{0.25in}
    $\begin{pmatrix}
        0     & 2/15  & 2/15 & 1/5  & 4/15 & 13/30 \\
        2/15  & 0     & 4/15 & 1/5  & 2/15 & 17/30 \\
        2/15  & 4/15  & 0    & 4/15 & 2/5  & 3/10  \\
        1/5   & 1/5   & 4/15 & 0    & 4/15 & 1/2   \\
        4/15  & 2/15  & 2/5  & 4/15 & 0    & 7/10  \\
        13/30 & 17/30 & 3/10 & 1/2  & 7/10 & 0     \\
      \end{pmatrix}$
    \\
    \smallskip
    $\begin{pmatrix}
        0     & 2/21  & 3/14  & 5/14  & 3/7   & 10/21 \\
        2/21  & 0     & 13/42 & 11/42 & 8/21  & 4/7   \\
        3/14  & 13/42 & 0     & 4/7   & 23/42 & 11/42 \\
        5/14  & 11/42 & 4/7   & 0     & 13/42 & 5/6   \\
        3/7   & 8/21  & 23/42 & 13/42 & 0     & 2/3   \\
        10/21 & 4/7   & 11/42 & 5/6   & 2/3   & 0     \\
      \end{pmatrix}$, \hspace*{0.25in}
    $\begin{pmatrix}
        0     & 1/15  & 3/20  & 19/60 & 23/60 & 7/15  \\
        1/15  & 0     & 3/20  & 23/60 & 19/60 & 8/15  \\
        3/20  & 3/20  & 0     & 11/30 & 11/30 & 1/2   \\
        19/60 & 23/60 & 11/30 & 0     & 7/10  & 3/20  \\
        23/60 & 19/60 & 11/30 & 7/10  & 0     & 17/20 \\
        7/15  & 8/15  & 1/2   & 3/20  & 17/20 & 0     \\
      \end{pmatrix}$
    \\
    \smallskip
    $\begin{pmatrix}
        0     & 1/10  & 11/60 & 17/60 & 13/30 & 9/20  \\
        1/10  & 0     & 17/60 & 11/60 & 13/30 & 11/20 \\
        11/60 & 17/60 & 0     & 7/15  & 9/20  & 4/15  \\
        17/60 & 11/60 & 7/15  & 0     & 9/20  & 11/15 \\
        13/30 & 13/30 & 9/20  & 9/20  & 0     & 1/2   \\
        9/20  & 11/20 & 4/15  & 11/15 & 1/2   & 0     \\
      \end{pmatrix}$, \hspace*{0.25in}
    $\begin{pmatrix}
        0    & 2/21  & 2/21  & 5/14  & 5/14  & 1/2   \\
        2/21 & 0     & 4/21  & 11/42 & 19/42 & 3/7   \\
        2/21 & 4/21  & 0     & 19/42 & 11/42 & 4/7   \\
        5/14 & 11/42 & 19/42 & 0     & 5/7   & 11/42 \\
        5/14 & 19/42 & 11/42 & 5/7   & 0     & 31/42 \\
        1/2  & 3/7   & 4/7   & 11/42 & 31/42 & 0     \\
      \end{pmatrix}$
    \\
    \smallskip
    $\begin{pmatrix}
        0    & 1/10  & 1/10  & 5/12  & 5/12  & 1/2  \\
        1/10 & 0     & 1/5   & 19/60 & 31/60 & 5/12 \\
        1/10 & 1/5   & 0     & 31/60 & 19/60 & 7/12 \\
        5/12 & 19/60 & 31/60 & 0     & 5/6   & 1/5  \\
        5/12 & 31/60 & 19/60 & 5/6   & 0     & 4/5  \\
        1/2  & 5/12  & 7/12  & 1/5   & 4/5   & 0    \\
      \end{pmatrix}$, \hspace*{0.25in}
    $\begin{pmatrix}
        0    & 1/15 & 7/30  & 3/10  & 2/5   & 7/15  \\
        1/15 & 0    & 3/10  & 7/30  & 2/5   & 8/15  \\
        7/30 & 3/10 & 0     & 8/15  & 13/30 & 7/30  \\
        3/10 & 7/30 & 8/15  & 0     & 13/30 & 23/30 \\
        2/5  & 2/5  & 13/30 & 13/30 & 0     & 1/2   \\
        7/15 & 8/15 & 7/30  & 23/30 & 1/2   & 0     \\
      \end{pmatrix}$
    \\
    \smallskip
    $\begin{pmatrix}
        0     & 5/42  & 5/21 & 17/42 & 3/7   & 10/21 \\
        5/42  & 0     & 5/14 & 2/7   & 17/42 & 25/42 \\
        5/21  & 5/14  & 0    & 9/14  & 1/2   & 5/21  \\
        17/42 & 2/7   & 9/14 & 0     & 17/42 & 37/42 \\
        3/7   & 17/42 & 1/2  & 17/42 & 0     & 4/7   \\
        10/21 & 25/42 & 5/21 & 37/42 & 4/7   & 0     \\
      \end{pmatrix}$, \hspace*{0.25in}
    $\begin{pmatrix}
        0    & 1/15 & 2/15 & 1/6  & 7/30 & 7/15 \\
        1/15 & 0    & 2/15 & 7/30 & 1/6  & 8/15 \\
        2/15 & 2/15 & 0    & 7/30 & 7/30 & 1/2  \\
        1/6  & 7/30 & 7/30 & 0    & 2/5  & 3/10 \\
        7/30 & 1/6  & 7/30 & 2/5  & 0    & 7/10 \\
        7/15 & 8/15 & 1/2  & 3/10 & 7/10 & 0    \\
      \end{pmatrix}$
    \\
    \smallskip
    \hrule
    \smallskip
    $\begin{pmatrix}
        0         & 2/3 + x & 1/6 + 2 x & 1/3 + 2 x & 1/2 + 2 x \\
        2/3 + x   & 0       & 5/6 - x   & 1/3 - x   & 1/6 + x   \\
        1/6 + 2 x & 5/6 - x & 0         & 1/2 + 2 x & 2/3 - 2 x \\
        1/3 + 2 x & 1/3 - x & 1/2 + 2 x & 0         & 1/6 + 2 x \\
        1/2 + 2 x & 1/6 + x & 2/3 - 2 x & 1/6 + 2 x & 0         \\
      \end{pmatrix}
      \textup{ for } 0 \le x \le 1/6 $
    \\
    \smallskip
    $\begin{pmatrix}
        0         & 2/3 + x   & 1/6 + 3 x & 1/6 + 2 x & 1/2 + x \\
        2/3 + x   & 0         & 5/6 - 2 x & 1/2 - x   & 1/6     \\
        1/6 + 3 x & 5/6 - 2 x & 0         & 1/3 + 3 x & 2/3     \\
        1/6 + 2 x & 1/2 - x   & 1/3 + 3 x & 0         & 1/3 - x \\
        1/2 + x   & 1/6       & 2/3       & 1/3 - x   & 0       \\
      \end{pmatrix}
      \textup{ for } 0 \le x \le 1/6 $
    \\
    \smallskip
    $\begin{pmatrix}
        0     & 7/60 & 13/60 & 7/30  & 7/20 \\
        7/60  & 0    & 7/30  & 7/20  & 7/15 \\
        13/60 & 7/30 & 0     & 19/60 & 2/5  \\
        7/30  & 7/20 & 19/60 & 0     & 7/60 \\
        7/20  & 7/15 & 2/5   & 7/60  & 0    \\
      \end{pmatrix}$, \hspace*{0.25in}
    $\begin{pmatrix}
        0     & 4/21  & 2/7   & 17/42 & 3/7   \\
        4/21  & 0     & 3/7   & 25/42 & 2/7   \\
        2/7   & 3/7   & 0     & 11/42 & 10/21 \\
        17/42 & 25/42 & 11/42 & 0     & 31/42 \\
        3/7   & 2/7   & 10/21 & 31/42 & 0     \\
      \end{pmatrix}$
    \\
    \smallskip
    $\begin{pmatrix}
        0    & 1/10 & 1/3   & 7/20 & 9/20  \\
        1/10 & 0    & 2/5   & 9/20 & 7/20  \\
        1/3  & 2/5  & 0     & 1/4  & 13/20 \\
        7/20 & 9/20 & 1/4   & 0    & 4/5   \\
        9/20 & 7/20 & 13/20 & 4/5  & 0     \\
      \end{pmatrix}$, \hspace*{0.25in}
    $\begin{pmatrix}
        0     & 1/7   & 4/21  & 5/21 & 19/42 \\
        1/7   & 0     & 5/21  & 8/21 & 25/42 \\
        4/21  & 5/21  & 0     & 2/7  & 19/42 \\
        5/21  & 8/21  & 2/7   & 0    & 3/14  \\
        19/42 & 25/42 & 19/42 & 3/14 & 0     \\
      \end{pmatrix}$
    \\
    \smallskip
    $\begin{pmatrix}
        0    & 1/5  & 2/5  & 2/5  & 7/15 \\
        1/5  & 0    & 8/15 & 3/5  & 3/5  \\
        2/5  & 8/15 & 0    & 4/15 & 8/15 \\
        2/5  & 3/5  & 4/15 & 0    & 4/15 \\
        7/15 & 3/5  & 8/15 & 4/15 & 0    \\
      \end{pmatrix}$, \hspace*{0.25in}
    $\begin{pmatrix}
        0   & 1/7 & 1/3 & 3/7 & 3/7 \\
        1/7 & 0   & 3/7 & 1/3 & 4/7 \\
        1/3 & 3/7 & 0   & 3/7 & 2/7 \\
        3/7 & 1/3 & 3/7 & 0   & 5/7 \\
        3/7 & 4/7 & 2/7 & 5/7 & 0   \\
      \end{pmatrix}$
    \\
    \smallskip
    $\begin{pmatrix}
        0     & 1/14  & 5/21  & 3/7   & 10/21 \\
        1/14  & 0     & 13/42 & 17/42 & 17/42 \\
        5/21  & 13/42 & 0     & 11/21 & 5/7   \\
        3/7   & 17/42 & 11/21 & 0     & 8/21  \\
        10/21 & 17/42 & 5/7   & 8/21  & 0     \\
      \end{pmatrix}$, \hspace*{0.25in}
    $\begin{pmatrix}
        0     & 1/6   & 11/60 & 11/60 & 7/20  \\
        1/6   & 0     & 17/60 & 7/20  & 11/60 \\
        11/60 & 17/60 & 0     & 1/5   & 13/30 \\
        11/60 & 7/20  & 1/5   & 0     & 8/15  \\
        7/20  & 11/60 & 13/30 & 8/15  & 0     \\
      \end{pmatrix}$
    \\
    \smallskip
    $\begin{pmatrix}
        0    & 2/15 & 2/15 & 1/5  & 1/5  \\
        2/15 & 0    & 4/15 & 1/5  & 4/15 \\
        2/15 & 4/15 & 0    & 4/15 & 1/5  \\
        1/5  & 1/5  & 4/15 & 0    & 2/5  \\
        1/5  & 4/15 & 1/5  & 2/5  & 0    \\
      \end{pmatrix}$, \hspace*{0.25in}
    $\begin{pmatrix}
        0    & 2/15  & 3/10  & 1/3  & 2/5  \\
        2/15 & 0     & 13/30 & 2/5  & 1/2  \\
        3/10 & 13/30 & 0     & 3/10 & 7/30 \\
        1/3  & 2/5   & 3/10  & 0    & 8/15 \\
        2/5  & 1/2   & 7/30  & 8/15 & 0    \\
      \end{pmatrix}$
    \\
    \smallskip
    $\begin{pmatrix}
        0     & 11/60 & 1/5   & 4/15  & 9/20  \\
        11/60 & 0     & 13/60 & 9/20  & 19/30 \\
        1/5   & 13/60 & 0     & 11/30 & 31/60 \\
        4/15  & 9/20  & 11/30 & 0     & 11/60 \\
        9/20  & 19/30 & 31/60 & 11/60 & 0     \\
      \end{pmatrix}$, \hspace*{0.25in}
    $\begin{pmatrix}
        0     & 1/10  & 19/60 & 11/30 & 5/12  \\
        1/10  & 0     & 5/12  & 13/30 & 19/60 \\
        19/60 & 5/12  & 0     & 1/4   & 11/15 \\
        11/30 & 13/30 & 1/4   & 0     & 13/20 \\
        5/12  & 19/60 & 11/15 & 13/20 & 0     \\
      \end{pmatrix}$
    \\
    \smallskip
    $\begin{pmatrix}
        0    & 3/20 & 3/20 & 7/20 & 7/20 \\
        3/20 & 0    & 3/10 & 1/3  & 2/5  \\
        3/20 & 3/10 & 0    & 2/5  & 1/3  \\
        7/20 & 1/3  & 2/5  & 0    & 7/10 \\
        7/20 & 2/5  & 1/3  & 7/10 & 0    \\
      \end{pmatrix}$, \hspace*{0.25in}
    $\begin{pmatrix}
        0     & 3/20  & 11/60 & 7/30  & 23/60 \\
        3/20  & 0     & 7/30  & 23/60 & 8/15  \\
        11/60 & 7/30  & 0     & 17/60 & 2/5   \\
        7/30  & 23/60 & 17/60 & 0     & 3/20  \\
        23/60 & 8/15  & 2/5   & 3/20  & 0     \\
      \end{pmatrix}$
    \\
    \smallskip
    $\begin{pmatrix}
        0     & 1/10 & 1/6   & 4/15  & 11/30 \\
        1/10  & 0    & 4/15  & 7/30  & 4/15  \\
        1/6   & 4/15 & 0     & 11/30 & 8/15  \\
        4/15  & 7/30 & 11/30 & 0     & 3/10  \\
        11/30 & 4/15 & 8/15  & 3/10  & 0     \\
      \end{pmatrix}$, \hspace*{0.25in}
    $\begin{pmatrix}
        0    & 2/15 & 1/6   & 1/6  & 7/15  \\
        2/15 & 0    & 7/30  & 3/10 & 3/5   \\
        1/6  & 7/30 & 0     & 1/5  & 13/30 \\
        1/6  & 3/10 & 1/5   & 0    & 3/10  \\
        7/15 & 3/5  & 13/30 & 3/10 & 0     \\
      \end{pmatrix}$
    \\
    \smallskip
    $\begin{pmatrix}
        0     & 2/15  & 3/20  & 17/60 & 2/5   \\
        2/15  & 0     & 17/60 & 3/20  & 13/30 \\
        3/20  & 17/60 & 0     & 13/30 & 23/60 \\
        17/60 & 3/20  & 13/30 & 0     & 29/60 \\
        2/5   & 13/30 & 23/60 & 29/60 & 0     \\
      \end{pmatrix}$, \hspace*{0.25in}
    $\begin{pmatrix}
        0     & 1/15  & 11/30 & 23/60 & 9/20  \\
        1/15  & 0     & 2/5   & 9/20  & 23/60 \\
        11/30 & 2/5   & 0     & 19/60 & 37/60 \\
        23/60 & 9/20  & 19/60 & 0     & 5/6   \\
        9/20  & 23/60 & 37/60 & 5/6   & 0     \\
      \end{pmatrix}$
    \\
    \smallskip
    $\begin{pmatrix}
        0    & 1/18 & 1/9   & 1/6 & 5/18  \\
        1/18 & 0    & 5/18  & 1/9 & 2/9   \\
        1/9  & 5/18 & 0     & 1/2 & 13/18 \\
        1/6  & 1/9  & 1/2   & 0   & 1/9   \\
        5/18 & 2/9  & 13/18 & 1/9 & 0     \\
      \end{pmatrix}$, \hspace*{0.25in}
    $\begin{pmatrix}
        0    & 7/60 & 7/30  & 3/10  & 5/12 \\
        7/60 & 0    & 1/4   & 5/12  & 8/15 \\
        7/30 & 1/4  & 0     & 11/30 & 9/20 \\
        3/10 & 5/12 & 11/30 & 0     & 7/60 \\
        5/12 & 8/15 & 9/20  & 7/60  & 0    \\
      \end{pmatrix}$
    \\
    \smallskip
    $\begin{pmatrix}
        0     & 2/15  & 1/5   & 7/20  & 29/60 \\
        2/15  & 0     & 7/30  & 29/60 & 7/20  \\
        1/5   & 7/30  & 0     & 23/60 & 29/60 \\
        7/20  & 29/60 & 23/60 & 0     & 5/6   \\
        29/60 & 7/20  & 29/60 & 5/6   & 0     \\
      \end{pmatrix}$, \hspace*{0.25in}
    $\begin{pmatrix}
        0    & 2/15  & 7/30 & 3/10  & 1/2   \\
        2/15 & 0     & 7/30 & 13/30 & 7/15  \\
        7/30 & 7/30  & 0    & 2/5   & 7/10  \\
        3/10 & 13/30 & 2/5  & 0     & 17/30 \\
        1/2  & 7/15  & 7/10 & 17/30 & 0     \\
      \end{pmatrix}$
    \\
    \smallskip
    $\begin{pmatrix}
        0    & 3/20 & 1/5  & 3/10 & 9/20 \\
        3/20 & 0    & 1/4  & 9/20 & 3/5  \\
        1/5  & 1/4  & 0    & 1/3  & 9/20 \\
        3/10 & 9/20 & 1/3  & 0    & 3/20 \\
        9/20 & 3/5  & 9/20 & 3/20 & 0    \\
      \end{pmatrix}$, \hspace*{0.25in}
    $\begin{pmatrix}
        0   & 1/7 & 1/7 & 2/7 & 2/7 \\
        1/7 & 0   & 2/7 & 2/7 & 1/3 \\
        1/7 & 2/7 & 0   & 1/3 & 2/7 \\
        2/7 & 2/7 & 1/3 & 0   & 4/7 \\
        2/7 & 1/3 & 2/7 & 4/7 & 0   \\
      \end{pmatrix}$
    \\
    \smallskip
    $\begin{pmatrix}
        0     & 3/20  & 1/6   & 13/60 & 19/60 \\
        3/20  & 0     & 19/60 & 1/5   & 7/15  \\
        1/6   & 19/60 & 0     & 19/60 & 3/20  \\
        13/60 & 1/5   & 19/60 & 0     & 13/30 \\
        19/60 & 7/15  & 3/20  & 13/30 & 0     \\
      \end{pmatrix}$, \hspace*{0.25in}
    $\begin{pmatrix}
        0     & 2/15  & 2/5  & 13/30 & 1/2   \\
        2/15  & 0     & 1/2  & 17/30 & 2/5   \\
        2/5   & 1/2   & 0    & 7/30  & 8/15  \\
        13/30 & 17/30 & 7/30 & 0     & 23/30 \\
        1/2   & 2/5   & 8/15 & 23/30 & 0     \\
      \end{pmatrix}$
    \\
    \smallskip
    $\begin{pmatrix}
        0    & 1/15  & 7/20  & 2/5   & 5/12  \\
        1/15 & 0     & 5/12  & 13/30 & 7/20  \\
        7/20 & 5/12  & 0     & 19/60 & 23/30 \\
        2/5  & 13/30 & 19/60 & 0     & 37/60 \\
        5/12 & 7/20  & 23/30 & 37/60 & 0     \\
      \end{pmatrix}$, \hspace*{0.25in}
    $\begin{pmatrix}
        0    & 2/15 & 1/6  & 1/6   & 7/15  \\
        2/15 & 0    & 7/30 & 3/10  & 1/2   \\
        1/6  & 7/30 & 0    & 1/5   & 3/10  \\
        1/6  & 3/10 & 1/5  & 0     & 13/30 \\
        7/15 & 1/2  & 3/10 & 13/30 & 0     \\
      \end{pmatrix}$
    \\
    \smallskip
    $\begin{pmatrix}
        0     & 3/20 & 13/60 & 5/12  & 13/30 \\
        3/20  & 0    & 7/30  & 4/15  & 7/12  \\
        13/60 & 7/30 & 0     & 2/5   & 29/60 \\
        5/12  & 4/15 & 2/5   & 0     & 17/20 \\
        13/30 & 7/12 & 29/60 & 17/20 & 0     \\
      \end{pmatrix}$
    \\

  }

\subsection{Regge orbits of rational-angle 4-line configurations}

Here we list representatives of the $\Regge^{\pm}$-orbits
of $\R^3$-realizable $4 \times 4$ rational-angle matrices,
with each angle measured in units of $\pi$.
We exclude any orbit containing a representative
obtained from a matrix in Section~\ref{S:15 to 5}
or from a perpendicular configuration.
In each orbit, we choose the representative
for which the sum of the denominators of the matrix entries is smallest,
and among those, we choose the one that is lexicographically smallest.

\smallskip
\hrule
\smallskip

{\Small
  \centering

  $\begin{pmatrix}
      0     & 1/20 & 11/60 & 11/30 \\
      1/20  & 0    & 1/5   & 5/12  \\
      11/60 & 1/5  & 0     & 7/20  \\
      11/30 & 5/12 & 7/20  & 0     \\
    \end{pmatrix}$, \hspace*{0.1in}
  $\begin{pmatrix}
      0     & 1/14  & 13/84 & 25/84 \\
      1/14  & 0     & 19/84 & 9/28  \\
      13/84 & 19/84 & 0     & 2/7   \\
      25/84 & 9/28  & 2/7   & 0     \\
    \end{pmatrix}$, \hspace*{0.1in}
  $\begin{pmatrix}
      0     & 11/90 & 4/15 & 22/45 \\
      11/90 & 0     & 7/18 & 17/30 \\
      4/15  & 7/18  & 0    & 1/3   \\
      22/45 & 17/30 & 1/3  & 0     \\
    \end{pmatrix}$
  \\
  \medskip
  $\begin{pmatrix}
      0      & 3/20   & 19/120 & 5/24  \\
      3/20   & 0      & 37/120 & 13/40 \\
      19/120 & 37/120 & 0      & 3/20  \\
      5/24   & 13/40  & 3/20   & 0     \\
    \end{pmatrix}$, \hspace*{0.05in}
  $\begin{pmatrix}
      0    & 1/15  & 5/18  & 1/3   \\
      1/15 & 0     & 31/90 & 17/45 \\
      5/18 & 31/90 & 0     & 7/30  \\
      1/3  & 17/45 & 7/30  & 0     \\
    \end{pmatrix}$
  \\
  \medskip
  $\begin{pmatrix}
      0      & 2/15   & 3/14   & 44/105 \\
      2/15   & 0      & 73/210 & 3/7    \\
      3/14   & 73/210 & 0      & 13/30  \\
      44/105 & 3/7    & 13/30  & 0      \\
    \end{pmatrix}$, \hspace*{0.1in}
  $\begin{pmatrix}
      0    & 2/35 & 2/7   & 3/10  \\
      2/35 & 0    & 1/3   & 5/14  \\
      2/7  & 1/3  & 0     & 11/70 \\
      3/10 & 5/14 & 11/70 & 0     \\
    \end{pmatrix}$
  \\
  \medskip
  $\begin{pmatrix}
      0      & 7/120  & 3/10   & 43/120 \\
      7/120  & 0      & 43/120 & 2/5    \\
      3/10   & 43/120 & 0      & 9/40   \\
      43/120 & 2/5    & 9/40   & 0      \\
    \end{pmatrix}$, \hspace*{0.1in}
  $\begin{pmatrix}
      0    & 1/18  & 1/3   & 7/15  \\
      1/18 & 0     & 11/30 & 37/90 \\
      1/3  & 11/30 & 0     & 29/45 \\
      7/15 & 37/90 & 29/45 & 0     \\
    \end{pmatrix}$
  \\
  \medskip
  $\begin{pmatrix}
      0     & 1/10  & 3/14  & 27/70 \\
      1/10  & 0     & 11/35 & 3/7   \\
      3/14  & 11/35 & 0     & 1/3   \\
      27/70 & 3/7   & 1/3   & 0     \\
    \end{pmatrix}$, \hspace*{0.1in}
  $\begin{pmatrix}
      0    & 5/84 & 1/4   & 5/14  \\
      5/84 & 0    & 2/7   & 5/12  \\
      1/4  & 2/7  & 0     & 23/84 \\
      5/14 & 5/12 & 23/84 & 0     \\
    \end{pmatrix}$
  \\
  \medskip
  $\begin{pmatrix}
      0     & 3/14  & 3/10  & 29/70 \\
      3/14  & 0     & 18/35 & 1/3   \\
      3/10  & 18/35 & 0     & 4/7   \\
      29/70 & 1/3   & 4/7   & 0     \\
    \end{pmatrix}$, \hspace*{0.1in}
  $\begin{pmatrix}
      0    & 8/45  & 3/10  & 1/3   \\
      8/45 & 0     & 43/90 & 2/5   \\
      3/10 & 43/90 & 0     & 31/90 \\
      1/3  & 2/5   & 31/90 & 0     \\
    \end{pmatrix}$

}

%****************************************************************************
\section*{Acknowledgments}

The authors thank Jack Huizenga, Igor Rivin, 
and Justin Roberts for helpful discussions.

\begin{bibdiv}
  \begin{biblist}

    % \bibselect{big}

    \bib{Akopyan-Izmestiev2019}{article}{
    author={Akopyan, Arseniy},
    author={Izmestiev, Ivan},
    title={The Regge symmetry, confocal conics, and the Schl\"{a}fli formula},
    journal={Bull. Lond. Math. Soc.},
    volume={51},
    date={2019},
    number={5},
    pages={765--775},
    issn={0024-6093},
    review={\MR{4022424}},
    doi={10.1112/blms.12276},
    }

    \bib{Aliev-Smyth2012}{article}{
      author={Aliev, Iskander},
      author={Smyth, Chris},
      title={Solving algebraic equations in roots of unity},
      journal={Forum Math.},
      volume={24},
      date={2012},
      number={3},
      pages={641--665},
      issn={0933-7741},
      review={\MR{2926639}},
      doi={10.1515/form.2011.087},
    }

    \bib{cite:bailey}{manual}{
      author={Bailey, David H.},
      title={QD: A double-double and quad-double package for Fortran and C++, version 2.3.22},
      date={2019},
      note={\url{https://www.davidhbailey.com/dhbsoftware/}},
      label={QD},
    }

    \bib{Beukers-Smyth2000}{article}{
      author={Beukers, F.},
      author={Smyth, C. J.},
      title={Cyclotomic points on curves},
      conference={
          title={Number theory for the millennium, I},
          address={Urbana, IL},
          date={2000},
        },
      book={
          publisher={A K Peters, Natick, MA},
        },
      date={2002},
      pages={67--85},
      review={\MR{1956219}},
    }

    \bib{Boltianskii1978}{book}{
      author={Boltianski\u{\i}, V. G.},
      title={Hilbert's third problem},
      note={Translated from the Russian by Richard A. Silverman;
          With a foreword by Albert B. J. Novikoff;
          Scripta Series in Mathematics},
      publisher={V. H. Winston \& Sons, Washington, D.C.; Halsted Press [John
              Wiley \&\ Sons], New York-Toronto-London},
      date={1978},
      pages={x+228},
      isbn={0-470-26289-3},
      review={\MR{0500434}},
    }

    \bib{Boroczky-Glazyrin2017}{misc}{
      author={B\"or\"oczky, K.},
      author={Glazyrin, A.},
      title={Stability of optimal spherical codes},
      date={2017-11-16},
      note={Preprint, \texttt{arXiv:1711.06012v1}},
    }

    \bib{Conway-Jones1976}{article}{
      author={Conway, J. H.},
      author={Jones, A. J.},
      title={Trigonometric Diophantine equations (On vanishing sums of roots of
          unity)},
      journal={Acta Arith.},
      volume={30},
      date={1976},
      number={3},
      pages={229--240},
      issn={0065-1036},
      review={\MR{422149}},
      doi={10.4064/aa-30-3-229-240},
    }

    \bib{Coxeter1948}{book}{
      author={Coxeter, H. S. M.},
      title={Regular Polytopes},
      publisher={Methuen \& Co., Ltd., London},
      date={1948},
      pages={xix+321 pp. (8 plates)},
      review={\MR{0027148}},
    }

    \bib{deBruijn1953}{article}{
      author={de Bruijn, N. G.},
      title={On the factorization of cyclic groups},
      journal={Nederl. Akad. Wetensch. Proc. Ser. A. \textbf{56} = Indagationes
          Math.},
      volume={15},
      date={1953},
      pages={370--377},
      review={\MR{0059271}},
    }

    \bib{Debrunner1980}{article}{
    author={Debrunner, Hans E.},
    title={\"{U}ber Zerlegungsgleichheit von Pflasterpolyedern mit W\"{u}rfeln},
    language={German},
    journal={Arch. Math. (Basel)},
    volume={35},
    date={1980},
    number={6},
    pages={583--587 (1981)},
    issn={0003-889X},
    review={\MR{604258}},
    doi={10.1007/BF01235384},
    }

    \bib{Dehn1901}{article}{
      author={Dehn, M.},
      title={Ueber den Rauminhalt},
      language={German},
      journal={Math. Ann.},
      volume={55},
      date={1901},
      number={3},
      pages={465--478},
      issn={0025-5831},
      review={\MR{1511157}},
      doi={10.1007/BF01448001},
    }
    
    % added by hand
\bib{Dvornicich-Veneziano-Zannier-preprint}{misc}{
    author={Dvornicich, Roberto},
    author={Veneziano, Francesco},
    author={Zannier, Umberto},
     title={Rational angles in plane lattices},
      date={2020-05-27},
      note={Preprint, \texttt{arXiv:2005.13598v1}\phantom{i}},
}

\bib{Dvornicich-Zannier2002}{article}{
   author={Dvornicich, Roberto},
   author={Zannier, Umberto},
   title={Sums of roots of unity vanishing modulo a prime},
   journal={Arch. Math. (Basel)},
   volume={79},
   date={2002},
   number={2},
   pages={104--108},
   issn={0003-889X},
   review={\MR{1925376}},
   doi={10.1007/s00013-002-8291-4},
}

    \bib{Filaseta-Granville-Schinzel2008}{article}{
      author={Filaseta, Michael},
      author={Granville, Andrew},
      author={Schinzel, Andrzej},
      title={Irreducibility and greatest common divisor algorithms for sparse
          polynomials},
      conference={
          title={Number theory and polynomials},
        },
      book={
          series={London Math. Soc. Lecture Note Ser.},
          volume={352},
          publisher={Cambridge Univ. Press, Cambridge},
        },
      date={2008},
      pages={155--176},
      review={\MR{2428521}},
      doi={10.1017/CBO9780511721274.012},
    }

    \bib{Goldberg1958}{article}{
      author={Goldberg, M.},
      title={Tetrahedra equivalent to cubes by dissection},
      journal={Elem. Math.},
      volume={13},
      date={1958},
      pages={107--109},
      issn={0013-6018},
      review={\MR{105650}},
    }

    \bib{Goldberg1974new}{article}{
      author={Goldberg, M.},
      title={New rectifiable tetrahedra},
      journal={Elem. Math.},
      volume={29},
      date={1974},
      pages={85--89},
      issn={0013-6018},
      review={\MR{355828}},
    }

    \bib{Gordan1877}{article}{
    author={Gordan, P.},
    title={Ueber endliche Gruppen linearer Transformationen einer
    Ver\"{a}nderlichen},
    language={German},
    journal={Math. Ann.},
    volume={12},
    date={1877},
    number={1},
    pages={23--46},
    issn={0025-5831},
    review={\MR{1509926}},
    doi={10.1007/BF01442466},
    }

    \bib{Hadwiger1951}{article}{
      author={Hadwiger, H.},
      title={Hillsche Hypertetraeder},
      language={German},
      journal={Gaz. Mat. (Lisbon)},
      volume={12},
      date={1951},
      number={50},
      pages={47--48},
      issn={0373-2681},
      review={\MR{49585}},
    }

    \bib{Hill1895}{article}{
      author={Hill, M. J. M.},
      title={Determination of the Volumes of certain Species of Tetrahedra
          without employment of the Method of Limits},
      journal={Proc. Lond. Math. Soc.},
      volume={27},
      date={1895/96},
      pages={39--53},
      issn={0024-6115},
      review={\MR{1576480}},
      doi={10.1112/plms/s1-27.1.39},
    }

    \bib{Hindry1988}{article}{
      author={Hindry, Marc},
      title={Autour d'une conjecture de Serge Lang},
      language={French},
      journal={Invent. Math.},
      volume={94},
      date={1988},
      number={3},
      pages={575--603},
      issn={0020-9910},
      review={\MR{969244}},
      doi={10.1007/BF01394276},
    }

    \bib{Hindry2006}{article}{
    author={Hindry, Marc},
    title={La g\'{e}om\'{e}trie diophantienne, selon Serge Lang},
    language={French, with English summary},
    journal={Gaz. Math.},
    number={108},
    date={2006},
    pages={17--32},
    issn={0224-8999},
    review={\MR{2223493}},
    }

    \bib{Lang1983}{book}{
      author={Lang, Serge},
      title={Fundamentals of Diophantine geometry},
      publisher={Springer-Verlag, New York},
      date={1983},
      pages={xviii+370},
      isbn={0-387-90837-4},
      review={\MR{715605}},
      doi={10.1007/978-1-4757-1810-2},
    }

    % added by hand
    \bib{Lenhard1962}{article}{
      author={Lenhard, H.-C.},
      title={\"Uber f\"unf neue Tetraeder, die einem W\"urfel \"aquivalent sind},
      journal={Elem. Math.},
      volume={17},
      date={1962},
      pages={108--109},
    }

    \bib{Laurent1984}{article}{
    author={Laurent, Michel},
    title={\'{E}quations diophantiennes exponentielles},
    language={French},
    journal={Invent. Math.},
    volume={78},
    date={1984},
    number={2},
    pages={299--327},
    issn={0020-9910},
    review={\MR{767195}},
    doi={10.1007/BF01388597},
    }

    \bib{Leroux2012}{article}{
      author={Leroux, Louis},
      title={Computing the torsion points of a variety defined by lacunary
          polynomials},
      journal={Math. Comp.},
      volume={81},
      date={2012},
      number={279},
      pages={1587--1607},
      issn={0025-5718},
      review={\MR{2904592}},
      doi={10.1090/S0025-5718-2011-02548-2},
    }

    \bib{Maehara-Martini2018}{article}{
      author={Maehara, H.},
      author={Martini, H.},
      title={Simplices whose dihedral angles are all rational multiples of
          $\pi$, and related topics},
      journal={Acta Math. Hungar.},
      volume={155},
      date={2018},
      number={1},
      pages={25--35},
      issn={0236-5294},
      review={\MR{3813623}},
      doi={10.1007/s10474-018-0813-z},
    }

    \bib{Magma}{article}{
      author={Bosma, Wieb},
      author={Cannon, John},
      author={Playoust, Catherine},
      title={The Magma algebra system. I. The user language},
      note={Computational algebra and number theory (London, 1993).  Magma (version 2.24-4) is available at \url{http://magma.maths.usyd.edu.au/magma/}\phantom{i}}, % added second part
      journal={J. Symbolic Comput.},
      volume={24},
      date={1997},
      number={3-4},
      pages={235\ndash 265},
      issn={0747-7171},
      review={\MR{1484478}},
      label={Magma}, % added
    }

    \bib{Mann1965}{article}{
      author={Mann, Henry B.},
      title={On linear relations between roots of unity},
      journal={Mathematika},
      volume={12},
      date={1965},
      pages={107--117},
      issn={0025-5793},
      review={\MR{191892}},
      doi={10.1112/S0025579300005210},
    }

    \bib{McQuillan1995}{article}{
      author={McQuillan, Michael},
      title={Division points on semi-abelian varieties},
      journal={Invent. Math.},
      volume={120},
      date={1995},
      number={1},
      pages={143--159},
      issn={0020-9910},
      review={\MR{1323985}},
      doi={10.1007/BF01241125},
    }

    \bib{Milne2017}{book}{
      author={Milne, J. S.},
      title={Algebraic groups},
      series={Cambridge Studies in Advanced Mathematics},
      volume={170},
      note={The theory of group schemes of finite type over a field},
      publisher={Cambridge University Press, Cambridge},
      date={2017},
      pages={xvi+644},
      isbn={978-1-107-16748-3},
      review={\MR{3729270}},
      doi={10.1017/9781316711736},
    }

    \bib{Mohanty2003}{article}{
      author={Mohanty, Yana},
      title={The Regge symmetry is a scissors congruence in hyperbolic space},
      journal={Algebr. Geom. Topol.},
      volume={3},
      date={2003},
      pages={1--31},
      issn={1472-2747},
      review={\MR{1997312}},
      doi={10.2140/agt.2003.3.1},
    }

    \bib{Ponzano-Regge1968}{article}{
      author={Ponzano, G.},
      author={Regge, T.},
      title={Semiclassical limit of Racah coefficients},
      book={
          title={Spectroscopic and group theoretical methods in physics},
          editor={Bloch, F.},
          publisher={John Wiley and Sons},
          place={New York},
        },
      date={1968},
      pages={1--58},
    }

    \bib{Poonen-Rubinstein1998}{article}{
      author={Poonen, Bjorn},
      author={Rubinstein, Michael},
      title={The number of intersection points made by the diagonals of a
          regular polygon},
      journal={SIAM J. Discrete Math.},
      volume={11},
      date={1998},
      number={1},
      pages={135--156},
      issn={0895-4801},
      review={\MR{1612877}},
      doi={10.1137/S0895480195281246},
    }

    \bib{Redei1959}{article}{
    author={R\'{e}dei, Ladislaus},
    title={Nat\"{u}rliche Basen des Kreisteilungsk\"{o}rpers. I},
    language={German},
    journal={Abh. Math. Sem. Univ. Hamburg},
    volume={23},
    date={1959},
    pages={180--200},
    issn={0025-5858},
    review={\MR{103881}},
    doi={10.1007/BF02941034},
    }

    \bib{Redei1960}{article}{
    author={R\'{e}dei, Ladislaus},
    title={Nat\"{u}rliche Basen des Kreisteilungsk\"{o}rpers. II},
    language={German},
    journal={Abh. Math. Sem. Univ. Hamburg},
    volume={24},
    date={1960},
    pages={12--40},
    issn={0025-5858},
    review={\MR{117217}},
    doi={10.1007/BF02942017},
    }

    % added by hand
    \bib{Regge1959}{article}{
      author={Regge, T.},
      title={Symmetry properties of Racah's coefficients}, % [sic]
      journal={Nuovo Cim.},
      volume={11},
      date={1959},
      pages={116--117},
      doi={10.1007/BF02724914},
    }

    \bib{Roberts1999}{article}{
      author={Roberts, Justin},
      title={Classical $6j$-symbols and the tetrahedron},
      journal={Geom. Topol.},
      volume={3},
      date={1999},
      pages={21--66},
      issn={1465-3060},
      review={\MR{1673272}},
      doi={10.2140/gt.1999.3.21},
    }

    \bib{Rudenko-preprint}{misc}{
      author={Rudenko, Daniil},
      title={Rational elliptic surfaces and the trigonometry of tetrahedra},
      date={2019-08-03},
      note={Preprint, \texttt{arXiv:1908.01141v1}},
    }

    \bib{Ruppert1993}{article}{
      author={Ruppert, Wolfgang M.},
      title={Solving algebraic equations in roots of unity},
      journal={J. Reine Angew. Math.},
      volume={435},
      date={1993},
      pages={119--156},
      issn={0075-4102},
      review={\MR{1203913}},
      doi={10.1515/crll.1993.435.119},
    }

    \bib{SageMath}{manual}{
    author={Developers, The~Sage},
    title={{S}age{M}ath, the {S}age {M}athematics {S}oftware {S}ystem
    ({V}ersion 9.1)},
    date={2020},
    note={\url{https://www.sagemath.org}},
    label={Sage},
    }

    \bib{Sarnak-Adams1994}{article}{
      author={Sarnak, Peter},
      author={Adams, Scot},
      title={Betti numbers of congruence groups},
      note={With an appendix by Ze'ev Rudnick},
      journal={Israel J. Math.},
      volume={88},
      date={1994},
      number={1-3},
      pages={31--72},
      issn={0021-2172},
      review={\MR{1303490}},
      doi={10.1007/BF02937506},
    }

    \bib{Schoenberg1964}{article}{
      author={Schoenberg, I. J.},
      title={A note on the cyclotomic polynomial},
      journal={Mathematika},
      volume={11},
      date={1964},
      pages={131--136},
      issn={0025-5793},
      review={\MR{170887}},
      doi={10.1112/S0025579300004344},
    }

    \bib{Senechal1981}{article}{
      author={Senechal, Marjorie},
      title={Which tetrahedra fill space?},
      journal={Math. Mag.},
      volume={54},
      date={1981},
      number={5},
      pages={227--243},
      issn={0025-570X},
      review={\MR{644075}},
      doi={10.2307/2689983},
    }

    \bib{Singular}{manual}{
    author={Decker, Wolfram},
    author={Greuel, Gert-Martin},
    author={Pfister, Gerhard},
    author={Sch\"onemann, Hans},
    title={Singular {4-1-1} --- {A} computer algebra system for polynomial computations},
    date={2019},
    note={\url{http://www.singular.uni-kl.de}},
    label={Sing},
    }

    \bib{Sydler1956}{article}{
    author={Sydler, J.-P.},
    title={Sur les t\'{e}tra\`edres \'{e}quivalent \`a un cube},
    language={French},
    journal={Elem. Math.},
    volume={11},
    date={1956},
    pages={78--81},
    issn={0013-6018},
    review={\MR{79275}},
    }

    \bib{Sydler1965}{article}{
    author={Sydler, J.-P.},
    title={Conditions n\'{e}cessaires et suffisantes pour l'\'{e}quivalence des
    poly\`edres de l'espace euclidien \`a trois dimensions},
    language={French},
    journal={Comment. Math. Helv.},
    volume={40},
    date={1965},
    pages={43--80},
    issn={0010-2571},
    review={\MR{192407}},
    doi={10.1007/BF02564364},
    }

    \bib{Taylor-Woodward2005}{article}{
      author={Taylor, Yuka U.},
      author={Woodward, Christopher T.},
      title={$6j$ symbols for $U_q(\mathfrak{sl}_2)$ and non-Euclidean tetrahedra},
      journal={Selecta Math. (N.S.)},
      volume={11},
      date={2005},
      number={3-4},
      pages={539--571},
      issn={1022-1824},
      review={\MR{2215264}},
      doi={10.1007/s00029-005-0014-9},
    }

    \bib{Wlodarski1969}{article}{
    author={W\l odarski, \L .},
    title={On the equation $\cos \alpha _{1}+\cos \alpha
          _{2}+\cos \alpha _{3}+\cos \alpha _{4}=0$},
    journal={Ann. Univ. Sci. Budapest. E\"{o}tv\"{o}s Sect. Math.},
    volume={12},
    date={1969},
    pages={147--155},
    issn={0524-9007},
    review={\MR{256986}},
    }

  \end{biblist}
\end{bibdiv}

\end{document}